\title{Finite type as fundamental objects even non-single-valued and non-continuous}
\author{Zhengyu Yin}
\date{}
\newcommand{\mesh}{\operatorname{mesh}}
\newtheorem{thm}{Theorem}[section]
\newtheorem{lem}[thm]{Lemma}
\newtheorem{prop}[thm]{Proposition}
\newtheorem{ques}[thm]{Question}
\newtheorem{cor}[thm]{Corollary}
\theoremstyle{definition}
\newtheorem{de}[thm]{Definition}
\newtheorem*{theorem*}{Theorem}
\newtheorem*{cor*}{Corollary}
\newtheorem{exam}[thm]{Example}
\newcommand{\FOC}{\mathcal{FOC}}
\newtheorem{rem}[thm]{Remark}
\newtheorem{cla}[thm]{Claim}
\numberwithin{equation}{section}
\address{School of Mathematics and Statistics, Hainan University.}
\email{yzy199707@gmail.com}
\subjclass[2020]{37B65, 54C60, 54F17}
	\keywords{multivalued map, inverse limit, shadowing property, graph cover, Mittag-Leffler condition}
\begin{document}

\maketitle
\begin{abstract}
  In this paper, inspired by the elegant work of Good and Meddaugh \cite{GM} and the graph models for zero-dimensional systems developed by several authors, like Gambaudo and Martens \cite{GM06}, Shimomura \cite{Sh14}. We try to discover a connection among some objects, such as finite directed graph, shift of finite type and shadowing property by employing the Closed Graph Theorem for multivalued maps. 
From the perspective of structure theorems, we demonstrate that every closed relation (multivalued map) on a compact, totally disconnected space is represented as an inverse limit of finite directed graph homomorphisms satisfying the Mittag-Leffler condition. Moreover, from dichotomy-theorem point of view, we prove that an inverse limit of finite directed graph homomorphisms possesses the shadowing property if and only if its induced space of infinite graph walks (as a shift of finite type) satisfies the Mittag-Leffler condition. As an application, a question raised by Boro\'nski,  Bruin and Kucharski \cite{BBK} is also concerned.
Furthermore, we show that under a multivalued dynamical system, the resulting dynamical behaviors exhibit greater diversity and counterintuitively compared to those observed in single-valued continuous systems.
\end{abstract}

\section{Introduction}
Let $X$ and $Y$ be two topological spaces. A map $f$ from $X$ to $Y$ is a relation with the structural information: each element in $X$ is an initial vertex and each member in $Y$ is a terminal vertex linked by some edge in the graph $G_f$ of $f$. Naturally, $f$ carries the information of geometry (structure) and graphs. Particularly, if $f$ is a map from a compact Hausdorff space $X$ to itself, then $f$ also carries the dynamical properties like chaos, entropy, recurrence, stability etc. Usually, we focus on the autonomous action on a system, that is, one input corresponds to one output, while from the graph point of view, the single-valued continuous self map on a phase space is just a special case in the multivalued classes. For example, in the book \cite{Ak}, the concepts of recurrence with respect to a closed relation on a space are concerned, and the semiconjugacy between two closed relations (general closed graphs) resemble the graph homomorphisms betweeen two finite graphs is defined. Thus, it is natural to discuus the universal construction of inverse system consisting of such closed graphs. In this paper, we shall show that if the inverse system of closed graphs satisfies Mittag-Leffler condition, then the inverse limit also forms a closed graph on some space.

Multi-valued mappings possess richer phenomena than single-valued ones.  For example, for a continuous single-valued map $f:X\to X$, then $X$ is homeomorphic to its inverse limit which is defined by $\{(x_i)_{i\in \mathbb{N}}:f(x_i)=x_{i+1}\}$. Thus, they share the same topological properties, like connectness, topological dimension and so on. But for multivalued ones, there are examples showing that the connectivity between points decreased, thus, connected phase space may generate totally disconnected inverse limit, see e.g., \cite{IM}, \cite{In}. For dynamical systems, it is known that there is no \textit{expansive} homeomorphism on the closed unit $[0,1]$ \cite{Br62}, but $[0,1]$ adimts expansive upper-semisontinuous map \cite{Wi}.  Interestingly, an example in this paper shows that the action of an expansive multivalued map on a continuum can be viewed as a flipping coin motion.

For multivalued maps, the problem of finding a continuous selection for set-valued mappings has been a central theme in set-valued analysis over the past few decade, see e.g., \cite{Mic56} and a survey \cite{RS} and these results are applied to Game Theory \cite{YP}, Theory of Optimal Control \cite{Fi}, Approximation Theory \cite{De} and so on. Recently,
a growing body of literature has begun to consider the action of a system such that each starting point carries multiple choice. Thus, it is natural to consider the chaos behavior for such systems, such as Fixed points, e.g., \cite{Dai17}, topological entropy, e.g., \cite{AK, BEK, CMM, KT, RT, WZZ}, recurrence properties, e.g., \cite{AFL, HL}, invariant measures, e.g., \cite{Ak, MA}, and systemetic stability, like \cite{MMT}. Discussion with multivalued maps also helps us to understand the strucutre of a dynamical system in the classical sense, see e.g., \cite{DFLX, DX, Gl}. 

The motivation for us to consider the shadowing porperty for multivalued maps is original from \cite{MMT}, the authors in \cite{MMT} discussed the stability for set-valued maps and leave a question: whether the single-valued stability (in single-valued category) can be extended to multivalued stability (in multivalued or graph category), we try to get it but failed. But we notice that stability implies shadowing property. Due to the \textbf{closed graph theorem} for upper semicontinuous multivalued maps, there are two objects for a map $f:X\to 2^X$: First, the underlying phase space $X$ together with its time $1$ motion, that is, if we start with a point $x$ at some time $t$, then we know the possible motion at time $t+1$. Second, if we  consider the infinite time for $f$, then we get its orbit spaces with the constrained $f:X\to 2^X$. So from these two points of view, a property $\mathbb{P}$ given on $f:X\to 2^X$ actually induces property $\mathbb{P}_o$ on its orbit space or has counterexample like in \cite{IM}. Due to the limitations of our knowledge, we do not know that whether $f:X\to 2^X$ has shadowing property (cf. \cite{MMT, PR}) implying its infinite orbit space has (single-valued) shadowing property, that is, multivalued shadowing property generates single-valued shadowing property, so we discuss some situation in \cite{Yin}.

The shadowing property in symbolic systems naturally connects with the concept of shift of finite type (see e.g., \cite{Br, LM, PW1}). After reading these literature, we catch an interesting sign which links to the  multivalued maps: let $V$ be a finite set, and $E\subset V\times V$ a relation on $V$. Then $G=(V,E)$ generates a finite (direced) graph, in particular, if we endow $V$ with discrete topology, then the infinite walk produced by $G$ becomes a symbolic system, indeed, a shift of finite type. Now, if we define a map $f:V\to 2^V$ by $f(u)=\{v:(u,v)\in E\}$, we call it \textit{multivalued map of finite type}, then $f$ is a multivalued map on $V$ and its obrit space is a shift of finite type, thus, having shadowing property. In \cite{GM}, Good and Meddaugh prove that a single-valued zero-dimensional system has shadowing property if and only if it is conjugate to an inverse limit of shift of finite type which satisfies Mittage-Leffler condition. Thank to their elegent work, which leads us to consider the shadowing property for multivalued version. By reading their paper \cite{GM}, Good and Meddaugh actually state a \textbf{dichotomy-theorem}, that is, a single-valued zero-dimensional dynamical system is homeomorphic to an inverse limit of shift of finite type, but only the systems with shadowing property satisfying Mittag-Leffler condition. However, we find this statement fail for multivalued maps. Precisely, it is mentioned above that a multivalued map on a finite phase space automatically possess shadowing property, but a beautiful result \cite[Theorem 3.9]{Sh14} proved by T. Shimomura shows that every single-valued zero-dimensional dynamical system is conjugate to an inverse limit of multivalued maps of finite type, where the bonding maps between multivalued maps of finite type are surjective, thus, satisfying Mittag-Leffler condition. However, not every zero-dimensional dynamical system has shadowing property, so this implies that a multivalued system with shadowing property possessing deeper structure.

To deal with the multivalued maps, a problem is that how we can record a single-orbit efficiently, in \cite{GM06}, Gambaudo and Martens characterize single-valued minimal zero-dimensional system $X$ with some specific finite directed graphs, using graph theory, derive a \textbf{structure theorem} by approximating each point $x\in X$ with inverse limit of vertex sets it belongs to, which explicitly describes the geometric structure of the system. The graph representation method of a zero-dimensional system is also applied to describe a \textit{generic} continuous (homeomporphic) map over Cantor systems,  (cf. \cite{AGW}, \cite{BD}). Later,  T. Shimomura \cite{Sh14, Sh16, Sh20} further develope the graph representation method for zero-dimensional single-valued systems, and name the inverse limit with specific conditions \textbf{graph cover}. From the point of structure, the mutivalued maps and the graph with veteices and edges are in the same category. Thus, the closed graph theorem leads us to apply graph theory to study the structure or dynamical properties  for multivalued maps. In historical sequence, we consider the expansive multivalued maps with shadowing property (cf. \cite{PW1}) and obtain the first result, the relation between multivalued shadowing and shift of finite type. Meanwhile, the concept of \textit{generator} for expansive systems in \cite{KR} is generalized to multivalued version
\begin{theorem*}
Let $(X, f)$ be an expansive compact totally disconnected multivalued system. If $f$ has the shadowing property, then its orbit system is conjugate to a subshift of finite type.    
\end{theorem*}
Further, to establish the structure of a multivalued system with shadowing property on a zero-dimensional space, we apply the method of T. Shimomura \cite{Sh14,Sh16, Sh16} to a general closed graph on a general zero-dimensional space, that is, 
\begin{theorem*}
    $(X,f)$ is a compact totally disconnected Hausdorff multivalued system if and only if it is the inverse limit of an inverse system of multivalued systems of finite type (or graphs) satisfying the Mittag-Leffler condition. In particular, if the inverse system is countable and $+$directional, then the inverse limit is conjugate a single-valued continuous system.
\end{theorem*}

When we go through the literature, we find in \cite{BBK}, the authors asked a question:
\begin{ques}[\rm{\cite[Questions]{BBK}}]\label{ques1.1}
    Whether there exists a characterization of shadowing in terms of graph covers.
\end{ques}

In the aforementioned theorem, each constituent element of the inverse limit is a simple finite directed graph, which is a fundamental object in graph theory, as mentioned before, the space consists of the infinite walk of finite directed graph is a shift of finite type, so the method used in \cite{GM} can be transferred into multivalued version. Finally, by combining the thought appear in \cite{GM06, Sh14, GM}, we prove that 
\begin{theorem*}
    Let $f:X\to 2^X$ be a multivalued map on a compact totally disconnected Hausdorff space $X$. Then $f$ has shadowing property if and only if $(X,f)$ is conjugate to an inverse limit of multivalued of finite type such that the inverse system of their induced orbit systems satisfy Mittag-Leffler condition. 
\end{theorem*}

As an application and corollary, we answer the Question \ref{ques1.1}.

\begin{cor*}
    Let $\mathcal{G} = \{G_1 \xleftarrow{\varphi_1} G_2 \xleftarrow{\varphi_2} \cdots\}$ be a graph cover. The system has the shadowing property if and only if the inverse system of the orbit spaces (infinite walk) of $G_i=(V_i, E_i)$ ($i\in \mathbb{N}$) satisfies the Mittag-Leffler condition.
\end{cor*}

In the rest of the paper, we also show that shadowing property in mutivalued category is 'big', that is, it is dense in the class of upper semicontinuous maps on zero-dimensional space.

The paper is organized as follows. In section \ref{se2}, we develop the inverse limit of multivalued systems such that not only the multivalued structure is preserved but also the dynamical action is preserved. In section \ref{se3}, we introduce a useful tool extracted from \cite{GM06}, which help us to visualize the trajectory of a multivalued system. In section \ref{se4}, the relationship between multivalued shadowing property  and shift of finite type is preliminarily explored, and some interesting examples are also contained in this part. In section \ref{se5}, the graph cover method is applied to construct the structure of multivalued system on a zero-dimensional space. In section \ref{se6}, the structure of multivalued shadowing property  on zero-dimensional space is understood. In section \ref{se7}, we consider how many multivalued maps possess shadowing property in the class of all upper semicontinuous maps. In section \ref{se8}, we consider the shadowing property in general spaces and the openness is concerned.

\newpage
\section{Preliminary}\label{se2}
\subsection{Multivalued maps, orbits and graph walks}

\subsubsection{Multivalued maps}
In general, we denote by $\omega$ the set of natural numbers including $0$.
Let $X$ be a compact Hausdorff space (need not be metrizable). We denote by $2^X$ the class of all non-empty closed subsets of $X$ with the Vietoris topology. If $d$ is a metric on $X$, then $d$ induces a new metric named Hausdorff metric $d_H$ on $2^X$. It is known that $d_H$ is compatible with the Vietoris topology, and $d_H$ is given by 
\[
d_H(A,B) = \max\left\{\sup_{a \in A} d(a,B), \sup_{b \in B} d(A,b)\right\}\text{ for all }A,B\in 2^X.
\]

Recall that a map $f$ from $X$ to $2^X$ is said to be \textbf{upper semicontinuous} if for each $x\in X$ and an open neighborhood $U$ of $f(x)$, there is an open neighborhood $V$ of $x$ such that $f(y)\subset U$ for all $y\in V$; $f$ is said to be \textbf{lower semicontinuous} if for each $x\in X$ and an open set $U$ intersecting with $f(x)$, i.e., $f(x)\cap U\ne\emptyset$, there is an open neighborhood $V$ of $x$ such that $f(y)\cap U\ne\emptyset$ for all $y\in V$. Particularly, if $f$ is both upper and lower semicontinuous, then $f$ is continuous relative to the topologies on $X$ and $2^X$, respectively. 

Let $f:X\to 2^X$. Given $A\subset X$, we define the \textbf{inverse image} of $A$ under $f$ by 
\[f^{-1}(A) = \{ x \in X : f(x) \cap A \neq \emptyset \},\]
and write
\[f(A)=\{y\in f(x):x\in A\}=\bigcup_{a\in A}f(a).\]
In this paper, we call $f:X\to 2^X$ a \textbf{multivalued map} if $f$ is upper semicontinuous.

\begin{prop}\rm{(Folklore)}\label{prop2.1}
    Let $X$ be a compact Hausdorff space and $f:X\to 2^X$ a multivalued map. Then the following are equivalent:
    \begin{enumerate}
        \item $f$ is upper semicontinuous.
        \item For every closed set $A$ of $X$, the set $f^{-1}(A)$ is closed in $X$.
        \item \textbf{(Closed Graph Theorem)}. The graph
        \[
        G_f= \{x\times f(x) : x \in X\}
        \]
        is closed in $X \times X$.
    \end{enumerate}
    Besides, if $A$ is a compact set of $X$, then $f(A)$ is also compact.
\end{prop}

\begin{rem}
\begin{enumerate}
    \item If $f$ is a single-valued map, then $G_f$ is closed if and only if $f$ is continuous.
    \item By virtue of the closed graph theorem, within the category of graphs, we abandon the distinction between single-valued and multi-valued functions. Therefore, we uniformly use the symbol $f$ to denote a multivalued mapping.
\end{enumerate}
   
\end{rem}

\subsubsection{Orbit spaces and graph walks}
Let $X$ be a compact Hausdorff space. $\prod_{i\in\omega}X$ denotes the infinite product space of $X$ with the product topology. We denote $\{x_i\}_{i\in\omega}$ (or $\{x_i\}$ for short) as an element in $\prod_{i\in\omega}X$, and define
\[\Pi_k:\prod_{i\in\omega}X\to X,\,\{x_i\}\mapsto x_k,\quad\text{for all }\{x_i\}\in \prod_{i\in\omega}X\] as the $k$-th projection map.
 Throughout the paper, bold symbols such as $\boldsymbol{x},\boldsymbol{w},\boldsymbol{y}$ will sometimes be used to denote a point in the product space.
 If $X$ is metrizable with metric $d$, we define
\[\rho(\boldsymbol{x},\boldsymbol{y})=\sum_{i\in\omega}\frac{d(x_i,y_i)}{2^{i+1}},\quad\text{for all }\boldsymbol{x}=\{x_i\},\boldsymbol{y}=\{y_i\}\in \prod_{i\in \omega}X.\]
Then $\rho$ is a compatible metric on $\prod_{i\in \omega}X$.

Let $X$ be a compact Hausdorff space and $f:X\to 2^X$ be a multivalued map. We consider the action of a point $x\in X$ under iteration of $f$, viewing the pair $(X,f)$ as a dynamical system. The \textbf{orbit space} of $(X,f)$ is defined as
\[
{\rm{Orb}}_f(X) = \left\{ \{x_i\} \in \prod_{i\in \omega} X : x_{i+1} \in f(x_i) \text{ for all } i \in \omega \right\}.
\]
For clarity, an element of ${\rm{Orb}}_f(X)$ will be denoted by $(x_i)_{i\in\omega}$, or simply $(x_i)$.

The \textbf{(left) shift} transformation on ${\rm{Orb}}_f(X)$ is given by
\[
\sigma_f(\{x_i\})=\{y_i\}, \quad \text{where } y_i=x_{i+1} \text{ for all } i\in \omega.
\]
By the closed graph theorem for upper semicontinuous maps, ${\rm{Orb}}_f(X)$ is a closed subset of $\prod_{i\in\omega}X$, and consequently $({\rm{Orb}}_f(X),\sigma_f)$ constitutes a topological dynamical system in the classical sense.

\begin{rem}
    In particular, if $X$ is a finite set, ${\rm{Orb}}_f(X)$ is repesented as a space of infinite \textbf{graph walks}.
\end{rem}

\subsubsection{Semiconjugacy}
Given two single-valued dynamical systems $(X,f_1)$ and $(Y,f_2)$, recall that a continuous map $\pi$ between $X$ and $Y$ is said to \textbf{preserve dynamical action} if $\pi\circ f_1(x)=f_2\circ\pi(x)$ holds for all $x\in X$. Moreover, if $\pi$ is surjective, $(Y,f_2)$ is called a \textbf{factor} of $(X,f_1)$. Similarly, given two multivalued systems $(X,f)$ and $(Y,g)$, we consider the dynamical action preserving map, which is defined as follows. 

\begin{de}\label{def2.10}\rm{(\cite[cf. Definition 1.15]{Ak}).}
    Let $(X, f)$ and $(Y, g)$ be two multivalued systems and $\pi:X\to Y$ be a continuous map. We say that
    \begin{enumerate}
        \item  $\pi$ is a \textbf{semiconjugacy} from $(X,f)$ to $(Y,g)$ if $\pi(f(x))\subset g(\pi(x))$ for all $x\in X$.

        \item $\pi$ is an \textbf{exact semiconjugacy} from $(X,f)$ to $(Y,g)$ if $\pi(f(x))= g(\pi(x))$ for all $x\in X$.

        \item $\pi$ is a \textbf{supsemiconjugacy} from $(X,f)$ to $(Y,g)$ if $\pi(f(x))\supset g(\pi(x))$ for all $x\in X$.
    \end{enumerate}
    Particularly, if $\pi$ is also surjective such that conditions 1, 2 or 3 hold, we call $(Y,g)$ a \textbf{subfactor} (resp. \textbf{factor}, \textbf{supfactor}) of $(X,f)$.
\end{de}
The semiconjugacy defined in Definition \ref{def2.10} differs from the notion of semi-conjugacy introduced in \cite{KT}.
\begin{rem}
  To emphasize and simplify, if $\pi$ is a semiconjugacy from $(X,f)$ to $(Y,g)$ (not necessarily surjective), we say that $\pi$ satisfies \textbf{Condition \eqref{eq1.11}}, namely  

\begin{equation}\label{eq1.11}
    \pi(f(x)) \subset g(\pi(x)) \quad \text{for all } x \in X. 
\end{equation}

Moreover, it is interesting to observe that if the multivalued maps $f$ and $g$ are viewed as general graphs on topological spaces, then a semiconjugacy $\pi$ between $f$ and $g$ corresponds precisely to a \textbf{graph homomorphism} between the two graphs. Naturally, such a $\pi$ induces a \textbf{functorial} construction $\pi^*$ given by  

\[
\pi^* \colon \mathrm{Orb}_f(X) \to \mathrm{Orb}_g(Y), \quad \pi^*((x_i)) = (\pi(x_i)),
\]
which furthermore satisfies  
\[
\pi^* \circ \sigma_f((x_i)) = \sigma_g \circ \pi^*((x_i)) \quad \text{for all } (x_i) \in \mathrm{Orb}_f(X).
\]
\end{rem}

\begin{de}
    The system $(X, f)$ is said to be \textbf{conjugate} to $(Y, g)$ if there exists a homeomorphism $\pi: X \to Y$ such that
    \[
        f(x) = \pi^{-1}\big(g(\pi(x))\big) \quad \text{for all } x \in X.
    \]
\end{de}

If $f$ is a continuous single-valued map, then its orbit system $(\mathrm{Orb}_f(X),\sigma_f)$ is conjugate to $(X,f)$ via the 0-th projection map $\Pi_0$.

\subsection{Inverse systems and Mittag-Leffler condition}

\begin{de}
    Let $(\Lambda, \leq)$ be a directed set. An \textbf{inverse system} $(g^\eta_\lambda, X_\lambda)$ over $\Lambda$ consists of a collection of compact Hausdorff spaces $\{X_\lambda \mid \lambda \in \Lambda\}$, together with, for each pair $\lambda \leq \eta$, a continuous map $g^\eta_\lambda : X_\eta \to X_\lambda$ such that:
    \begin{itemize}
        \item $g^\lambda_\lambda = \mathrm{id}_{X_\lambda}$ for every $\lambda \in \Lambda$;
        \item $g^\nu_\lambda = g^\eta_\lambda \circ g^\nu_\eta$ whenever $\lambda \leq \eta \leq \nu$.
    \end{itemize}
    The \textbf{inverse limit} of the inverse system is defined as
    \[
        \varprojlim \{ g^\eta_\lambda, X_\lambda \} 
        = \bigl\{ (x_\lambda) \in \prod_{\lambda \in \Lambda} X_\lambda \;\bigm|\; x_\lambda = g^\eta_\lambda(x_\eta) \text{ for all } \lambda \leq \eta \bigr\},
    \]
    endowed with the topology inherited from the product topology on $\prod_{\lambda \in \Lambda} X_\lambda$.
\end{de}

\begin{de}
    An inverse system $(g^\eta_\lambda, X_\lambda)$ is said to satisfy the \textbf{Mittag-Leffler condition} (sometimes, ML for short) if for every $\lambda \in \Lambda$, there exists $\gamma \geq \lambda$ such that for each $\eta \geq \gamma$, 
    \[
    g^\gamma_\lambda(X_\gamma) = g^\eta_\lambda(X_\eta).
    \]
\end{de}

\begin{de}
    Let $(g^\eta_\lambda,X_\lambda)$ be an inverse system. For each $\lambda\in\Lambda$, the \textbf{stable image} of $X_\lambda$ is defined as
    \[
    X_\lambda' = \bigcap_{\lambda \leq \eta} g^\eta_\lambda(X_\eta).
    \]
\end{de}

Suppose the inverse system $(X_\lambda, g^\eta_\lambda)$ satisfies the Mittag-Leffler condition. Fix $\lambda \in \Lambda$ and let $\eta \geq \lambda$ be an index witnessing this condition. Define the stable image by $X'_\lambda = g^\eta_\lambda(X_\eta)$. It follows that $X'_\lambda = g^\gamma_\lambda(X_\gamma)$ for all $\gamma \geq \eta$. Moreover, for any $\lambda_1 \geq \lambda$, the restriction map $g^{\lambda_1}_\lambda|_{X'_{\lambda_1}}$ is surjective onto $X'_\lambda$. Consequently, the system of stable images induces the same inverse limit as the original one.

Hence, we have the following useful property $(*)$ (see \cite{GM}):
If an inverse system $(X_\lambda, g^\eta_\lambda)$ satisfies the Mittag-Leffler condition, and $\gamma$ witnesses this condition for $\mu$, then for any $x \in g^\gamma_\mu(X_\gamma) \subseteq X_\mu$, we have
\[
    \pi^{-1}_\mu(x) \cap \varprojlim \{ g^\eta_\lambda, X_\lambda \} \neq \emptyset.
\]

\subsection{Inverse limit of multivalued systems}

\begin{de}\label{def1.4}
    Let $\{(X_\lambda, f_\lambda)\}_{\lambda \in \Lambda}$ be a collection of multivalued systems indexed by a directed set $(\Lambda, \leq)$. Suppose that for each pair $\lambda \leq \eta$ in $\Lambda$, there exists a continuous map $g^\eta_\lambda : X_\eta \to X_\lambda$. The collection $\mathcal{S} = \{ g^\eta_\lambda, (X_\lambda, f_\lambda) \}$ is called an \textbf{inverse system} if the following conditions hold:
    \begin{itemize}
        \item for any $\lambda \leq \eta$, the map $g^\eta_\lambda : (X_\eta, f_\eta) \to (X_\lambda, f_\lambda)$ satisfies Condition \eqref{eq1.11};
        \item the inverse system $(g_\lambda^\eta, X_\lambda)$ satisfies the Mittag-Leffler condition;
        \item for any $\lambda \leq \eta \leq \nu$ and any $x \in X_\nu$, $g^\nu_\lambda(x) = g^\eta_\lambda \circ g^\nu_\eta(x)$.
    \end{itemize}
    The \textbf{inverse limit multivalued system} of $\mathcal{S}$ is defined as the pair $\left(\varprojlim X_\lambda, \varprojlim f_\lambda\right)$, where the space $\varprojlim X_\lambda = \varprojlim \{ g^\eta_\lambda, X_\lambda \}$ is the standard inverse limit space, and the map $\varprojlim f_\lambda$ is defined by
    \[
        \varprojlim f_\lambda\left((x_\lambda)\right) = \left\{ (y_\lambda) \in \varprojlim X_\lambda : (x_\lambda, y_\lambda) \in G_{f_\lambda} \text{ for all } \lambda \in \Lambda \right\}.
    \]
\end{de}

\begin{rem}
    The Mittag-Leffler condition is not required if every $f_\lambda$ is a single-valued map on $X_\lambda$.
\end{rem}

\begin{thm}\label{lem1.12}
    The pair $\left( \varprojlim \{ g^\eta_\lambda, X_\lambda \}, \varprojlim f_\lambda \right)$ defined in Definition \ref{def1.4} is an upper semicontinuous multivalued system.
\end{thm}

\begin{proof}
    We divide the proof into three steps:
    \begin{enumerate}
        \item We prove that $\varprojlim f_\lambda((x_\lambda))$ is non-empty for each $(x_\lambda) \in \varprojlim \{ g^\eta_\lambda, X_\lambda \}$.
        \item For each $(x_\lambda) \in \varprojlim \{ g^\eta_\lambda, X_\lambda \}$, we verify that $\varprojlim f_\lambda((x_\lambda))$ is a closed set in $\prod_{\lambda\in \Lambda}X_\lambda$.
        \item We show that the graph of $\varprojlim f_\lambda$ is closed in $\left(\varprojlim \{ g^\eta_\lambda, X_\lambda \}\right)^2$.
    \end{enumerate}

    \textbf{Step 1.} If the inverse limit $\varprojlim \{ g^\eta_\lambda, X_\lambda \}$ is empty, the statement holds trivially. Let $(x_\lambda) \in \varprojlim \{ g^\eta_\lambda, X_\lambda \}$. Since $( g^\eta_\lambda, X_\lambda )$ satisfies the Mittag-Leffler condition, for each $\lambda \in \Lambda$, there exists some $\gamma \geq \lambda$ such that for any $\eta \geq \gamma$, we have
    \[
        g^\gamma_\lambda(X_\gamma) = g^\eta_\lambda(X_\eta).
    \]
    Fix $\lambda$, let $\gamma$ witness the above condition, and take $x_\gamma' \in f_\gamma(x_\gamma)$. Since $g^\gamma_\lambda: (X_\gamma, f_\gamma) \to (X_\lambda, f_\lambda)$ satisfies Condition \eqref{eq1.11}, we have $g^\gamma_\lambda(x_\gamma') = x_\lambda' \in f_\lambda(x_\lambda)$, and $x'_\lambda$ is in the stable image $X_\lambda'$. Therefore, there exists a point $\boldsymbol{w}_\lambda = (z_\mu)_{\mu \in \Lambda} \in \varprojlim \{ g^\eta_\lambda, X_\lambda \}$ such that 
    \[
        \pi_\lambda(\boldsymbol{w}_\lambda) = x_\lambda'.
    \]
    Consider $\nu \in \Lambda$ with $\lambda \leq \nu$. Following a similar discussion, there is an element $\boldsymbol{w}_\nu \in \varprojlim \{ g^\eta_\lambda, X_\lambda \}$ such that 
    \[
        \pi_\nu(\boldsymbol{w}_\nu) \in f_\nu(x_\nu) \quad \text{and} \quad \pi_\lambda(\boldsymbol{w}_\nu) = g^\nu_\lambda(\pi_\nu(\boldsymbol{w}_\nu)) \in f_\lambda(x_\lambda)
    \]
    (as $g^\nu_\lambda$ satisfies Condition \eqref{eq1.11}).
    
    Thus, by repeating the process over $\Lambda$, we can find a net $\{\boldsymbol{w}_\lambda : \lambda \in \Lambda\}$ in $\varprojlim \{ g^\eta_\lambda, X_\lambda \}$ such that:
    \[
        \pi_\nu(\boldsymbol{w}_\nu) \in f_\nu(x_\nu) \text{ for each } \nu \in \Lambda,
    \]
    and if $\nu \leq \eta$, since $g^\eta_\nu$ satisfies Condition \eqref{eq1.11}, then
    \[
        \pi_\nu(\boldsymbol{w}_\eta) \in f_\nu(x_\nu) \quad (\text{since } \pi_\nu(\boldsymbol{w}_\eta) = g^\eta_\nu(\pi_\eta(\boldsymbol{w}_\eta)) \subset g^\eta_\nu(f_\eta(x_\eta)) \subset f_\nu(x_\nu)).
    \]
    Since $\varprojlim \{ g^\eta_\lambda, X_\lambda \}$ is compact, the net $\{\boldsymbol{w}_\lambda : \lambda \in \Lambda\}$ has a cluster point $\boldsymbol{y} = (y_\lambda)$. From the product topology, we know that $\{\pi_\mu(\boldsymbol{w}_\lambda) : \lambda \in \Lambda\}$ is frequently in any neighborhood of $y_\mu$. From the discussion above, $\{\pi_\mu(\boldsymbol{w}_\lambda) : \lambda \in \Lambda\}$ is eventually in $f_\mu(x_\mu)$ for each $\mu \in \Lambda$. Thus, since $f_\mu(x_\mu)$ is closed, $y_\mu \in f_\mu(x_\mu)$ for each $\mu \in \Lambda$. Hence, $\boldsymbol{y} \in \prod_{\lambda \in \Lambda} f_\lambda(x_\lambda)$. Therefore, $\varprojlim f_\lambda((x_\lambda))$ is non-empty for $(x_\lambda) \in \varprojlim \{ g^\eta_\lambda, X_\lambda \}$.

    \textbf{Step 2.} Let $(x_\lambda) \in \varprojlim \{ g^\eta_\lambda, X_\lambda \}$. Suppose $\{\boldsymbol{w}_\xi\}$ is a net in $\varprojlim f_\lambda((x_\lambda))$ converging to some point $\boldsymbol{w} \in \varprojlim \{ g^\eta_\lambda, X_\lambda \}$. Then for each $\lambda \in \Lambda$, $\pi_\lambda(\boldsymbol{w}_\xi)$ converges to $\pi_\lambda(\boldsymbol{w})$. As $f_\lambda$ is upper semicontinuous (and has closed values), $\pi_\lambda(\boldsymbol{w}) \in f_\lambda(x_\lambda)$. Therefore, $\boldsymbol{w} \in \varprojlim f_\lambda((x_\lambda))$, implying the set is closed.

    \textbf{Step 3.} Suppose a net $(\boldsymbol{x}_\alpha, \boldsymbol{y}_\alpha)$ in the graph $G_{\varprojlim f_\lambda}$ converges to some point $(\boldsymbol{x}, \boldsymbol{y})$. Since $\varprojlim \{ g^\eta_\lambda, X_\lambda \}$ is closed in the product space, $(\boldsymbol{x}, \boldsymbol{y}) \in \left(\varprojlim \{ g^\eta_\lambda, X_\lambda \}\right)^2$. For each $\mu \in \Lambda$, $f_\mu$ is an upper semicontinuous map, so $(\pi_\mu(\boldsymbol{x}_\alpha), \pi_\mu(\boldsymbol{y}_\alpha))$ converges to $(\pi_\mu(\boldsymbol{x}), \pi_\mu(\boldsymbol{y})) \in G_{f_\mu}$. Therefore, $\pi_\mu(\boldsymbol{y}) \in f_\mu(\pi_\mu(\boldsymbol{x}))$ for all $\mu$, which implies $(\boldsymbol{x}, \boldsymbol{y}) \in G_{\varprojlim f_\lambda}$. Hence, $\varprojlim f_\lambda$ is upper semicontinuous by the Closed Graph Theorem.
\end{proof}

\section{Expansiveness}\label{se3}

In this section, we present an equivalent characterization of expansiveness for multivalued systems. Furthermore, we use open covers to describe the itineraries of points.

\subsection{Tracking orbits of multivalued maps}\label{se:3.1}

Let $(X,f)$ be a multivalued system and let $x_0 \in X$. Consider the iteration of $x_0$ under $f$, the point $x_0$ may produce different orbits. A natural question arises: how can we observe or record one specific orbit? We find that open covers of $X$ provide a method to address this, as follows.

First, we denote by $\mathcal{FOC}(X)$ the class of all finite open covers of $X$. Let $\mathcal{U} \in \mathcal{FOC}(X)$. We refer to it as $\mathcal{U}$-\textbf{pattern-space}. 

Given an initial point $x_0 \in X$, suppose $x_0 \in U_0$ at time $0$, and $x_0$ moves to some $x_1 \in U_1 \in \mathcal{U}$ at time $1$ (denoted as $x_0 \overset{f}{\curvearrowright} x_1$). Then we have:
\[
    x_0 \in U_0, \quad f(x_0) \cap U_1 \neq \emptyset \implies x_0 \in U_0 \cap f^{-1}(U_1).
\]
Continuing to observe $x_1$, if $x_1$ moves to some $x_2 \in U_2 \in \mathcal{U}$ at time $2$ (i.e., $x_0 \overset{f}{\curvearrowright} x_1 \overset{f}{\curvearrowright} x_2$), then:
\[
    x_1 \in U_1, \quad f(x_1) \cap U_2 \neq \emptyset \implies x_1 \in U_1 \cap f^{-1}(U_2),
\]
plus
\[
    x_0 \in U_0 \cap f^{-1}(U_1 \cap f^{-1}(U_2)).
\]

Repeating this process indefinitely, we obtain a method to track the orbit of $x_0$. Specifically, if $(x_i)_{i\geq 0}$ is an $f$-orbit of $x_0$ such that $x_i \in U_i \in \mathcal{U}$ for each $i$, then for every $i \geq 1$, we have
\[
    x_0 \in U_0 \cap f^{-1}(U_1 \cap f^{-1}(U_2 \cap \cdots \cap f^{-1}(U_i))).
\]
More generally, for any $j \geq i$,
\[
    x_i \in U_i \cap f^{-1}(U_{i+1} \cap f^{-1}(U_{i+2} \cap \cdots \cap f^{-1}(U_j))).
\]

\begin{figure}[htbp]\label{F1}
    \centering 
    
    \begin{tikzpicture}[
        orbit_set/.style={draw, ellipse, minimum width=2.2cm, minimum height=1.6cm, thick},
        sub_region/.style={draw, ellipse, dashed, minimum width=1.2cm, minimum height=0.8cm, pattern=north east lines, pattern color=gray!60},
        point/.style={circle, fill=black, inner sep=1.5pt},
        map_arrow/.style={->, >=Stealth, thick, bend left=35}
    ]

        \node[orbit_set, label=below:$\mathcal{U} \ni U_0$] (U0) at (0,0) {};
        \node[orbit_set, label=below:$\mathcal{U} \ni U_1$, right=2.5cm of U0] (U1) {};
        \node[orbit_set, label=below:$\mathcal{U} \ni U_2$, right=2.5cm of U1] (U2) {};

        \node[sub_region] (invU1) at (U0.center) {};
        \node[font=\tiny, above right] at (invU1.north east) {$U_0 \cap f^{-1}(U_1)$};

        \node[sub_region] (invU2) at (U1.center) {};
        \node[font=\tiny, above right] at (invU2.north east) {$U_1 \cap f^{-1}(U_2)$};

        \node[point, label=below left:$x_0$] (x0) at (invU1.center) {};
        \node[point, label=below left:$x_1$] (x1) at (invU2.center) {};
        \node[point, label=below left:$x_2$] (x2) at (U2.center) {};

        \draw[map_arrow] (x0) to node[above, font=\small] {$f$} (x1);
        \draw[map_arrow] (x1) to node[above, font=\small] {$f$} (x2);

        \node[font=\small\bfseries, above=1cm of U0] {Time $0$};
        \node[font=\small\bfseries, above=1cm of U1] {Time $1$};
        \node[font=\small\bfseries, above=1cm of U2] {Time $2$};

    \end{tikzpicture}
    
    \caption{Record orbit} 
    \label{fig:orbit_schematic} 
\end{figure}
Based on the intuitive tracking procedure described above, we now formalize these concepts into mathematical definitions. We introduce the notion of the \textit{orbital discriminant}, that is,

\begin{de}[Orbital Discriminant]
    Let $(X, f)$ be a set-valued system and let $\mathcal{U} \in \FOC(X)$ be a finite open cover.
    \begin{enumerate}
        \item[(1)]  For a finite sequence of open sets $\mathbf{U}_n = \{U_0, U_1, \dots, U_n\} \subseteq \mathcal{U}$, we define the \textbf{$n$-tuple orbital discriminant} as:
        \[
            [U_0, \dots, U_n] := U_0 \cap f^{-1}\left( U_1 \cap f^{-1}\left( U_2 \cap \cdots \cap f^{-1}(U_n) \right) \right).
        \]
        This set consists of all points $x_0 \in U_0$ for which there exists a partial orbit $x_0 \overset{f}{\curvearrowright} \cdots \overset{f}{\curvearrowright} x_n$ such that $x_i \in U_i$ for all $0 \le i \le n$.
        
        \item[(2)]  For an infinite sequence $\mathbf{U} = \{U_i\}_{i \geq 0} \subseteq \mathcal{U}$, we define the \textbf{orbital discriminant} by:
        \[
            [\mathbf{U}] := \bigcap_{n=0}^{\infty} [U_0, \dots, U_n] = U_0 \cap f^{-1}\left( U_1 \cap f^{-1}\left( U_2 \cap \cdots \right) \right).
        \]
        This denotes the set of points in $U_0$ whose forward orbit can be realized within the sequence $\{U_i\}_{i \geq 0}$. Sometimes, we also write $[U_i]$.
    \end{enumerate}
\end{de}

\begin{de}[$\mathcal{U}$-orbit-pattern]
    Let $(X, f)$ be a multivalued system and $\mathcal{U} \in \FOC(X)$. An infinite sequence $\mathbf{U} = \{U_i\}_{i \geq 0}$ of elements in $\mathcal{U}$ is called a \textbf{$\mathcal{U}$-orbit-pattern} if its orbital discriminant is non-empty, i.e.,
    \[
        [\mathbf{U}] \neq \emptyset.
    \]
    Geometrically, this implies that there exists at least one actual orbit $(x_i)_{i \geq 0}$ of the system such that $x_i \in U_i$ for all $i \geq 0$.
\end{de}

\begin{rem}\label{rem3.3}
\begin{enumerate}
    \item The operator $[\cdot]$ provides a method to characterize the orbit $(x_i)_{i \in \omega}$ through the lens of the open cover $\mathcal{U}$. Specifically, for any infinite sequence $\mathbf{U} = \{U_i\}_{i \in \omega} \subseteq \mathcal{U}$, the orbital discriminant is the limit of the finite tuple discriminants:
    \[
        [\mathbf{U}] = \bigcap_{n \in \omega} [U_0, \dots, U_n].
    \]
    \item The method above is extracted from \cite{GM06}, in which the finite orbit of a point in some minimal Cantor system is observed via clopen partitions, and the structure of such minimal Cantor system is described as the inverse limit of some finite directed graphs.
\end{enumerate}
\end{rem}

\begin{rem}
    Consider that $f$ is a single-valued map. As the distributivity of the preimage over intersections (i.e., $f^{-1}(A \cap B) = f^{-1}(A) \cap f^{-1}(B)$), the nested structure of the orbital discriminant is presented as follows:
    \[
        [U_0, \dots, U_n] = U_0 \cap f^{-1}(U_1) \cap f^{-2}(U_2) \cap \cdots \cap f^{-n}(U_n) = \bigcap_{k=0}^n f^{-k}(U_k).
    \]
    Thus, our definition naturally generalizes the classical notion of a \textbf{cylinder set} in symbolic dynamics.
\end{rem}

\subsection{Expansiveness and generators for multivalued maps}

In this subsection, we demonstrate the applicability of the framework established above. we introduce the concepts of {generators} in the context of multivalued maps. These definitions generalize the standard single-valued counterparts found in \cite{KR} and \cite{PW}.

\begin{de}
    Let $(X, f)$ be a multivalued system and let $\mathcal{U} \in \mathcal{FOC}(X)$.
    \begin{enumerate}
        \item $\mathcal{U}$ is called a \textbf{generator} for $f$ if for every sequence $\mathbf{U}=\{U_i\}_{i\geq 0}$ in $\mathcal{U}$, $[\overline{\mathbf{U}}]$ contains at most one point, where $\overline{\mathbf{U}} := \{\overline{U}_i\}_{i\geq 0}.$
        
        \item $\mathcal{U}$ is called a \textbf{weak generator} for $F$ if for every sequence $\mathbf{U}=\{U_i\}_{i\geq 0}$ in $\mathcal{U}$,  $[\mathbf{U}]$ contains at most one point.
    \end{enumerate}
\end{de}

\begin{prop}\rm{(cf. \cite[Theorem 5.23]{PW}).}\label{prop3.6}
    Let $f: X \to 2^X$ be a multivalued map on a compact Hausdorff space $X$, and let $\mathcal{U}$ be a generator for $f$. Then, for every open cover $\mathcal{V}$ of $X$, there exists $N \in \mathbb{N}$ such that for any sequence $U_0, \dots, U_N \in \mathcal{U}$, the set
       $ [U_0, \dots, U_N]$
    is contained in some element of $\mathcal{V}$.
\end{prop}
\begin{proof}
   
    Let $\alpha$ be an element of the uniformity (an entourage) which refines the cover $\mathcal{V}$ (essentially a Lebesgue entourage). We show there exists $N > 0$ such that for any orbit segment of length $N$, the set $[U_0, \dots, U_N]$ is $\alpha$-small (i.e., contained in $\mathcal{V}$).

    Suppose the contrary. Then for each $n$, there exist points $x_n, y_n \in [U_0^{(n)}, \dots, U_n^{(n)}]$ such that $(x_n, y_n) \notin \alpha$.
    By compactness, passing to a subsequence, $x_{n_k} \to x$ and $y_{n_k} \to y$. Since the complement of $\alpha$ is closed, we have $(x, y) \notin \alpha$, which implies $x \neq y$.

    Since $\mathcal{U}$ is finite, there exists $U_0\in \mathcal{U}$ such that for infinitely many $j_k$, we have $x_{j_k},y_{j_k}\in \overline{U_0}$.  Similarly, we can find $U_1\in \mathcal{U}$ such that for infinitely many $j_k$, $x_{j_k},y_{j_k}\in U_0$ and $f(x_{j_k})\cap \overline{U_1}\ne\emptyset$, $f(y_{j_k})\cap \overline{U_1}\ne\emptyset$. As $f$ is upper semicontinuous, we obtain $x,y\in f^{-1}(\overline{U_1})$. 
   By induction, for each $n$, we can find $U_n\in \mathcal{U}$ such that $x_{j_k},y_{j_k}\in [\overline{U_0},\cdots,\overline{U_n}]$ for infinitely many $j_k$. Thus, we get $x\ne y\in \bigcap_{i\in \omega
   }[\overline{U}_0,\cdots,\overline{U}_i]$, but this contracts that $\mathcal{U}$ is a generator. Thus, there is a $N$ such that the diameter of $[U_0,\cdots,U_N]$ is less than $\alpha$, 

\end{proof}

\begin{de}[cf. \cite{Wi}]
    A multivalued map $f: X \to 2^X$ is said to be \textbf{expansive} if there exists a constant $\delta > 0$ such that $x = y$ whenever there exist $f$-orbits $(x_i)_{i\geq 0}$ and $(y_i)_{i\geq 0}$ of $x$ and $y$, respectively, satisfying
       $ d(x_i, y_i) \le \delta \quad \text{for all } i \ge 0.$
\end{de}

With Proposition \ref{prop3.6}, one can easily prove that 
\begin{thm}
    A multivalued system $(X,f)$ is expansive if and only if $f$ has a generator if and only if $f$ has weak generator.
\end{thm}

Recall a well-known result for expansive single-valued maps.

\begin{thm}[cf. \cite{KR, PW}]
    Let $(X,f)$ be an expansive single-valued system. $(\operatorname{Orb}_f(X), \sigma_f)$ is a factor of a subshift over a finite alphabet. Moreover, if $X$ is totally disconnected, then $({\rm{Orb}}_f,\sigma_f)$ is conjugate to a subshift.
\end{thm}

\begin{rem}\label{rmk3.10}
Interestingly, for multivalued systems, the examples in the next subsection demonstrates that the orbit space of an expansive system can be conjugate to a subshift over a finite alphabet, even when the phase space is a continuum.    
\end{rem}

\section{Shifts of finite type and multivalued shadowing property}\label{se4}
in this section, we consider the connection of shift of finite type and multivalued shadowing property. As far as we know, there is little discussions for multivalued version. In this part, some interesting examples are constructed and theorems are established.

\begin{de}\rm{(cf. \cite{MMT}).}
Let $f:X\to 2^X$ be an upper semicontinuous map. We say that $f$ has \textbf{shadowing property} if for each $\varepsilon>0$, there is $\delta>0$ such that if $\{x_i\}_{i\geq 0}$ is a sequence with $d(f(x_i),x_{i+1})<\delta$, where $d(f(x_i),x_{i+1}):=\inf\{d(x,x_{i+1}):x\in f(x_i)\}$, then there is an $f$-orbit $(y_i)_{i\geq 0}\in {\rm{Orb}}_f(X)$ such that $d(x_i,y_i)\leq \varepsilon$ for all $i\geq 0$.
\end{de}

If $\{x_i\}_{i\geq 0}$ satisfies $d(f(x_i),x_{i+1})<\delta$, we say that $\{x_i\}_{i\geq 0}$ is a $\delta$-\textit{pseudo orbit} of $f$, and if $\{y_i\}_{i\geq 0}$ satisfies $d(x_i,y_i)\leq \varepsilon$, we say that $\{y_i\}_{i\geq 0}$  $\varepsilon$-\textit{shadows} $\{x_i\}_{i\geq 0}.$

Here, we give an observation for multivalued maps.

\begin{lem}[\cite{CKY}, cf. Lemma 2.1 for single-valued maps]
    Let $X$ be a compact metric space. Let $f, g \colon X \to 2^X$ be multivalued maps and $\varepsilon, \delta > 0$. 
    Suppose that for all $x \in X$,
       $ \overline{B}_d(f(x), \varepsilon+\delta) \subset g(\overline{B}_d(x, \varepsilon)),$
    where $\overline{B}_d(x, r) := \{y \in X : d(x, y) \le r\}$ denotes the closed ball. 
    Then, each $\delta$-pseudo orbit $\{x_i\}_{i\geq 0}$ of $f$ can be $\varepsilon$-shadowed by a $g$-orbit $\{z_i\}_{i\geq 0}$.
\end{lem}

\begin{proof}
    Let $\{x_i\}_{i\geq 0}$ be a $\delta$-pseudo orbit of $f$. We define a sequence of sets $\mathbf{W}_0, \dots, \mathbf{W}_n, \dots$ as follows:
    \[ 
        \mathbf{W}_i := [\overline{B}_d(x_0,\varepsilon),\cdots, \overline{B}_d(x_i,\varepsilon)] = \overline{B}_d(x_0,\varepsilon) \cap g^{-1}(\overline{B}_d(x_1,\varepsilon) \cap \cdots \cap g^{-1}(\overline{B}_d(x_i,\varepsilon))).
    \]
 $\mathbf{W}_i$ is closed, as $g$ is upper semicontinuous.
 
 Note that for a multivalued map $g$, we have the following property:
    \begin{equation}\label{eq4.1}
        g(A)\cap B \subset g(A\cap g^{-1}(B)).
    \end{equation}
    Therefore, we have the following inclusion:
    \begin{align*}
        g^i(\mathbf{W}_i) &\supset g^{i-1}(g(\overline{B}_d(x_0,\varepsilon))\cap [\overline{B}_d(x_1,\varepsilon),\cdots,\overline{B}_d(x_i,\varepsilon)])\\
        &\supset g^{i-1}(\overline{B}_d(f(x_0),\delta+\varepsilon)\cap [\overline{B}_d(x_1,\varepsilon),\cdots,\overline{B}_d(x_i,\varepsilon)])\\
        &\supset g^{i-1}([\overline{B}_d(x_1,\varepsilon),\cdots,\overline{B}_d(x_i,\varepsilon)])\\
        &\supset \cdots \supset \overline{B}_d(x_i,\varepsilon),
    \end{align*}
  which means that $\mathbf{W}_i\ne\emptyset$ for all $i\geq 0$.  Thus, by the finite intersection property of compact sets, $\bigcap_{i} \mathbf{W}_i \neq \emptyset$. For any $z_0 \in \bigcap_{i} \mathbf{W}_i$, there exists a $g$-orbit $\{z_i\}_{i\geq 0}$ with $z_i \in \overline{B}_d(x_i,\varepsilon)$, which means it $\varepsilon$-shadows $\{x_i\}$.
\end{proof}

\begin{cor}\label{cor2.3}
For any $\varepsilon>0$ if there is $\delta>0$ such that $\overline{B}_d(f(x),\delta+\varepsilon)\subset f(\overline{B}_d(x,\varepsilon))$ for all $x\in X$, then $f:X\to 2^X$ has shadowing property.
\end{cor}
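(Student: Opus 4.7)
The plan is to obtain this as an immediate specialization of the preceding lemma by setting $G := F$. Given $\varepsilon > 0$, the hypothesis of the corollary supplies $\delta > 0$ such that
\[\overline{B}_d(F(x),\delta+\varepsilon) \subset F(\overline{B}_d(x,\varepsilon)) \quad \text{for all } x \in X,\]
which is exactly the containment required to fire the lemma with $G$ replaced by $F$. Invoking the lemma therefore yields: every $\delta$-pseudo-orbit $\{x_i\}$ of $F$ can be $\varepsilon$-shadowed by an $F$-orbit $(z_i) \in {\rm Orb}_F(X)$, i.e.\ $d(x_i, z_i) < \varepsilon$ for all $i \in \omega$. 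Since $\varepsilon > 0$ was arbitrary and $\delta = \delta(\varepsilon)$ is produced by the hypothesis, this is verbatim the definition of the set-valued shadowing property for $F$.

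There is no genuine obstacle here: all of the real work, namely the construction of the shadowing orbit from the nested orbital discriminants $[\overline{B}_d(x_0,\varepsilon), \ldots, \overline{B}_d(x_i,\varepsilon)]$ together with the telescoping inclusion $G(A \cap G^{-1}(B)) \supset G(A) \cap B$, has already been carried out inside the lemma. The only substantive content of the corollary is the observation that when $G = F$ the shadowing orbit produced by the lemma lies in the same set-valued system whose pseudo-orbits we started with, so the lemma's conclusion is literally the definition of shadowing for $F$.

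From a broader perspective, the hypothesis is a natural \emph{local covering} condition on $F$: the $\delta$-thickening of each image $F(x)$ is already swallowed by the image under $F$ of the $\varepsilon$-ball about $x$, so a small pseudo-orbit jump of size $\delta$ can always be absorbed by perturbing the base point within the $\varepsilon$-ball. I would record this interpretation but spend no further proof effort, since the entire argument reduces to citing the preceding lemma with $G = F$.
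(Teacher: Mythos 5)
Your proposal is correct and matches the paper's intent exactly: the corollary is stated without proof precisely because it is the immediate specialization $G=F$ of the preceding lemma, and your reduction to that lemma is the whole argument. Nothing further is needed.
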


\begin{exam}
    Let $X=[0,1]$ be the unit interval. We define
    \[
f=\begin{cases}
    2x & \text{ if }x\in [0,\frac{1}{2}),\\
    \{0,1\}&\text{ if }x=\frac{1}{2},\\
    2x-1 &\text{ if }x\in (\frac{1}{2},1].
\end{cases}
    \]
For each $x\in [0,1]$, we consider its binary expansion, like 
\[x_0=\Sigma_{i=1}^\infty1/2^ia_i,\text{ where }a_i\in \{0,1\}.\]
Then we have:

\textbf{Fact 1.} Every number in $[0,1]$ has at most two representation;

\textbf{Fact 2.} apart from rational numbers with bases that are powers of $2$, all other numbers possess only one binary representation.

Let $(\{0,1\}^{\mathbb{N}},\sigma)$ be the full shift over alphabet $\{0,1\}$, we define a map
\[\pi:(\{0,1\}^{\mathbb{N}},\sigma)\to (\mathrm{Orb}_f,\sigma_f)\]
by  \[(a_i)_{i\geq 0}\mapsto \left(\sum_{i=0}^\infty 1/2^ia_i, \sum_{i=1}^\infty 1/2^ia_i,\cdots, \sum_{i=n}^\infty 1/2^ia_i,\cdots\right).\]
It is a standard way to prove that it is a hmomeomorphsim and a dynamical conjugacy, which means that the action of $([0,1],f)$ is actually an action of  flipping coin.
\end{exam}

\begin{rem}
    The example above is expansive, which corresponds to Remark \ref{rmk3.10}. Besides, it  satisfies the condition of Corollary \ref{cor2.3}, thus, it has multivalued shadowing property.
\end{rem}

 The example above shows that a orbit space of an multivalued system on continuum may fail to be connected, here, we include a multivalued map such that its phase space is still $[0,1]$, but its orbit is the Golden Mean Shift (a strictly subshift of finite type).

\begin{exam}
    Let $X=[0,1]$. The map $f:X\to 2^X$ is defined by 
    \begin{equation*}
f(x) = \begin{cases} 
\{\frac{5}{3}x\} & x \in [0, 0.6) \quad (I_0-0.6) \\
\{0, 1\} & x = 0.6 \\
\{\frac{3}{2}(x - 0.6)\} & x \in (0.6, 1] \quad (I_1)
\end{cases}
\end{equation*}
We shall prove that its orbit is zero-dimensional and conjuagate to the  golden mean shift (forbidding the word $"11"$).

Let $I_0=[0,0.6]$ and $I_1=( 0.6,1]$. Note that $f(I_0)\cap I_1\ne\emptyset$, $f(I_0)\cap I_1\ne\emptyset$ and $f(I_1)\cap I_0\ne\emptyset$, but $f(I_1)\cap I_1=\emptyset$. Actually, if $x\in I_1$, then $f(x)\in I_0$. We claim that ${\rm{Orb}}_f$ is conjugate to the golden mean shift $\Sigma$. Here, we employ the orbit tracking method described in Section~\ref{se:3.1}.
\begin{figure}[htbp]
    \centering
    \begin{tikzpicture}[shorten >=1pt, node distance=3cm, on grid, auto]
      
        \tikzstyle{every state}=[fill=white, draw=black, text=black, minimum size=1cm]

        \node[state] (0)              {$I_0$};
        \node[state] (1) [right=of 0] {$I_1$};

        \path[->, >=stealth, thick]
            (0) edge [loop above] node {} (0)
                edge [bend left=20]  node {} (1)
            (1) edge [bend left=20]  node {} (0);
    \end{tikzpicture}
    \caption{The finite directed graph representation of golden mean shift}
    \label{fig:golden_mean_shift}
\end{figure}

For any $\boldsymbol{x}\in {\rm{Orb}}_f$, we define 
\[\varphi:\boldsymbol{x}\mapsto (a_i)_{i\geq 0}\in \Sigma, \text{ if }x_i\in I_{a_i}.\]
First, we prove that $\varphi$ is injective. Suppose $\varphi(\boldsymbol{x})=\varphi(\boldsymbol{y})$ and $x_i\ne y_i$ with $\left\vert x_i-y_i\right\vert>\eta>0$ for some $i\geq 0$, but $x_s, y_s$, $s\geq i$, in the same $I_{a_s}$, then 
\[\left\vert f(x_i)-f(y_i)\right\vert\geq \min\left\{\frac{5}{3}\left\vert x_i-y_i \right\vert,\frac{3}{2}\left\vert x_i-y_i\right\vert\right\}\geq \frac{5}{3}\eta,\]
After applying $f$ to $x_i, y_i$ $n$ times, we can obtain
\[\left\vert f^n(x_i)-f^n(y_i)\right\vert\geq \left(\frac{5}{3}\right)^n\eta>0.6,\]
which is impossible. Hence, $x_i=y_i$ for all $i\geq 0$.

Thus, $\varphi:{\rm{Orb}}_f([0,1])\to \Sigma$ is injective.

Next, we claim that $\varphi$ is surjective. It suffices to prove that for any finite word $w=a_0\cdots a_n$ in $\Sigma$, the set 
\[\mathbf{I}=I_{a_0}\cap f^{-1}(I_{a_1}\cap \cdots f^{-1}(I_{a_n}))\ne\emptyset.\]

Recall that we have 
\begin{itemize}
    \item $f(I_0-0)=[0,1]-0=I_0\cup I_1-0$;
    \item $f(I_1)=I_0-0$.
\end{itemize}
Thus, 
\begin{align*}
    f^n(\mathbf{I})&\supset f^{n-1}(f(I_{a_0})\cap I_{a_1}\cap f^{-1}(I_{a_2}\cdots f^{-1}(I_{a_n}))),
\end{align*}
as $I_1I_1$ can not appear, one can use the property $f(A\cap f^{-1}(B))\supset f(A)\cap B$ by induction,  we can obtain $f^n(\mathbf{I})\ne\emptyset$. Therefore, $\mathbf{I}\ne\emptyset$.

Above all, its orbit is conjugate to the golden mean shift.
\end{exam}

\begin{rem}
    If we consider the finite shadowing property (see e.g., \cite{DGS, MMT}), then the following modified example shows that 
                  \begin{align}\label{eqe4.1}
f(x)=\left\{
             \begin{aligned}
             &\{2x\}, &0\leq x<1,  \\  
             &\{0\}, & x=1,\\  
             &\left\{2x-2\right\}, &1<x\leq 2.
             \end{aligned}  
\right.
\end{align}
a non-continuous map and its graph is not closed. $f$ has finite shadowing property but not shadowing property.
\end{rem}

As we shall consider the multivalued shadowing property on totally disconnected space, we include here a totally disconnected multivalued system. Later, we shall see shadowing property is a common phenomenon that exists almost universally on compact totally disconnected spaces.
\begin{exam}
    Let $C_1, C_2,$ and $C_3$ be the Cantor ternary sets generated on the intervals $[0,1]$, $[1,2]$, and $[2,3]$, respectively. Let $X = C_1 \cup C_2 \cup C_3$ be equipped with the Euclidean metric. Each element $x \in X$ admits a ternary expansion of the form:
    \[
        x = a_0 + \sum_{i=1}^\infty a_i 3^{-i}, \quad \text{where } a_0 \in \{0,1,2\} \text{ and } a_i \in \{0,2\} \text{ for } i \ge 1.
    \]
    Define the map $f: X \to 2^X$ by:
    \[
    f(x) = \begin{cases}
        \{3x\}, & x \in C_1 \setminus \{1\}, \\
        \{0, 3\}, & x \in \{1, 2\}, \\
        \{3x-3\}, & x \in C_2 \setminus \{1, 2\}, \\
        \{3x-6\}, & x \in C_3 \setminus \{2\}.
    \end{cases}
    \]
    By Corollary \ref{cor2.3}, $f$ is an upper semicontinuous map with the shadowing property on the compact totally disconnected Hausdorff space $X$.
\end{exam}

\subsection{Shifts of finite type and multivalued shadowing property}
 To begin with, we recall some notions about shift spaces on finite alphabets.
\begin{de}
    Let $S$ be a non-empty finite set and provide it with discrete topology. The \textit{full shift} on the alphabet $S$ is the set
    \[
        S^\omega = \prod_{i\in \omega} S = \{ (x_i)_{i\in\omega} : x_i \in S \text{ for all } i \in \omega \},
    \]
    endowed with the product topology. The standard metric defined on $S^\omega$ is given by
    \[
        d_\infty(\boldsymbol{x}, \boldsymbol{y}) = \begin{cases}
            0, & \text{if } x_i = y_i \text{ for all } i \in \omega, \\
            2^{-k}, & \text{otherwise, where } k = \min \{ j \in \omega : x_j \neq y_j \},
        \end{cases}
    \]
    for any $\boldsymbol{x} = (x_i), \boldsymbol{y} = (y_i) \in S^\omega$.
\end{de}

\begin{de}
    Let $S^\omega$ be the full shift space. The \textit{shift map} (or \textit{left shift}) $\sigma: S^\omega \to S^\omega$ is given by 
    \[
        \sigma(\boldsymbol{x}) = (x_{i+1})_{i \in \omega} \quad \text{for all } \boldsymbol{x} = (x_i)_{i \in \omega} \in S^\omega.
    \]
\end{de}

\begin{de}
    A pair $(X, \sigma)$ is called a \textit{subshift} (or simply \textit{shift}) if $X \subset S^\omega$ is a closed set and $\sigma(X) \subseteq X$, that is, $X$ is a closed, $\sigma$-invariant set in $S^\omega$.
\end{de}

Let $\boldsymbol{x} = (x_i) \in S^\omega$ and $n \in \mathbb{N}$. We denote by $x_n$ the $n$-th coordinate of $\boldsymbol{x}$, and by $\pi_n: S^\omega \to S$ the natural projection map onto the $n$-th coordinate.

Let $S$ be a finite alphabet and $(X, \sigma)$ a subshift of $(S^\omega, \sigma)$. We say that a word $u \in S^i$ is \textit{allowed} in $X$ if it appears as an initial block of some $\boldsymbol{x} \in X$, that is, $u = (x_0, \dots, x_{i-1})$. For each $i \in \omega$, we denote by $\mathcal{L}_i(X)$ the set of allowed words in $X$ of length $i$:
\[
    \mathcal{L}_i(X) = \left\{ (a_0, \dots, a_{i-1}) \in S^i : \text{ there is } \boldsymbol{x}\in X \text{ such that } (x_0, \dots, x_{i-1}) = (a_0, \dots, a_{i-1}) \right\}.
\]
We denote by $\mathcal{L}(X) = \bigcup_{i \in \omega} \mathcal{L}_i(X)$ the set of all allowed words in $X$.

It is known that a subshift can be defined in terms of \textit{forbidden} words. More precisely, given a set $F \subset \mathcal{L}(S^\omega)$, we define $X_F \subset S^\omega$ as the set of all sequences in $S^\omega$ that do not contain any word from $F$. It is easy to verify that $(X_F, \sigma)$ is a subshift of $S^\omega$. Conversely, given a subshift $(X, \sigma)$, if we set $F = \mathcal{L}(S^\omega) \setminus \mathcal{L}(X)$, then $X = X_F$.

\begin{de}\label{de4.6}
    A shift $X$ is said to be of \textit{$M$-step} for some $M \in \mathbb{N}$, if there exists a set of forbidden words $F$ such that $X = X_F$ and every word in $F$ has length $M+1$. Such a system is also called a \textit{subshift of finite type} (or simply shift of finite type). In particular, we say that a shift is \textit{$1$-step} if the length of the words in $F$ is $2$.
\end{de}

Note that although the following result can be derived using expansiveness and results from subsequent sections, our goal here is to provide a direct proof. 

\begin{thm}\label{thm4.8}
    Let $(X, f)$ be an expansive compact totally disconnected multivalued system. If $f$ has the shadowing property, then its orbit system is conjugate to a subshift of finite type. 
\end{thm}

\begin{proof}
Consider the following commutative diagram:
\begin{displaymath}
    \xymatrix{ 
        \mathrm{Orb}_f(X) \ar[rr]^{\sigma_f} \ar[d]_h & & \mathrm{Orb}_f(X) \ar[d]^h \\
        \Sigma \ar[rr]_\sigma & & \Sigma 
    }
\end{displaymath}
From the classical result, $(\mathrm{Orb}_f(X), \sigma_f)$ is conjugate to some subshift $\Sigma \subset S^\omega$ on a finite alphabet $S$. Hence, there is a conjugacy $h: \mathrm{Orb}_f(X) \to \Sigma$ such that $h \circ \sigma_f = \sigma \circ h$.

Let $d$ be a metric on $X$. We recall the compatible metric on $\mathrm{Orb}_f(X)$ defined by:
\[
    \rho((x_i), (y_i)) = \sum_{i \in \omega} \frac{d(x_i, y_i)}{2^{i+1}} \quad \text{for any} \ (x_i), (y_i) \in \mathrm{Orb}_f(X).
\]

Consider the uniform continuity of both $h$ and $h^{-1}$. Let $\varepsilon > 0$ be small enough, and let $\delta > 0$ be the constant witnessing the $\varepsilon$-shadowing property of $f$, and let $m \in \mathbb{N}$ be large enough such that:
\begin{align}\label{eqq4.1}
    \rho(\boldsymbol{x}, \boldsymbol{y}) < \varepsilon + \delta + \frac{1}{2^m} \implies d_\infty(h(\boldsymbol{x}), h(\boldsymbol{y})) < \frac{1}{4}
\end{align}
for all $\boldsymbol{x}, \boldsymbol{y} \in \mathrm{Orb}_f(X)$.

Similarly, let $n > m$ be large enough such that:
\begin{align}\label{eqq4.2}
    d_\infty(\boldsymbol{u}, \boldsymbol{v}) < \frac{1}{2^{n}} \implies \rho(h^{-1}(\boldsymbol{u}), h^{-1}(\boldsymbol{v})) < \frac{\delta}{2}
\end{align}
for all $\boldsymbol{u}, \boldsymbol{v} \in \Sigma$.

Let $W = \mathcal{L}_{2n}(S^\omega) \setminus \mathcal{L}_{2n}(h(\mathrm{Orb}_f(X)))$. From the definition of the subshift $X_W$, we have $h(\mathrm{Orb}_f(X)) \subset X_W$. Now, we claim that $X_W \subset h(\mathrm{Orb}_f(X))$.

Let $\boldsymbol{y}=(y_i) \in X_W$. Then for each $i \in \omega$, we have $(y_i, \dots, y_{i+2n-1}) \in \mathcal{L}_{2n}(h(\mathrm{Orb}_f(X)))$. Thus, there exists $\boldsymbol{x}_i \in h(\mathrm{Orb}_f(X))$ such that $(x_{i,0}, \dots, x_{i,2n-1}) = (y_i, \dots, y_{i+2n-1})$. This implies:
\begin{align}\label{eqmain}
    d_\infty(\sigma^j(\boldsymbol{x}_i), \sigma^{i+j}(\boldsymbol{y})) < \frac{1}{2^n} \quad \text{for each } 0 \leq j \leq n-1.
\end{align}
Furthermore, we have:
\[
    d_\infty(\sigma^j(\boldsymbol{x}_i), \boldsymbol{x}_{i+j}) < \frac{1}{2^n} \quad \text{for each } 0 \leq j \leq n-1.
\]
Then from \eqref{eqq4.2}, we get:
\[
    \rho(h^{-1}(\sigma^j(\boldsymbol{x}_i)), h^{-1}(\boldsymbol{x}_{i+j})) < \frac{\delta}{2} \quad \text{for each } 0 \leq j \leq n-1.
\]
Recall that $\Pi_0$ is the first coordinate projection from $\mathrm{Orb}_f(X)$ to $X$. Since $d(u_0, v_0) \leq 2\rho(\boldsymbol{u}, \boldsymbol{v})$, we obtain:
\[
    d(f(\Pi_0(h^{-1}(\boldsymbol{x}_i))), \Pi_0(h^{-1}(\boldsymbol{x}_{i+1}))) < \delta \quad \text{for all } i \in \omega.
\]
Also:
\[
    d(\Pi_0(h^{-1}(\sigma^j(\boldsymbol{x}_i))), \Pi_0(h^{-1}(\boldsymbol{x}_{i+j}))) < \delta \quad \text{for all } i \in \omega, \ 0 \leq j \leq n-1.
\]
As $f$ has the shadowing property, there is an $f$-orbit $\boldsymbol{z}=(z_i)$ such that:
\[
    d(z_i, \Pi_0(h^{-1}(\boldsymbol{x}_i))) < \varepsilon \quad \text{for all } i \in \omega.
\]
We estimate the distance:
\begin{align*}
   d(z_{i+j}, \Pi_0(\sigma_{f}^j(h^{-1}(\boldsymbol{x}_i)))) &= d(z_{i+j}, \Pi_0(h^{-1}(\sigma^j(\boldsymbol{x}_i)))) \\
   &< d(z_{i+j}, \Pi_0(h^{-1}(\boldsymbol{x}_{i+j}))) + d(\Pi_0(h^{-1}(\boldsymbol{x}_{i+j})), \Pi_0(h^{-1}(\sigma^j(\boldsymbol{x}_i)))) \\
   &< \varepsilon + \delta
\end{align*}
for all $i \in \omega$ and $0 \leq j \leq n-1$. Then we obtain:
\[
    \rho(\sigma_f^i(\boldsymbol{z}), h^{-1}(\boldsymbol{x}_i)) < \sum_{k=0}^n \frac{\varepsilon+\delta}{2^{k+1}} + \sum_{k=n+1}^\infty \frac{1}{2^k} < \varepsilon + \delta + \frac{1}{2^n} \quad \text{for all } i \in \omega.
\]
Hence, from \eqref{eqq4.1}, we have:
\[
    d_\infty(\sigma^i(h(\boldsymbol{z})), \boldsymbol{x}_i) = d_\infty(h(\sigma_f^i(\boldsymbol{z})), \boldsymbol{x}_i) < \frac{1}{4} \quad \text{for all } i \in \omega.
\]
Finally, combining this with \eqref{eqmain}, we have:
\begin{align*}
    d_\infty(\sigma^i(h(\boldsymbol{z})), \sigma^i(\boldsymbol{y})) &= d_\infty(h(\sigma_f^i(\boldsymbol{z})), \sigma^i(\boldsymbol{y})) \\
    &\leq d_\infty(\sigma^i(h(\boldsymbol{z})), \boldsymbol{x}_i) + d_\infty(\sigma^i(\boldsymbol{y}), \boldsymbol{x}_i) \\
    &< \frac{1}{4} + \frac{1}{2^n} < \frac{1}{2} \quad \text{for all } i \in \omega,
\end{align*}
which implies that $h(\boldsymbol{z}) = \boldsymbol{y}$. Therefore, $X_W = h(\mathrm{Orb}_f(X))$.
\end{proof}

\begin{rem}
    It can be shown that if $f \colon X \to 2^X$ is a continuous map satisfying the conditions in Theorem \ref{thm4.8}, then $f$ possesses the shadowing property if and only if it is conjugate to a subshift of finite type.
\end{rem}

\subsection{multivalued shadowing on compact Hausdorff spaces}
In \cite{GM}, Good and Meddaugh characterize the shadowing property for continuous single-valued maps using finite open covers. Their definition helps visualize the geometric nature of this property. To avoid complex computations, we adopt a similar approach for multivalued maps, defining the shadowing property in terms of open covers.

\begin{de}\label{def4.12}
    Let $(X, f)$ be a multivalued system and $\mathcal{U}$ be a finite open cover of $X$ (denoted $\mathcal{U} \in \mathcal{FOC}(X)$).
    \begin{enumerate}
        \item A sequence $(x_i)_{i \in \omega}$ is called a \textit{$\mathcal{U}$-pseudo-orbit} if for each $i \in \omega$, there exists $U_{i+1} \in \mathcal{U}$ such that:
        \[
            f(x_i) \cap U_{i+1} \neq \emptyset \quad \text{and} \quad x_{i+1} \in U_{i+1}.
        \]
       
        \item A sequence of open sets $(U_i)_{i \in \omega} \subseteq \mathcal{U}$ is called a \textit{$\mathcal{U}$-pseudo-orbit pattern} if there exists a sequence $(x_i)_{i \in \omega}$ in $X$ such that for all $i \in \omega$:
        \[
            x_i \in U_i \quad \text{and} \quad f(x_i) \cap U_{i+1} \neq \emptyset.
        \]
        
        \item A point $z \in X$ is said to \textit{$\mathcal{U}$-shadow} the sequence $(x_i)_{i \in \omega}$ if there exists an $f$-orbit $(z_i)_{i \in \omega}$ with $z_0 = z$ such that for each $i \in \omega$, there exists $U_i \in \mathcal{U}$ with:
        \[
            x_i, z_i \in U_i.
        \]
        (In this case, we also say that the orbit $(z_i)$ shadows $(x_i)$).
    \end{enumerate}
\end{de}

\begin{lem}\label{lem4.11}
    Let $X$ be a compact metric space. Then the multivalued map $f \colon X \to 2^X$ has the shadowing property if and only if for every finite open cover $\mathcal{U} \in \mathcal{FOC}(X)$, there exists an open cover $\mathcal{V} \in \mathcal{FOC}(X)$ such that every $\mathcal{V}$-pseudo-orbit is $\mathcal{U}$-shadowed by some $f$-orbit $(z_i)_{i\in\omega} \in \operatorname{Orb}_f(X)$.
\end{lem}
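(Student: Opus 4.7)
The plan is to use the Lebesgue covering lemma to translate freely between the metric formulation of pseudo-orbits/shadowing and the open-cover formulation, in both directions. This is the standard type of argument used for single-valued maps, and the set-valued nature only introduces the minor subtlety that $d(F(x_i),x_{i+1})<\delta$ means there exists a witness $y\in F(x_i)$ with $d(y,x_{i+1})<\delta$, not that all points of $F(x_i)$ are close.

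For the forward direction, I would argue as follows. Given $\mathcal{U}\in\mathcal{FOC}(X)$, let $\varepsilon>0$ be a Lebesgue number for $\mathcal{U}$, so that every set of diameter less than $\varepsilon$ lies inside some $U\in\mathcal{U}$. Apply the (metric) shadowing property of $F$ to obtain $\delta>0$ witnessing $\varepsilon$-shadowing. Now choose a finite open cover $\mathcal{V}\in\mathcal{FOC}(X)$ whose members all have diameter strictly less than $\delta$ (possible by compactness). If $\{x_i\}$ is any $\mathcal{V}$-pseudo-orbit with witnesses $V_{i+1}\in\mathcal{V}$, pick $y_{i+1}\in F(x_i)\cap V_{i+1}$; since $x_{i+1}\in V_{i+1}$ as well, $d(F(x_i),x_{i+1})\leq d(y_{i+1},x_{i+1})<\delta$. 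Hence $\{x_i\}$ is a $\delta$-pseudo-orbit in the metric sense, and by choice of $\delta$ some $F$-orbit $(z_i)$ satisfies $d(x_i,z_i)<\varepsilon$. The Lebesgue property then places $\{x_i,z_i\}$ in a common element of $\mathcal{U}$, giving $\mathcal{U}$-shadowing.

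For the converse, given $\varepsilon>0$, pick a finite open cover $\mathcal{U}$ each of whose members has diameter less than $\varepsilon$ (for instance, a finite subcover of $\{B_d(x,\varepsilon/2):x\in X\}$). Apply the hypothesis to obtain $\mathcal{V}\in\mathcal{FOC}(X)$ witnessing $\mathcal{U}$-shadowing of $\mathcal{V}$-pseudo-orbits, and let $\delta>0$ be a Lebesgue number for $\mathcal{V}$. If $\{x_i\}$ is a $\delta$-pseudo-orbit in the metric sense, then for each $i$ there is $y_{i+1}\in F(x_i)$ with $d(y_{i+1},x_{i+1})<\delta$, so $\{y_{i+1},x_{i+1}\}$ has diameter less than $\delta$ and lies in a common $V_{i+1}\in\mathcal{V}$. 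This shows that $\{x_i\}$ is a $\mathcal{V}$-pseudo-orbit as in Definition \ref{def4.12}(1), so the hypothesis yields an $F$-orbit $(z_i)$ with $x_i,z_i\in U_i\in\mathcal{U}$ for each $i$; the diameter condition on $\mathcal{U}$ then gives $d(x_i,z_i)<\varepsilon$.

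The only place where one must be cautious is the bookkeeping in translating pseudo-orbits in one direction: the set-valued analogue uses a witness point $y\in F(x_i)\cap V_{i+1}$, rather than requiring $F(x_i)\subset V_{i+1}$ or the like, so the Hausdorff distance does not enter. With this in mind, no step is really difficult; the argument is purely a Lebesgue-number translation, performed once in each direction.
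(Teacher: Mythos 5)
Your proposal is correct and follows essentially the same route as the paper's own proof: both directions are handled by the same Lebesgue-number translation between the metric and open-cover formulations, with the same choices of $\varepsilon$, $\delta$, $\mathcal{U}$ and $\mathcal{V}$. Your write-up is in fact slightly more careful than the paper's in making the witness point $y_{i+1}\in F(x_i)\cap V_{i+1}$ explicit, but there is no substantive difference.
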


\begin{proof}
    Suppose $f$ has the shadowing property. Let $\mathcal{U} \in \mathcal{FOC}(X)$ and let $\varepsilon > 0$ be the Lebesgue number of $\mathcal{U}$. Then, for every $x \in X$, the ball $B_d(x, \varepsilon)$ is contained in some member of $\mathcal{U}$. Let $\delta > 0$ be the constant witnessing the $\varepsilon$-shadowing property for $f$.
    
    Now, let $\mathcal{V} \in \mathcal{FOC}(X)$ be a cover such that the diameter of each element in $\mathcal{V}$ is less than $\delta$. Let $\{x_i\}_{i\in\omega}$ be a $\mathcal{V}$-pseudo-orbit. By definition, for each $i$, there exists $V_{i+1} \in \mathcal{V}$ such that $f(x_i) \cap V_{i+1} \neq \emptyset$ and $x_{i+1} \in V_{i+1}$. Since the diameter of $V_{i+1}$ is less than $\delta$, we have $d(f(x_i), x_{i+1}) < \delta$. 
    Consequently, there exists an $f$-orbit $(z_i)_{i\in\omega} \in \operatorname{Orb}_f(X)$ such that $d(x_i, z_i) < \varepsilon$ for all $i \in \omega$. Since $\varepsilon$ is the Lebesgue number of $\mathcal{U}$, for each $i$, the set $\{x_i, z_i\}$ (whose diameter is less than $\varepsilon$) is contained in some set $U_i \in \mathcal{U}$. This shows that the sequence $\{x_i\}$ is $\mathcal{U}$-shadowed by $(z_i)$.

    Conversely, suppose that for each $\mathcal{U} \in \mathcal{FOC}(X)$, there exists $\mathcal{V} \in \mathcal{FOC}(X)$ such that every $\mathcal{V}$-pseudo-orbit is $\mathcal{U}$-shadowed by some $f$-orbit. Let $\varepsilon > 0$. Let $\mathcal{U} \in \mathcal{FOC}(X)$ be a cover consisting of open balls of radius $\varepsilon/2$. Let $\mathcal{V} \in \mathcal{FOC}(X)$ be the cover that witnesses the $\mathcal{U}$-shadowing. 
    Let $\delta > 0$ be the Lebesgue number of $\mathcal{V}$. Consider any $\delta$-pseudo-orbit $\{x_i\}_{i\in\omega}$. Since $d(f(x_i), x_{i+1}) < \delta$, there exists $y_i \in f(x_i)$ such that $d(y_i, x_{i+1}) < \delta$. There exists $V_{i+1} \in \mathcal{V}$ containing both $y_i$ and $x_{i+1}$. 
    By assumption, there exists an $f$-orbit $(z_i) \in \operatorname{Orb}_f(X)$ such that $x_i, z_i \in U_i$ for some $U_i \in \mathcal{U}$ for all $i$. Since the diameter of each $U_i$ is at most $\varepsilon$ (being a ball of radius $\varepsilon/2$), we have $d(x_i, z_i) < \varepsilon$ for all $i \in \omega$. Thus, $f$ has the shadowing property.
\end{proof}

\begin{thm}\label{thm3.12}
    Let $\mathcal{S} = \{g^\eta_\lambda, (X_\lambda, f_\lambda)\}_{\lambda \in \Lambda}$ be an inverse system of multivalued systems satisfying the conditions of Definition~\ref{def1.4}. Suppose that each system $(X_\lambda, f_\lambda)$ has the multivalued shadowing property and the induced inverse system of orbit spaces, $\{(g^\eta_\lambda)^*, (\operatorname{Orb}_{f_\lambda}(X_\lambda), \sigma_{f_\lambda})\}$, satisfies the Mittag-Leffler condition. Then the inverse limit system $(\varprojlim X_\lambda, \varprojlim f_\lambda)$ has the multivalued shadowing property.
\end{thm}

\begin{proof}
    Let $\mathcal{U}$ be a finite open cover of $X_\infty = \varprojlim \{ g^\eta_\lambda, X_\lambda \}$. By the definition of the topology of inverse limit, there exists $\lambda \in \Lambda$ and a finite open cover $\mathcal{W}_\lambda$ of $X_\lambda$ such that
    \[
        \mathcal{W} = \{ \pi_\lambda^{-1}(W) \cap X_\infty : W \in \mathcal{W}_\lambda \} \text{ is finer than } \mathcal{U}.
    \]
    
    Since the inverse system of orbit spaces $\{(g^\eta_\lambda)^*, (\operatorname{Orb}_{f_\lambda}(X_\lambda), \sigma_{f_\lambda})\}$ satisfies the Mittag-Leffler condition, there exists $\gamma \geq \lambda$ such that for all $\eta \geq \gamma$,
    \[
        (g^\gamma_\lambda)^*(\operatorname{Orb}_{f_\gamma}(X_\gamma)) = (g^\eta_\lambda)^*(\operatorname{Orb}_{f_\eta}(X_\eta)).
    \]

    Let $\mathcal{W}_\gamma = \{ (g^\gamma_\lambda)^{-1}(W) : W \in \mathcal{W}_\lambda \}$. Then $\mathcal{W}_\gamma$ is a finite open cover of $X_\gamma$. Since $(X_\gamma, f_\gamma)$ has the multivalued shadowing property, there exists a finite open cover $\mathcal{V}_\gamma$ of $X_\gamma$ such that every $\mathcal{V}_\gamma$-pseudo-orbit is $\mathcal{W}_\gamma$-shadowed by some $f_\gamma$-orbit.

    Define a cover for the limit space:
    \[
        \mathcal{V} = \{ \pi_\gamma^{-1}(V) \cap X_\infty : V \in \mathcal{V}_\gamma \}.
    \]
    Clearly, $\mathcal{V}$ is a finite open cover of $X_\infty$. Let $\{\boldsymbol{x}_i\}_{i \in \omega}$ be a $\mathcal{V}$-pseudo-orbit in $X_\infty$. This means for each $i$, there exists $V_{i+1} \in \mathcal{V}_\gamma$ such that
    \[
        \boldsymbol{x}_{i+1} \in \pi_\gamma^{-1}(V_{i+1}) 
    \]
    and there is an element $\boldsymbol{x}'\in \varprojlim f_\lambda(\boldsymbol{x}_i)$ such that $\boldsymbol{x}', \boldsymbol{x}_{i+1} \in \pi_\gamma^{-1}(V_{i+1})$.
    
    Projecting to $X_\gamma$, let $y_i = \pi_\gamma(\boldsymbol{x}_i)$. Then $\{y_i\}_{i \in \omega}$ forms a $\mathcal{V}_\gamma$-pseudo-orbit in $X_\gamma$ with pattern $\{V_i\}$. By the shadowing property of $f_\gamma$, there exists an orbit $\boldsymbol{z}^{(\gamma)} = \{z_i^{(\gamma)}\}_{i \in \omega} \in \operatorname{Orb}_{f_\gamma}(X_\gamma)$ such that $z_i^{(\gamma)}$ $\mathcal{W}_\gamma$-shadows $y_i$, i.e.,
    \[
        z_i^{(\gamma)} \in (g^\gamma_\lambda)^{-1}(W'_i) \quad \text{and} \quad y_i \in (g^\gamma_\lambda)^{-1}(W'_i)
    \]
    for some $W'_i \in \mathcal{W}_\lambda$ containing $\pi_\lambda(\boldsymbol{x}_i)$.
    
    Now, consider the projection of the pseudo-orbit and shadowing orbit to $X_\lambda$:
    \begin{equation}\label{eqqe4.4}
        z_i^{(\lambda)}=g^\gamma_\lambda(z_i^{(\gamma)}), \quad \pi_\lambda(\boldsymbol{x}_i)=g^\gamma_\lambda(\pi_\gamma(\boldsymbol{x}_i))\in g^\gamma_\lambda(W_i)=W_i'\in \mathcal{W}_\lambda.
    \end{equation}
    
    The sequence $\boldsymbol{z}^{(\lambda)} = \{z_i^{(\lambda)}\}$ is an element of $(g^\gamma_\lambda)^*(\operatorname{Orb}_{f_\gamma}(X_\gamma))$. By the Mittag-Leffler condition, there exists a limit orbit $\boldsymbol{Z} = \{\boldsymbol{Z}_\lambda\}_{\lambda \in \Lambda} \in \operatorname{Orb}_{\varprojlim f_\lambda}(X_\infty)$ such that
    \[
        \pi_\lambda(\boldsymbol{Z}_\lambda)_i = z_i^{(\lambda)} \quad \text{for all } i \in \omega.
    \]
    Notice that the sequence $\boldsymbol{Z}_i=\Pi_0\left(\sigma^i_{\varprojlim f_\lambda}(\{\boldsymbol{Z}_\lambda\}_{\lambda \in \Lambda})\right)$ forms a $\varprojlim f_\lambda$-orbit. Finally, from \eqref{eqqe4.4}, we obtain
   \[\boldsymbol{Z}_i,\boldsymbol{x}_i\in W\text{ for some }W\in \mathcal{W}.\]
   Hence, 
   \[\boldsymbol{Z}_i,\boldsymbol{x}_i\in U\text{ for some }U\in \mathcal{U}.\]
   Therefore, each $\mathcal{V}$-pseudo-orbit is $\mathcal{U}$-shadowed by some $\varprojlim f_\lambda$-orbit, implying $\varprojlim f_\lambda$ has multivalued shadowing property.
\end{proof}

\begin{rem}\label{rem4.14}
    The assumption of the Mittag-Leffler condition on inverse systems of orbit spaces is critical for Theorem \ref{thm3.12}.
\end{rem}

\section{Multivalued of finite type, graph cover and totally disconnected space}\label{se5}
\subsection{Graph cover}
Gambaudo and Martens~\cite{GM06} introduced graph covers for minimal Cantor systems, it is extended to all Cantor systems by Shimomura~\cite{Sh14,Sh16,Sh20}. Here, we briefly review the relevant definitions and properties of Shimomura's.

Let $V$ be a finite set endowed with the discrete topology and $E \subset V \times V$ a relation on $V$. We call the pair $G=(V,E)$ a finite (directed) \textit{graph}, where $V$ is the set of \textit{vertices} and $E$ is the set of \textit{edges}. Throughout this paper, we assume that for each $u \in V$, there exists a vertex $v \in V$ such that $(u,v) \in E$.

\begin{de}\label{def5.1}
    Let $G_1=(V_1,E_1)$ and $G_2=(V_2,E_2)$ be two graphs. A map $\varphi \colon V_1 \to V_2$ is said to be a \textit{graph homomorphism} if 
    \[
        (u,v) \in E_1 \implies (\varphi(u),\varphi(v)) \in E_2.
    \]
    We say that $\varphi$ is \textit{edge-surjective} if $(\varphi \times \varphi)(E_1) = E_2$. Furthermore, $\varphi$ is said to be \textit{$+$directional} if 
    \[
        (u,v), (u,v') \in E_1 \implies \varphi(v) = \varphi(v').
    \]
\end{de}

\begin{rem}
    Moreover, we say that a graph homomorphism is \textit{pointwise edge-surjective} if for any $v \in V_1$, the map $\varphi$ sends the set of outgoing edges from $v$ surjectively onto the set of outgoing edges from $\varphi(v)$. It is easy to verify that edge-surjectivity is not equivalent to pointwise edge-surjectivity.
\end{rem}

\begin{de}
    A graph homomorphism $\varphi \colon G_1 \to G_2$ is called a \textit{cover} if it is a $+$directional and edge-surjective graph homomorphism.
\end{de}

\begin{de}
    Let $G_0 \xleftarrow{\varphi_0} G_1 \xleftarrow{\varphi_1} G_2 \xleftarrow{\varphi_2} \cdots$ be a sequence of $+$directional edge-surjective graph homomorphisms, where $G_i=(V_i,E_i)$ for all $i \in \mathbb{N}_0$. Define the inverse limit vertex space by
    \[
        V_\mathcal{G} = \left\{ (x_i)_{i\ge 0} \in \prod_{i=0}^\infty V_i : x_i = \varphi_i(x_{i+1}) \text{ for all } i \ge 0 \right\}
    \]
    and the edge relation by
    \[
        E_\mathcal{G} = \left\{ (\boldsymbol{x}, \boldsymbol{y}) \in V_\mathcal{G} \times V_\mathcal{G} : (x_i, y_i) \in E_i \text{ for all } i \ge 0 \right\}.
    \]
    The dynamical action is defined by the relation
    \[
        \boldsymbol{x} \mapsto E_\mathcal{G}(\boldsymbol{x}) = \{ \boldsymbol{y} \in V_\mathcal{G} : (\boldsymbol{x}, \boldsymbol{y}) \in E_\mathcal{G} \}.
    \]
    We call the pair $(V_\mathcal{G}, E_\mathcal{G})$ a \textit{graph cover} (or an inverse limit of graph covers).
\end{de}

We have the following result (cf. \cite{Sh14,Sh16,Sh20}).

\begin{thm}
    A single-valued metrizable topological dynamical system $(X,f)$ is zero-dimensional if and only if it is topologically conjugate to a graph cover $(V_\mathcal{G}, E_\mathcal{G})$.
\end{thm}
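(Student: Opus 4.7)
The plan is to prove the two implications separately, the easy direction being topological and the harder one combinatorial.

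For the backward direction (graph cover $\Rightarrow$ zero-dimensional), I would observe that $V_\mathcal{G}$ sits as a closed subset of the product $\Pi_{i\geq 0} V_i$ of finite discrete spaces, which is compact Hausdorff and zero-dimensional; hence so is $V_\mathcal{G}$. To see that $E_\mathcal{G}$ induces a single-valued continuous map, suppose $\boldsymbol{y},\boldsymbol{y}'\in E_\mathcal{G}(\boldsymbol{x})$ for some $\boldsymbol{x}\in V_\mathcal{G}$. Then $(x_i,y_{i+1}),(x_i,y'_{i+1})\in E_{i+1}$ for every $i$, and the +directional property of $\varphi_i$ gives $y_i = \varphi_i(y_{i+1}) = \varphi_i(y'_{i+1}) = y'_i$, so $\boldsymbol{y}=\boldsymbol{y}'$. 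Continuity follows from closedness of $E_\mathcal{G}$ in $V_\mathcal{G}\times V_\mathcal{G}$, combined with the Closed Graph Theorem \ref{thm1.1}.

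For the forward direction I would inductively construct clopen partitions $\mathcal{P}_1\leq \mathcal{P}_2\leq \cdots$ of $X$ such that (a) $\bigcup_n \mathcal{P}_n$ generates the topology, and (b) for every $n$ and every $P\in \mathcal{P}_{n+1}$, the image $f(P)$ lies in a single element of $\mathcal{P}_n$. Given $\mathcal{P}_n$, zero-dimensionality together with continuity of $f$ lets us take $\mathcal{P}_{n+1}$ to be a clopen common refinement of $\mathcal{P}_n$, $f^{-1}(\mathcal{P}_n)$, and a prescribed step of a clopen basis; condition (b) is then immediate because each $P\in\mathcal{P}_{n+1}$ is contained in a single cell of $f^{-1}(\mathcal{P}_n)$. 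Set $V_n:=\mathcal{P}_n$,
\[
E_n:=\{(P,Q)\in V_n\times V_n : P\cap f^{-1}(Q)\neq\emptyset\},
\]
and let $\varphi_n:V_{n+1}\to V_n$ send each $P\in\mathcal{P}_{n+1}$ to the unique element of $\mathcal{P}_n$ containing it. Then $\varphi_n$ is a graph homomorphism; it is edge-surjective because any $(P,Q)\in E_n$ admits $x\in P\cap f^{-1}(Q)$ giving $(P_{n+1}(x),P_{n+1}(f(x)))\in E_{n+1}$ mapping onto $(P,Q)$; and it is +directional precisely because (b) forces $f(P')$ to lie in one element of $\mathcal{P}_n$ whenever $P'\in\mathcal{P}_{n+1}$.

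Finally, define $h:X\to V_\mathcal{G}$ by $h(x)=(P_n(x))_{n\geq 1}$, where $P_n(x)$ is the unique member of $\mathcal{P}_n$ containing $x$. Continuity and injectivity follow from (a); surjectivity uses the finite intersection property on a descending sequence of nonempty clopens in compact $X$; and $E_\mathcal{G}(h(x))=\{h(f(x))\}$ by construction of $E_n$. Hence $h$ is a topological conjugacy. The main obstacle I anticipate is the partition construction: to produce $\mathcal{P}_{n+1}$ satisfying (a) and (b) simultaneously in a compact Hausdorff (not necessarily metrizable) setting one should phrase the refinement using the uniform indices $\mathbb{U}_X$ of Section 3 and Lebesgue's covering lemma, rather than diameters, and use that pulling back a finite clopen partition under a continuous map yields again a finite clopen partition, which can then be intersected cell-wise with the existing $\mathcal{P}_n$.
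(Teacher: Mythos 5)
Your proof is essentially correct, but note that the paper itself does not prove this statement: it is quoted as a known result of Shimomura (cf.\ \cite{Sh14,Sh16,Sh20}), so there is no in-paper proof to compare against. What your argument most closely parallels is the paper's own proof of the set-valued analogue, Theorem \ref{thm4.11}, where the author builds $(X_{\mathcal{P}art(X)},F_{\mathcal{P}art(X)})$ from \emph{all} clopen partitions of $X$, ordered by refinement, and shows the resulting inverse limit of finite graphs is conjugate to $(X,F)$; your construction is the single-valued specialization of this, restricted to a cofinal \emph{sequence} of partitions with the extra refinement by $f^{-1}(\mathcal{P}_n)$ that is exactly what delivers the +directional property (which is vacuous in the set-valued setting and hence absent from the paper's version). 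The substance of your argument is sound. Three small points to tighten: (i) in the backward direction you write $(x_i,y_{i+1}),(x_i,y'_{i+1})\in E_{i+1}$ where the first coordinate should be $x_{i+1}$; the deduction $y_i=\varphi_i(y_{i+1})=\varphi_i(y'_{i+1})=y_i'$ is nevertheless the right one. (ii) You verify that $E_\mathcal{G}(\boldsymbol{x})$ has at most one point but not that it is nonempty; this needs the standing hypothesis that every vertex has an outgoing edge, together with +directionality, to produce a coherent thread $(z_i)$ with $z_i=\varphi_i(z_{i+1})$ and $(x_i,z_i)\in E_i$. (iii) Your condition (a), that a countable union $\bigcup_n\mathcal{P}_n$ of finite partitions generates the topology, forces $X$ to be second countable, hence metrizable; this is consistent with Shimomura's setting (zero-dimensional compact metric systems) and with the theorem's use of a \emph{sequence} of graphs, but it means the forward direction cannot hold verbatim for non-metrizable compact Hausdorff $X$, for which one must pass to a net indexed by $\mathcal{P}art(X)$ as the paper does in Section \ref{se5}.
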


\subsection{Multivalued maps on compact totally disconnected Hausdorff spaces}

In this subsection, we consider multivalued maps on a compact totally disconnected Hausdorff space. We represent these systems as an inverse limit of an inverse system composed of elementary multivalued maps. Notably, we find that this approach coincides with the construction of a graph cover. Moreover, by invoking the \textit{Closed Graph Theorem} (Proposition \ref{prop2.1}), we demonstrate that constructing a graph cover is a method for approximating the graph of a multivalued map by finite directed graphs.

\begin{de}
    Let $V$ be a finite set endowed with the discrete topology. A multivalued system $(V,f)$, where $f \colon V \to 2^V$, is called a \textit{multivalued system of finite type}.
\end{de}

\begin{rem}
    Let $f \colon V \to 2^V$. We define a directed graph $G=(V, E_V)$ by the condition:
    \[ (u,v) \in E_V \iff v \in f(u). \]
    Conversely, if $G=(V, E_V)$ is a finite directed graph, we define the map $f \colon V \to 2^V$ by 
    \[ f(u) = \left\{ v \in V  \; \middle |\; (u,v) \in E_V \right\}. \]
\end{rem}

Given a multivalued system of finite type $(V,f)$, a sequence $\boldsymbol{x}=(x_0, x_1, \dots, x_i, \dots)$ with $x_i \in V$ belongs to the inverse limit space ${\rm{Orb}}_f(V)$ if and only if $x_i \in f(x_{i-1})$ for all $i \in \omega$. This implies that the system $(\operatorname{Orb}_f(V), \sigma_f)$ is conjugate to a 1-step shift. Consequently, we have the following result:

\begin{lem}\label{lem4.7}
    Let $f \colon V \to 2^V$ be a set-valued map of finite type. Then:
    \begin{enumerate}
        \item $f$ possesses the multivalued shadowing property;
        \item its orbit system $(\operatorname{Orb}_f(V), \sigma_f)$ is a subshift of finite type.
    \end{enumerate}
\end{lem}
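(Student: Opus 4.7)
The plan is to exploit the discreteness of $V$ in both parts. Since $V$ is finite with the discrete topology, any compatible metric $d$ on $V$ has a strictly positive separation constant $\delta_0 = \min\{d(u,v) : u,v \in V,\, u \neq v\}$.

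For part (1), given $\varepsilon > 0$, I would choose any $0 < \delta < \delta_0$. If $\{x_i\}$ is a $\delta$-pseudo-orbit, then for each $i$ we have $d(F(x_i), x_{i+1}) < \delta < \delta_0$, and since $F(x_i) \subset V$ is finite and all nonzero pairwise distances in $V$ are at least $\delta_0$, this forces $x_{i+1} \in F(x_i)$. Hence $\{x_i\}$ is already a genuine $F$-orbit, and it trivially $\varepsilon$-shadows itself by taking $z_i = x_i$. Alternatively, one could invoke the open-cover characterisation of Lemma \ref{lem4.11}: the clopen singleton cover $\mathcal{U} = \{\{v\} : v \in V\}$ is its own witness, because every $\mathcal{U}$-pseudo-orbit is forced to be a bona fide orbit.

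For part (2), I would identify ${\rm{Orb}}_F(V)$ directly with a $1$-step subshift on the finite alphabet $V$. By definition, $(x_i) \in V^{\omega}$ lies in ${\rm{Orb}}_F(V)$ if and only if $x_{i+1} \in F(x_i)$ for every $i \in \omega$. Setting the forbidden set
\[
\mathcal{F} = \{(u,v) \in V \times V : v \notin F(u)\} \subset \mathcal{L}_2(V^\omega),
\]
the condition $x_{i+1} \in F(x_i)$ is exactly the requirement that $(x_i, x_{i+1}) \notin \mathcal{F}$ for all $i$. Hence ${\rm{Orb}}_F(V) = X_\mathcal{F}$, and by Definition \ref{de4.6} this exhibits $({\rm{Orb}}_F(V), \sigma_F)$ as a $1$-step shift of finite type. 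The shift action on ${\rm{Orb}}_F(V)$ agrees with the usual left shift on $V^\omega$ restricted to $X_\mathcal{F}$, so the two dynamical systems coincide on the nose, not merely up to conjugacy.

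There is no real obstacle here; the whole statement collapses to the fact that on a finite discrete space pseudo-orbits with sufficiently small error must be exact orbits, and that the constraint defining ${\rm{Orb}}_F(V)$ involves only consecutive coordinates. The only mild point of care is to acknowledge that the shadowing definition in the paper is stated metrically, which I would handle either by picking the discrete metric on $V$ or, more cleanly, by passing through the equivalent open-cover formulation in Lemma \ref{lem4.11}.
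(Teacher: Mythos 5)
Your proposal is correct and follows the same route as the paper: the paper's proof is simply ``follows directly from the definition,'' with the identification of $({\rm Orb}_F(V),\sigma_F)$ as a $1$-step shift already sketched in the paragraph preceding the lemma, and the shadowing claim treated as immediate on a finite discrete space. Your write-up just supplies the details (the separation constant $\delta_0$ forcing pseudo-orbits to be genuine orbits, and the explicit forbidden set $\mathcal{F}$) that the paper leaves implicit.
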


\begin{proof}
    The result follows directly from the definitions.
\end{proof}

Let $(\Sigma, \sigma)$ be a subshift of finite type of $M$-step. Let $X_M$ denote the set of allowed words of length $M+1$ in $\Sigma$. As presented in \cite{PW1}, we can define a conjugacy via a \textit{sliding block code} $\phi \colon \Sigma \to (X_M)^{\mathbb{Z}}$ as follows:
\[
\phi((x_i)_{i \in \mathbb{Z}}) = 
\left( 
\begin{pmatrix}
x_0 \\
x_1 \\
\vdots \\
x_{M}
\end{pmatrix},
\begin{pmatrix}
x_1 \\
x_{2} \\
\vdots \\
x_{M+1}
\end{pmatrix},
\begin{pmatrix}
x_{2} \\
x_{3} \\
\vdots \\
x_{M+2}
\end{pmatrix},
\dots \right).
\]
We define a multivalued map $f \colon X_M \to 2^{X_M}$ by the condition: $v \in f(u)$ if and only if 
\[ u = y_1 y_2 \dots y_{M+1} \quad \text{and} \quad v = y_2 y_3 \dots y_{M+2}, \]
where $y_1 \dots y_{M+2}$ is a valid word in $\Sigma$. Since $(\Sigma, \sigma)$ is a subshift of finite type, it is conjugate to the orbit system $(\operatorname{Orb}_f(X_M), \sigma_f)$. Consequently, we have the following result:

\begin{lem}
    Every subshift of finite type is conjugate to an orbit system of a multivalued system of finite type.
\end{lem}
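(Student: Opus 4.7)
The plan is to verify directly that the sliding block code $\phi$ displayed in the excerpt is the desired conjugacy between $(\Sigma,\sigma)$ and $({\rm{Orb}}_F(X_M),\sigma_F)$. First I would check that $\phi(\Sigma)\subset {\rm{Orb}}_F(X_M)$: for $(x_i)\in\Sigma$, set $a^{(i)}=(x_i,x_{i+1},\ldots,x_{i+M-1})\in X_M$ (these are allowed because they appear as subwords of a point of $\Sigma$). The consecutive blocks $a^{(i)}$ and $a^{(i+1)}$ overlap by $M-1$ symbols, and the combined word $x_ix_{i+1}\cdots x_{i+M}$ is allowed in $\Sigma$, so by the very definition of $F$ we have $a^{(i+1)}\in F(a^{(i)})$. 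Thus $\phi((x_i))\in {\rm{Orb}}_F(X_M)$, and the intertwining $\sigma_F\circ\phi=\phi\circ\sigma$ is immediate since shifting the original sequence shifts each block by one position.

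Next I would construct an inverse $\psi:{\rm{Orb}}_F(X_M)\to \Sigma$ by sending $(a^{(i)})$ to the sequence $(x_i)$ whose $i$-th entry is the first coordinate of $a^{(i)}$. The overlap condition encoded by $a^{(i+1)}\in F(a^{(i)})$ for every $i$ forces the blocks to be consistent, so that $a^{(i)}=(x_i,x_{i+1},\ldots,x_{i+M-1})$, and it guarantees that every window of length $M+1$ in $(x_i)$ is an allowed word of $\Sigma$. Using the $M$-step description of $\Sigma$ via forbidden words of bounded length (Definition~4.6), every forbidden pattern has length at most that bound, so allowedness of all long enough windows implies $(x_i)\in\Sigma$. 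The explicit form $a^{(i)}=(x_i,\ldots,x_{i+M-1})$ shows that $\psi\circ\phi={\rm id}_\Sigma$ and $\phi\circ\psi={\rm id}_{{\rm{Orb}}_F(X_M)}$, so $\phi$ is a bijection.

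Finally, continuity of both $\phi$ and $\psi$ is clear because each coordinate of the image depends on only finitely many coordinates of the input (in fact, on a single block of length $M$ for $\phi$ and a single symbol for $\psi$); since the spaces are compact Hausdorff, $\phi$ is a homeomorphism. Combined with the intertwining relation, this is exactly the conjugacy required. The only step that requires genuine care is the verification that $\psi$ lands inside $\Sigma$: one must translate the transition relation $b\in F(a)$, which is framed in terms of allowed words of length $M+1$, into the forbidden-word description of the $M$-step subshift. This is essentially bookkeeping, but is the sole place in the argument where the finite-type hypothesis is actually used, and it should be flagged as the main substantive step in the proof.
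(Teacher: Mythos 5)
Your proposal is correct and follows essentially the same route as the paper, which establishes the lemma only by exhibiting the sliding block code $\phi$ and the transition relation $F$ on the set $X_M$ of allowed length-$M$ words; you simply carry out the verification the paper leaves implicit. The one point to adjust is the length bookkeeping: with the paper's convention that an $M$-step shift is cut out by forbidden words of length $M$ (Definition 4.6), it suffices that every length-$M$ window of the reconstructed sequence is one of the blocks $a^{(j)}\in X_M$ --- the relation $b\in F(a)$ as written only demands $b\in X_M$ together with the overlap condition, not allowedness of the concatenated length-$(M+1)$ word, so your final step should invoke allowed windows of length $M$ rather than $M+1$.
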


Let $X$ be a compact totally disconnected Hausdorff space. Recall that $\mathcal{P}art(X)$ denotes the collection of all finite clopen partitions of $X$, which forms a cofinal subset of $\mathcal{FOC}(X)$. We regard each $\mathcal{U} \in \mathcal{P}art(X)$ as a finite alphabet, and endow it with the discrete topology.

If $\mathcal{V} \in \mathcal{P}art(X)$ is a refinement of $\mathcal{U}$ (denoted $\mathcal{U} \preceq \mathcal{V}$), we define the canonical projection $\pi_{\mathcal{U}}^{\mathcal{V}} \colon \mathcal{V} \to \mathcal{U}$ by letting $\pi_{\mathcal{U}}^{\mathcal{V}}(V)$ be the unique element $U \in \mathcal{U}$ such that $V \subseteq U$. Since the bonding maps $\pi_{\mathcal{U}}^{\mathcal{V}}$ are surjective, we have the following observation:

\begin{lem}\label{lem4.9}
    The inverse system $\{(\mathcal{U}, \pi_{\mathcal{U}}^{\mathcal{V}} )_{\mathcal{U} \in \mathcal{P}(X)}\}$ satisfies the Mittag-Leffler condition.
\end{lem}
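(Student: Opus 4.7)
The plan is to observe that the lemma reduces to a single soft remark: every bonding map in this inverse system is surjective, and surjective bonding maps trivialise the Mittag--Leffler condition. No dynamics enters at all; the statement is purely about the combinatorics of clopen refinements.

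Step one is to fix $\mathcal{U} \in \mathcal{P}art(X)$ and check surjectivity of $\iota^{\mathcal{V}}_{\mathcal{U}}\colon \mathcal{V} \to \mathcal{U}$ for every $\mathcal{V} \geq \mathcal{U}$. Given $U \in \mathcal{U}$, the set $U$ is nonempty (it is an element of a cover), so we may pick $x \in U$; since $\mathcal{V}$ covers $X$, there exists $V \in \mathcal{V}$ with $x \in V$. Working in a compact totally disconnected Hausdorff space, we may (and do) refine any finite clopen cover to a clopen partition, so elements of $\mathcal{P}art(X)$ may be taken as disjoint; with that convention, $V \cap U \neq \emptyset$ together with $V$ being contained in some element of $\mathcal{U}$ forces $V \subseteq U$, hence $\iota^{\mathcal{V}}_{\mathcal{U}}(V) = U$. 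Therefore every $U \in \mathcal{U}$ is hit, and $\iota^{\mathcal{V}}_{\mathcal{U}}$ is surjective.

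Step two is to derive the Mittag--Leffler condition. Take the witness $\gamma := \mathcal{U}$ itself, so that $\iota^{\gamma}_{\mathcal{U}}$ is the identity and $\iota^{\gamma}_{\mathcal{U}}(\gamma) = \mathcal{U}$. For any $\eta \geq \gamma$, Step one yields
\[
\iota^{\eta}_{\mathcal{U}}(\eta) \;=\; \mathcal{U} \;=\; \iota^{\gamma}_{\mathcal{U}}(\gamma),
\]
which is exactly the definition of ML for the inverse system $\{(\iota,\mathcal{U})_{\mathcal{U}\in \mathcal{P}art(X)}\}$.

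The only potential obstacle is the set-theoretic subtlety that, on a general clopen cover with overlaps, the map $\iota(V) = U$ requires a choice when $V$ sits inside several members of $\mathcal{U}$, and surjectivity of such a choice is not automatic. This is precisely why one either interprets $\mathcal{P}art(X)$ as clopen \emph{partitions} (the suggestive notation) or, equivalently, replaces each clopen cover by a canonical clopen partition refining it using total disconnectedness. Once either convention is fixed, the lemma is immediate as above.
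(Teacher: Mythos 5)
Your proposal is correct and follows essentially the same route as the paper, which proves the lemma by the single observation that each bonding map $\iota:\mathcal{V}\to\mathcal{U}$ is surjective (so the ML condition holds trivially with $\gamma=\mathcal{U}$ as its own witness). Your additional remark about needing disjointness for $\iota$ to be well-defined is consistent with the paper's actual usage of $\mathcal{P}art(X)$ as clopen partitions.
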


Now, let $(X, f)$ be a multivalued dynamical system. For each $\mathcal{U} \in \mathcal{P}(X)$, we define the induced multivalued map $f_\mathcal{U} \colon \mathcal{U} \to 2^\mathcal{U}$ by
\[
    f_\mathcal{U}(U) = \left\{V \in \mathcal{U} \;\middle|\; f(U) \cap V \neq \emptyset\right\}.
\]

\begin{lem}\label{lem5.10}
    If $\mathcal{U} \preceq \mathcal{V}$, then the map $\pi^{\mathcal{V}}_{\mathcal{U}} \colon (\mathcal{V}, f_\mathcal{V}) \to (\mathcal{U}, f_\mathcal{U})$ is a subfactor map in the sense of Definition~\ref{def2.10}. Consequently, $\pi^{\mathcal{V}}_{\mathcal{U}}$ satisfies Condition~\eqref{eq1.11}.
\end{lem}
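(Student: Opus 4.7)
The statement is essentially a definition-unpacking exercise, so the plan is to expose what each side of the subfactor inclusion means and then read off the inclusion from a single witness point. Let me split the argument into three easy steps.

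First, I would address the continuity and surjectivity of $\iota$. Since both $\mathcal{U}$ and $\mathcal{V}$ carry the discrete topology and are finite sets, any function between them is continuous, so nothing needs to be checked there. For surjectivity, given any $U\in\mathcal{U}$, I would pick a point $x\in U$ (which is non-empty as a member of a cover); since $\mathcal{V}$ covers $X$, there is some $V\in\mathcal{V}$ with $x\in V$, and because $\mathcal{V}$ refines $\mathcal{U}$, this $V$ lies in some element of $\mathcal{U}$, which by the standing convention of working with clopen partitions must be exactly $U$, giving $\iota(V)=U$.

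Next comes the core subfactor inclusion
\[
\iota(F_\mathcal{V}(V))\subseteq F_\mathcal{U}(\iota(V))\quad\text{for all }V\in\mathcal{V}.
\]
Fix $V\in\mathcal{V}$ and $W\in F_\mathcal{V}(V)$, so by definition $F(V)\cap W\ne\emptyset$. Pick witnesses $x\in V$ and $y\in W$ with $y\in F(x)$. The whole point is that $V\subseteq \iota(V)$ and $W\subseteq \iota(W)$ by the very definition of $\iota$, so $x\in\iota(V)$ and $y\in\iota(W)$, whence $y\in F(\iota(V))\cap\iota(W)$. This shows $\iota(W)\in F_\mathcal{U}(\iota(V))$, giving exactly the inclusion we need. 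The conclusion that $\iota$ satisfies Condition \eqref{eq1.11} is then immediate from the definition of a subfactor map in Definition \ref{def2.10}.

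I do not foresee any real obstacle here; the subtlety, if any, is only notational — one must be careful that $\iota$ is well-defined, which is the reason for tacitly using clopen partitions rather than arbitrary clopen covers in $\mathcal{P}art(X)$, since in a compact totally disconnected Hausdorff space every finite clopen cover can be refined to a clopen partition and this cofinal subfamily is what is implicitly used in Lemma \ref{lem4.9} and in the preceding construction of $F_\mathcal{U}$.
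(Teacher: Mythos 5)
Your proof is correct and follows the only natural route: the paper states Lemma \ref{lem5.10} without proof, treating it as immediate, and your argument (continuity is automatic on finite discrete spaces, surjectivity from refinement of partitions, and the inclusion $\iota(F_{\mathcal{V}}(V))\subseteq F_{\mathcal{U}}(\iota(V))$ read off from a single witness $y\in F(x)\cap W$ with $x\in V\subseteq\iota(V)$ and $W\subseteq\iota(W)$) fills that gap exactly as intended. Your remark that well-definedness of $\iota$ tacitly requires the elements of $\mathcal{P}art(X)$ to be partitions rather than arbitrary clopen covers is a fair and accurate reading of the paper's conventions.
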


Recall that the graph $G_f$ of a multivalued map $f \colon X \to 2^X$ is defined by 
\[
    G_f= \{(x,y) \in X \times X \mid y \in f(x)\}.
\]
Let $(X,f)$ be a multivalued system on a totally disconnected space and let $\{\pi^\mathcal{V}_\mathcal{U}, (\mathcal{U}, f_{\mathcal{U}})\}$ be the associated inverse system. We define the inverse limit space
\[
    X_{\mathcal{P}art(X)} = \left\{ (U_{\mathcal{U}}) \in \prod_{\mathcal{U} \in \mathcal{P}art(X)} \mathcal{U} \;\middle|\; \pi^\mathcal{V}_{\mathcal{U}}(U_\mathcal{V}) = U_\mathcal{U} \text{ whenever } \mathcal{U} \preceq \mathcal{V} \right\}
\]
and the limit graph
\[
    G_{f_{\mathcal{P}art(X)}} = \left\{ (\mathbf{u}, \mathbf{v}) \in X_{\mathcal{P}art(X)} \times X_{\mathcal{P}art(X)} \;\middle|\; (u_\mathcal{U}, v_\mathcal{U}) \in G_{f_\mathcal{U}} \text{ for all } \mathcal{U} \in \mathcal{P}art(X) \right\},
\]
where $\mathbf{u}=(u_\mathcal{U})$ and $\mathbf{v}=(v_\mathcal{U})$. Here, each $\mathcal{U}$ is endowed with the discrete topology, and $X_{\mathcal{P}art(X)}$ inherits the product topology.

\begin{thm}\label{thm4.11}
    Let $(X,f)$ be a multivalued system on a compact totally disconnected Hausdorff space $X$. Then the following hold:
    \begin{enumerate}
        \item $(X_{\mathcal{P}art(X)}, f_{\mathcal{P}art(X)})$ is an upper semicontinuous multivalued system;
        \item $(X,f)$ is topologically conjugate to $(X_{\mathcal{P}art(X)}, f_{\mathcal{P}art(X)})$.
    \end{enumerate}
\end{thm}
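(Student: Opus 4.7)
The plan is to obtain part (1) immediately from the general inverse-limit machinery of Section 2 and to construct the conjugacy in part (2) by hand via the canonical assignment sending each point to its location in every partition. Throughout one harmlessly restricts $\mathcal{P}art(X)$ to finite clopen \emph{partitions}, which are cofinal among finite clopen covers by mutual refinement, so that $\iota:\mathcal{V}\to\mathcal{U}$ is unambiguous when $\mathcal{U}\leq\mathcal{V}$. Then $(X_{\mathcal{P}art(X)},F_{\mathcal{P}art(X)})$ is literally the inverse limit set-valued system $(\underset{\longleftarrow}{\lim}\{\iota,\mathcal{U}\},\underset{\longleftarrow}{\lim}F_\mathcal{U})$ of Definition \ref{def1.4}: each $F_\mathcal{U}$ is upper semicontinuous because $\mathcal{U}$ is discrete, Lemma \ref{lem5.10} says every $\iota$ satisfies Condition \eqref{eq1.11}, and Lemma \ref{lem4.9} supplies the Mittag-Leffler condition, so Theorem \ref{lem1.12} yields the upper semicontinuity of $F_{\mathcal{P}art(X)}$ asserted in part (1).

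For part (2), define $h:X\to X_{\mathcal{P}art(X)}$ by $h(x)=(U_\mathcal{U}(x))_{\mathcal{U}\in\mathcal{P}art(X)}$, where $U_\mathcal{U}(x)$ is the unique member of $\mathcal{U}$ containing $x$. Compatibility with the bonding maps is immediate since $\mathcal{U}\leq\mathcal{V}$ implies $U_\mathcal{V}(x)\subset U_\mathcal{U}(x)$, so $h$ lands in $X_{\mathcal{P}art(X)}$. Each coordinate $\pi_\mathcal{U}\circ h$ is constant equal to $U$ on the clopen set $U\in\mathcal{U}$, so $h$ is continuous into the product of discrete spaces. Injectivity follows from total disconnectedness (separate two distinct points by a two-element clopen partition). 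For surjectivity, given $(U_\mathcal{U})\in X_{\mathcal{P}art(X)}$ and finitely many partitions $\mathcal{U}_1,\dots,\mathcal{U}_n$, any common refinement $\mathcal{V}$ satisfies $U_\mathcal{V}\subset\bigcap_i U_{\mathcal{U}_i}$, so $\{U_\mathcal{U}\}$ has the finite intersection property; compactness of $X$ produces $x\in\bigcap_\mathcal{U} U_\mathcal{U}$, and then $h(x)=(U_\mathcal{U})$. A continuous bijection from a compact space to a Hausdorff space is a homeomorphism, so $h$ is one.

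The remaining step, and the only place where genuine analysis enters, is the dynamical identity $F(x)=h^{-1}(F_{\mathcal{P}art(X)}(h(x)))$, equivalently: $y\in F(x)$ if and only if $F(U_\mathcal{U}(x))\cap U_\mathcal{U}(y)\ne\emptyset$ for every $\mathcal{U}\in\mathcal{P}art(X)$. The forward direction is trivial since $y\in F(x)\subset F(U_\mathcal{U}(x))$ and $y\in U_\mathcal{U}(y)$. For the contrapositive of the converse, suppose $y\notin F(x)$; as $F(x)$ is closed by upper semicontinuity and $X$ is compact totally disconnected Hausdorff, pick disjoint clopens $A\supset F(x)$ and $B\ni y$, and use upper semicontinuity to find a clopen neighborhood $V$ of $x$ with $F(V)\subset A$. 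If $x\notin B$, replace $V$ by $V\setminus B$; if $x\in B$, split $B=B_1\sqcup B_2$ into clopens with $x\in B_1$, $y\in B_2$, and replace $V$ by $V\cap B_1$ and $B$ by $B_2$. Either way the resulting clopen neighborhood $V'$ of $x$ satisfies $F(V')\subset A$, which is disjoint from the clopen $B\ni y$, so the partition generated by $\{V',B,X\setminus(V'\cup B)\}$ witnesses $F(U_\mathcal{U}(x))\cap U_\mathcal{U}(y)=\emptyset$. The main obstacle is this clopen bookkeeping, particularly the subcase $x\in B$; once it is handled, the conjugacy identity is established and the proof is complete.
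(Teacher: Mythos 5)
Your proposal is correct and takes essentially the same route as the paper: part (1) from Lemmas \ref{lem4.9}, \ref{lem5.10} and Theorem \ref{lem1.12}, and part (2) via the canonical coordinate homeomorphism (the paper works with the inverse map $g:(U_\mathcal{U})\mapsto\bigcap U_\mathcal{U}$) together with the same upper-semicontinuity separation argument, refined to a common clopen partition, for the nontrivial inclusion. The only nit is the degenerate subcase $x=y\notin F(x)$, where your splitting of $B$ into $B_1\ni x$ and $B_2\ni y$ is impossible; but there the partition generated by $V$ and $A$ already yields $F(U_\mathcal{U}(x))\subset A$ and $U_\mathcal{U}(x)\cap A=\emptyset$, so the argument goes through unchanged.
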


\begin{proof}
    (1) This follows directly from Lemmas \ref{lem4.9} and \ref{lem5.10}, and Theorem \ref{lem1.12}.
    
    (2) We define a map $g \colon X_{\mathcal{P}art(X)} \to X$ by
    \[
        (U_\mathcal{U})_{\mathcal{U}} \mapsto \bigcap_{\mathcal{U} \in \mathcal{P}(X)} U_\mathcal{U}.
    \]
    Since $X$ is a compact Hausdorff space and the elements of the partitions are clopen, it is straightforward to verify that $g$ is a continuous bijection. Since $X_{\mathcal{P}art(X)}$ is compact and $X$ is Hausdorff, $g$ is a homeomorphism.

    Let $x,y \in X$. Let $\mathbf{x} = (U_\mathcal{U}(x))$ and $\mathbf{y} = (U_\mathcal{U}(y))$ be the unique coherent sequences in $X_{\mathcal{P}art(X)}$ such that $g(\mathbf{x})=x$ and $g(\mathbf{y})=y$.
    
    First, suppose $\mathbf{y} \in f_{\mathcal{P}art(X)}(\mathbf{x})$. By definition, this implies $f(U_\mathcal{U}(x)) \cap U_\mathcal{U}(y) \neq \emptyset$ for all $\mathcal{U} \in \mathcal{P}art(X)$. Assume, for the sake of contradiction, that $y \notin f(x)$. Since $X$ is Hausdorff and $f(x)$ is closed (by upper semicontinuity), there exist disjoint open neighborhoods $V$ of $y$ and $W$ of $f(x)$, repsctively. Since $f$ is upper semicontinuous at $x$, there exists an open neighborhood $O$ of $x$ such that $f(O) \subset W$. 
    
    Since the family of clopen partitions is cofinal (forms a base for the topology), we can choose a partition $\mathcal{K} \in \mathcal{P}art(X)$ sufficiently fine such that $U_\mathcal{K}(y) \subset V$ and $U_\mathcal{K}(x) \subset O$. Consequently,
    \[
        f(U_\mathcal{K}(x)) \subset f(O) \subset W.
    \]
    Since $U_\mathcal{K}(y) \subset V$ and $V \cap W = \emptyset$, it follows that $f(U_\mathcal{K}(x)) \cap U_\mathcal{K}(y) = \emptyset$, which contradicts the assumption that $\mathbf{y} \in f_{\mathcal{P}art(X)}(\mathbf{x})$. Thus, $y\in f(x)$

    Conversely, suppose $y \in f(x)$. By Lemma \ref{lem4.9}, for every $\mathcal{U} \in \mathcal{P}art(X)$, since $y \in f(x)$ and $x \in U_\mathcal{U}(x)$, we have $f(U_\mathcal{U}(x)) \cap U_\mathcal{U}(y) \neq \emptyset$. This implies
    \[
        \mathbf{y} \in f_{\mathcal{P}art(X)}(\mathbf{x}).
    \]
    Therefore, $g$ is a multivalued conjugacy between $(X,f)$ and $(X_{\mathcal{P}art(X)}, f_{\mathcal{P}art(X)})$.
\end{proof}

\begin{cor}
    Let $X$ be a compact totally disconnected Hausdorff space and $R$ is a closed relation on $X$. Then $R$ is conjugate to an inverse limit of homomorphisms of finite directed graphs.
\end{cor}

\begin{rem}
    By Lemma \ref{lem4.7} and Theorem \ref{thm4.11}, the assumption that the inverse system of orbit systems satisfies the Mittag-Leffler condition in Theorem \ref{thm3.12} is sharp. 
\end{rem}

Conversely, we consider how multivalued systems of finite type (or graphs) generate a general compact totally disconnected Hausdorff multivalued system.

Let $\varphi$ be a graph homomorphism from $G_1=(V_1,E_1)$ to $G_2=(V_2,E_2)$. Let $f_1 \colon V_1 \to 2^{V_1}$ and $f_2 \colon V_2 \to 2^{V_2}$ be the induced multivalued systems of finite type. We define a map $g \colon V_1 \to V_2$ by $g(v) = \varphi(v)$.
Since $(u,v) \in E_1$ implies $(\varphi(u),\varphi(v)) \in E_2$, we have 
\[
    g(f_1(u)) \subseteq f_2(g(u)),
\]
which implies that $g$ satisfies Condition \eqref{eq1.11}. Hence, combining Theorem \ref{lem1.12} and Theorem \ref{thm4.11}, we obtain the following result. This result also refines Shimomura's work on single-valued zero-dimensional systems.

\begin{thm}
    $(X,f)$ is a compact totally disconnected Hausdorff multivalued system if and only if it is the inverse limit of an inverse system of multivalued systems of finite type (or graphs) satisfying the Mittag-Leffler condition. In particular, if the inverse system is countable and $+$directional, then the inverse limit is conjugate a single-valued continuous system.
\end{thm}

\section{Characterizing multivalued shadowing on totally disconnected space}\label{se6}
\subsection{Shifts generated by finite open covers}\label{subse6.1} 
For convenience, we borrow some notation from Section \ref{se:3.1} for further discussion.
Let $(X,f)$ be a multivalued system and $\mathcal{U}\in \mathcal{FOC}(X)$ and $\mathbf{U}=\{U_i\}$ a sequence of elements in $\mathcal{U}$. Recall the notion of orbit-discriminant defined in Section \ref{se:3.1}.
  \[
            [\mathbf{U}] := \bigcap_{n=0}^{\infty} [U_0, \dots, U_n] = U_0 \cap f^{-1}\left( U_1 \cap f^{-1}\left( U_2 \cap \cdots \right) \right),
        \]
if $[\mathbf{U}]$ is not empty, for each $x\in [\mathbf{U}]$ there is an $f$-orbit $(x_i)\in {\rm{Orb}}_f(X)$ with $x_0=x$ passing through $\{U_i\}$.
Let $\mathcal{U}\in \mathcal{FOC}(X)$. Define
\[
\mathcal{U}^\omega = \{ (U_i) : U_i \in \mathcal{U} \}.
\]
The space $\mathcal{U}^\omega$ inherits the product topology and forms a one-sided shift space over the alphabet $\mathcal{U}$ with the shift map \[\sigma \colon (U_i) \mapsto (U_{i+1}).\]

We denote by $\mathcal{O}(\mathcal{U}) \subset \mathcal{U}^\omega$ the closure of the set of all sequences $\mathbf{U}=(U_i)$ in $\mathcal{U}$ such that $[\mathbf{U}] \neq \emptyset$. Additionally, we define
\[
    \mathcal{PO}(\mathcal{U}) = \left\{ (U_i) \in \mathcal{U}^\omega \;\middle|\; f(U_i) \cap U_{i+1} \neq \emptyset \text{ for all } i \in \omega \right\},
\]
and
\[
    \mathcal{OC}(\mathcal{U}) = \left\{ (U_i) \in \mathcal{U}^\omega \;\middle|\; \text{there exists } x \in X \text{ such that } x \in [\overline{U_i}] \right\}.
\]

\begin{lem}\label{lem5.1}
    Let $(X, f)$ be a multivalued system and $\mathcal{U} \in \mathcal{FOC}(X)$. Then the following hold:
    \begin{enumerate}
        \item $\mathcal{O}(\mathcal{U}) \subseteq \mathcal{PO}(\mathcal{U})$;
        \item $\mathcal{O}(\mathcal{U}) = \overline{\{ (U_i) \in \mathcal{U}^\omega \mid [U_i] \neq \emptyset \}} = \bigcap_{n \in \omega} \{ (U_i) \in \mathcal{U}^\omega \mid [U_0, \dots, U_n] \neq \emptyset \}$;
        \item $(\mathcal{O}(\mathcal{U}), \sigma)$ is a subshift of $(\mathcal{U}^\omega, \sigma)$;
        \item $(\mathcal{PO}(\mathcal{U}), \sigma)$ is an $1$-step subshift of $(\mathcal{U}^\omega, \sigma)$;
        \item $(\mathcal{OC}(\mathcal{U}), \sigma)$ is a subshift of $(\mathcal{U}^\omega, \sigma)$ containing $\mathcal{O}(\mathcal{U})$.
    \end{enumerate}
\end{lem}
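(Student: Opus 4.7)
The plan is to take the five parts in an order that lets each feed into the next: first item $(4)$ to establish the basic closedness, then $(1)$ as a corollary, then $(2)$ and $(3)$ together via a cylinder/tail-modification argument, and finally $(5)$ via the upper semicontinuity of $F$.

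For $(4)$, the defining condition $F(U_i)\cap U_{i+1}\ne\emptyset$ constrains only pairs of consecutive symbols, and since $\mathcal{U}$ carries the discrete topology each cylinder $\{(U_i)\in\mathcal{U}^\omega:F(U_i)\cap U_{i+1}\ne\emptyset\}$ is clopen; their countable intersection $\mathcal{PO}(\mathcal{U})$ is therefore closed and clearly shift-invariant. The forbidden patterns have length $2$, fitting Definition 4.6 for a $1$-step shift of finite type. From this, $(1)$ follows immediately: if $[U_i]\ne\emptyset$, Lemma 3.3 produces a witness orbit $(x_j)$ with $x_j\in U_j$ for every $j$, so $x_{j+1}\in F(x_j)\cap U_{j+1}$ and $(U_i)\in\mathcal{PO}(\mathcal{U})$; closing both sides, using the closedness of $\mathcal{PO}(\mathcal{U})$, yields $\mathcal{O}(\mathcal{U})\subseteq\mathcal{PO}(\mathcal{U})$.

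For $(2)$, the leftmost equality is the definition of $\mathcal{O}(\mathcal{U})$. For the right-hand equality, set $C_n=\{(U_i)\in\mathcal{U}^\omega:[U_0,\dots,U_n]\ne\emptyset\}$; each $C_n$ depends only on finitely many coordinates, hence is clopen, and $C_{n+1}\subseteq C_n$. If $[U_i]\ne\emptyset$, Lemma 3.3 yields a point witnessing every finite prefix, so $(U_i)\in\bigcap_n C_n$; since $\bigcap_n C_n$ is closed, it contains $\mathcal{O}(\mathcal{U})$. For the reverse inclusion, fix $(U_i)\in\bigcap_n C_n$ and $N\in\omega$; pick a finite $F$-orbit $(x_0,\dots,x_N)$ with $x_j\in U_j$ via Lemma 3.3, and prolong it inductively by choosing $x_{j+1}\in F(x_j)$ for $j\geq N$, which is possible since $F$-images are non-empty; for $j>N$ select $V_j\in\mathcal{U}$ with $x_j\in V_j$, and set $V_j=U_j$ for $j\leq N$. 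Then $[V_i]\ne\emptyset$ and $(V_i)$ agrees with $(U_i)$ on the first $N+1$ coordinates, placing $(U_i)$ in the closure of $\{(W_i):[W_i]\ne\emptyset\}$. Item $(3)$ then follows: $\mathcal{O}(\mathcal{U})$ is closed (as a closure, or from the $\bigcap_n C_n$ description), and the right-hand description in $(2)$ gives shift-invariance by truncating the first coordinate of any witness orbit.

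For $(5)$, the crucial input is the equivalent characterization of upper semicontinuity: $F^{-1}(A)$ is closed whenever $A\subset X$ is closed. Iterating, each set $[\overline{U_0},\dots,\overline{U_n}]$ is closed in the compact space $X$, so $[\overline{U_i}]=\bigcap_n[\overline{U_0},\dots,\overline{U_n}]$ is non-empty iff every finite intersection is, by the finite intersection property. Setting $D_n=\{(U_i)\in\mathcal{U}^\omega:[\overline{U_0},\dots,\overline{U_n}]\ne\emptyset\}$, each $D_n$ is clopen and $\mathcal{OC}(\mathcal{U})=\bigcap_n D_n$ is closed; shift-invariance follows by truncation as in $(3)$. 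The inclusion $\mathcal{O}(\mathcal{U})\subseteq\mathcal{OC}(\mathcal{U})$ reduces to $\{(U_i):[U_i]\ne\emptyset\}\subseteq\mathcal{OC}(\mathcal{U})$, which holds since $U_i\subseteq\overline{U_i}$, followed by closing.

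The main obstacle I anticipate is the reverse inclusion in $(2)$: one must show that every sequence admitting only finite-orbit witnesses at each depth can be perturbed in an arbitrarily short tail into a sequence admitting a genuine infinite witness; the construction above depends on $\mathcal{U}$ being an open cover of $X$ (so the extension points $x_j$ lie in some $V_j\in\mathcal{U}$) and on $F$-images being non-empty, both of which are in force. A secondary subtle point is the convention for \emph{subshift}: the paper's definition requires $\sigma(X)=X$, while the arguments above produce only closed shift-invariant sets; I treat \emph{subshift} in the common closed-and-shift-invariant sense, and shall note that $\sigma$-surjectivity would require an additional assumption on $F$.
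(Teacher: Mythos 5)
Your proposal is correct, and it is considerably more detailed than the paper's own treatment, which simply declares that (1)--(4) ``follow directly from the definitions'' and only argues (5). For (5) your route genuinely differs from the paper's: the paper takes a convergent sequence of patterns in $\mathcal{OC}(\mathcal{U})$, lifts each term to a witness orbit in ${\rm{Orb}}_F(X)$, and passes to a limit orbit using compactness of the orbit space via the Closed Graph Theorem; you instead work entirely in $X$, writing $\mathcal{OC}(\mathcal{U})$ as a countable intersection of clopen cylinder sets $D_n$ and invoking the finite intersection property for the nested compact sets $[\overline{U_0},\dots,\overline{U_n}]$. Your version is arguably cleaner --- it also sidesteps the paper's slight abuse in asserting that the sequence of witness orbits ``converges'' rather than merely admits a convergent subnet --- but note that passing from $\bigcap_n[\overline{U_0},\dots,\overline{U_n}]\neq\emptyset$ to the existence of an infinite witness orbit requires applying the finite intersection property recursively (at each step one intersects the compact set $F(x_j)$ with the nested nonempty compact tails $[\overline{U_{j+1}},\dots,\overline{U_n}]$), exactly as in the Remark following Lemma \ref{lem3.3}; a one-line pointer there would close this. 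Your closing caveat that $\sigma$-surjectivity can fail, so that these sets are subshifts only in the closed-and-forward-invariant sense, is a fair criticism of the paper's stated definition of subshift rather than a defect of your argument, since the paper tacitly uses the weaker notion throughout.
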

\begin{proof}
    Statements 1.-- 4. follow directly from the definitions.

    5. Let $\{ (U_i^{(j)}) \}_{j\in\omega}$ be a sequence in $\mathcal{OC}(\mathcal{U})$ converging to a point $(W_i) \in \mathcal{U}^\omega$. For each $j\in\omega$, there exists an orbit $(z_i^{(j)}) \in \operatorname{Orb}_f(X)$ such that $z_i^{(j)} \in \overline{U_i^{(j)}}$ for each $i\in\omega$. By closed graph theorem, passing to a subsequence if necessary, we may assume that $\{ (z_i^{(j)}) \}$ converges to some point $(z_i') \in \operatorname{Orb}_f(X)$. 
    
    Fix an index $i \in \omega$. Since $\{ (U_i^{(j)}) \}$ converges to $(W_i)$ in the product topology (where $\mathcal{U}$ is discrete), we have $U_i^{(j)} = W_i$ for all sufficiently large $j$. Consequently, $z_i^{(j)} \in \overline{W_i}$ for such $j$. Since $\overline{W_i}$ is closed and $z_i^{(j)}$ converges to $z_i'$, it follows that $z_i' \in \overline{W_i}$. 
    
    This implies that $(\overline{W_i})$ tracks the orbit $(z_i')$, and thus $(W_i) \in \mathcal{OC}(\mathcal{U})$. Therefore, $\mathcal{OC}(\mathcal{U})$ is closed.
    
    By a similar argument, it is straightforward to verify that $\mathcal{O}(\mathcal{U}) \subseteq \mathcal{OC}(\mathcal{U})$.
\end{proof}

Let $X$ be a compact Hausdorff space (not necessarily metrizable). For any $\mathcal{U}, \mathcal{V} \in \mathcal{FOC}(X)$, recall that $\mathcal{V} \preceq \mathcal{U}$ if $\mathcal{U}$ refines $\mathcal{V}$. Then $(\mathcal{FOC}(X), \preceq)$ is a directed set. By the Axiom of Choice, we have the following statement.
\begin{thm}\label{thm5.2}
    Let $f \colon X \to 2^X$ be a multivalued map on a compact Hausdorff space $X$, let $\{\mathcal{U}_\lambda\}_{\lambda \in \Lambda}$ be a cofinal directed subset of $\mathcal{FOC}(X)$. Then the following hold:
    \begin{enumerate}
        \item For each $x \in X$, there exists a family $\{U_\lambda(x)\}_{\lambda \in \Lambda}$ with $U_\lambda(x) \in \mathcal{U}_\lambda$ such that $\{x\} = \bigcap_{\lambda \in \Lambda} U_\lambda(x)$. Moreover, for any such family, we have
        \[
            f^n(x) = \bigcap_{\lambda \in \Lambda} \pi_0 \left( \sigma^n \left( \mathcal{O}(\mathcal{U}_\lambda) \cap \pi_0^{-1}(U_\lambda(x)) \right) \right)
        \]
        for all $n \in \omega$.
        \item For any $f$-orbit $(x_i)$, there exists a family of sequences $\{(U_i^\lambda)\}_{\lambda \in \Lambda}$ with $U_i^\lambda \in \mathcal{U}_\lambda$ such that $(U_i^\lambda)$ is an orbit pattern of $(x_i)$ for each $\lambda \in \Lambda$, and we have
        \[
            x_n = \bigcap_{\lambda \in \Lambda} \pi_0 \left( \sigma^n \left( (U_i^\lambda) \right) \right).
        \]
    \end{enumerate}
\end{thm}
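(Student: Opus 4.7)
The plan is to first establish a selection lemma, then deduce (1) by double inclusion, then extract (2) as an easy corollary. The \emph{selection lemma} says that for each $x\in X$, any AoC-produced net $(U_\lambda(x))_{\lambda\in\Lambda}$ with $x\in U_\lambda(x)\in\mathcal{U}_\lambda$ satisfies $\bigcap_{\lambda}U_\lambda(x)=\{x\}$ and, more strongly, is \emph{eventually small}: for every open neighborhood $W$ of $x$ we have $U_\lambda(x)\subset W$ for all sufficiently large $\lambda$. The argument uses that compact Hausdorff implies regular, so for any $y\ne x$ (respectively, for any open $W\ni x$) one picks open $U'\subset\overline{U'}\subset U$ separating $x$ from $y$ (resp., with $\overline{U'}\subset W$); the cover $\{U,X\setminus\overline{U'}\}$ then has the property that every one of its members containing $x$ is forced into $U$. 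By cofinality some $\mathcal{U}_{\lambda_0}$ refines this cover, and refinement forces every element of $\mathcal{U}_\lambda$ (for $\lambda\geq\lambda_0$) containing $x$ to sit inside $U$, hence miss $y$ or lie inside $W$ as required.

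For (1), I would argue by double inclusion, interpreting $\pi_0\sigma^n(A)$ as the union in $X$ of the cover elements at position $n$ taken over all sequences in $A$. The inclusion $F^n(x)\subset\bigcap_\lambda\pi_0\sigma^n(\mathcal{O}(\mathcal{U}_\lambda)\cap\pi_0^{-1}(U_\lambda(x)))$ is direct: given $y\in F^n(x)$, extend a finite orbit from $x$ to $y$ to an infinite $F$-orbit $(x_i)$ (using $F(z)\ne\emptyset$), choose $U_i^\lambda\in\mathcal{U}_\lambda$ with $x_i\in U_i^\lambda$ and $U_0^\lambda=U_\lambda(x)$, and note by Lemma \ref{lem3.3} and Lemma \ref{lem5.1}(2) that $(U_i^\lambda)\in\mathcal{O}(\mathcal{U}_\lambda)\cap\pi_0^{-1}(U_\lambda(x))$ with $y\in U_n^\lambda$. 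For the converse, fix $y$ in the intersection; for each $\lambda$ pick $(U_i^\lambda)\in\mathcal{O}(\mathcal{U}_\lambda)$ with $U_0^\lambda=U_\lambda(x)$ and $y\in U_n^\lambda$, then invoke Lemma \ref{lem5.1}(2) and Lemma \ref{lem3.3} to extract a finite $F$-orbit $(x_0^\lambda,\ldots,x_n^\lambda)$ with $x_i^\lambda\in U_i^\lambda$. The selection lemma applied to $x$ yields $x_0^\lambda\to x$; applied to $y$ (since $y\in U_n^\lambda\in\mathcal{U}_\lambda$, so refinement forces $U_n^\lambda$ into any prescribed neighborhood of $y$) it yields $x_n^\lambda\to y$. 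Compactness of $X^{n+1}$ gives a subnet convergent to some $(z_0,\ldots,z_n)$ with $z_0=x$ and $z_n=y$, and the Closed Graph Theorem \ref{thm1.1} promotes consecutive limits to $(z_i,z_{i+1})\in Gr(F)$, producing the required orbit and so $y\in F^n(x)$.

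Finally, (2) follows directly: given an $F$-orbit $(x_i)$, pick $U_i^\lambda\in\mathcal{U}_\lambda$ containing $x_i$ for every $i,\lambda$ (AoC); Lemma \ref{lem3.3} together with Lemma \ref{lem5.1}(2) gives $(U_i^\lambda)\in\mathcal{O}(\mathcal{U}_\lambda)$ with orbit pattern realized by $(x_i)$, and then $\bigcap_\lambda\pi_0\sigma^n((U_i^\lambda))=\bigcap_\lambda U_n^\lambda=\{x_n\}$ by the selection lemma applied to $x_n$. The main technical obstacle, as I see it, is the convergence $x_n^\lambda\to y$ in the $\subset$ direction of (1): the element $U_n^\lambda$ is not of our choosing (it is only constrained to contain $y$ and to belong to $\mathcal{U}_\lambda$), so one must argue via the cofinality-plus-regularity scheme that \emph{any} element of $\mathcal{U}_\lambda$ containing $y$ eventually lies in any prescribed neighborhood of $y$.
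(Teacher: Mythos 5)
Your proposal is correct, and while it follows the same skeleton as the paper (singleton intersections of chosen cover elements, double inclusion for (1), then (2) as an immediate consequence), the hard inclusion $\bigcap_\lambda\pi_0(\sigma^n(\cdots))\subset F^n(x)$ is handled by a genuinely different mechanism. The paper argues by contradiction from upper semicontinuity of $F^n$: if $y\notin F^n(x)$ it produces $\lambda$ with $U_\lambda(y)\cap F^n(U_\lambda(x))=\emptyset$ and then claims the orbit pattern witnessing $y\in\pi_0(\sigma^n(\cdots))$ has $U_n^\lambda=U_\lambda(y)$ --- a step that is not literally justified, since $\mathcal{U}_\lambda$ is only a cover and the pattern element containing $y$ need not be the chosen one. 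You instead extract, for each $\lambda$, a finite orbit $(x_0^\lambda,\dots,x_n^\lambda)$ threading the nonempty discriminant $[U_0^\lambda,\dots,U_n^\lambda]$, pass to a convergent subnet in $X^{n+1}$, and close up with Theorem \ref{thm1.1}; the convergence $x_0^\lambda\to x$ and $x_n^\lambda\to y$ is supplied by your strengthened selection lemma (every element of $\mathcal{U}_\lambda$ containing a given point is eventually inside any prescribed neighborhood, by regularity plus cofinality). This is exactly the point the paper glosses over, and your version both repairs it and replaces the semicontinuity-of-$F^n$ argument with a compactness-plus-closed-graph argument; the two routes are of comparable length, but yours is the more carefully quantified one, while the paper's (once the $U_n^\lambda$ versus $U_\lambda(y)$ issue is patched by the same eventual-smallness observation) avoids the subnet extraction. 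The treatment of the easy inclusion and of part (2) coincides with the paper's.
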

\begin{proof}
  1. Let $\Lambda=\mathcal{FOC}(X)$ and $\{\mathcal{U}_\lambda:\mathcal{U}_\lambda\in \mathcal{FOC}(X)\}$ be given. For each $x\in X$ and $\lambda\in \Lambda$, choose $U_\lambda(x)\in \mathcal{U}_\lambda$ with $x\in {U}_\lambda(x)$. Then $x\in \bigcap U_\lambda(x)$ and $\bigcap U_\lambda(x)$ is a singleton.

   On the one hand,  given $x_n\in f^n(U_\lambda(x))$, there is a finite $f$-orbit $(x_0,x_1,\cdots,x_n)$ with $x_0\in U_\lambda(x)$, suppose $x_i\in U_i^\lambda$ for $1\leq j\leq n$, then $(x_0,x_1,\cdots,x_n)$ has $(U_\lambda(x),U_1^\lambda,\cdots,U_n^\lambda)$-orbit pattern, and extend the orbit by filling points such that $x_{j+1}\in f(x_j)$ for all $j\geq n$, Suppose $x_j\in U^\lambda_j$ for each $j\geq n+1$, then we obtain an $f$-orbit $(x_0,x_1,\cdots,x_n,\cdots)$ with $(U_\lambda(x),U_1^\lambda,\cdots,U_n^\lambda,\cdots)$-orbit pattern.  Hence, 
   \begin{align}\label{eqqq6.1}
       x_n\in \pi_0\left(\sigma^n\left(\mathcal{O}(\mathcal{U}_\lambda)\cap \pi^{-1}_0(U_\lambda(x))\right)\right).
   \end{align}

    Now, suppose that $y\in \bigcap\pi_0\left(\sigma^n\left(\mathcal{O}(\mathcal{U}_\lambda)\cap \pi^{-1}_0(U_\lambda(x))\right)\right)$, but $y\notin f^n(x)$. As $f$ is upper semicontinuous (so is $f^n$), there is $\lambda\in\Lambda$ such that
    \[U_\lambda(y)\cap f^n(U_\lambda(x))=\emptyset.\]
    But $U_\lambda(y)\cap \pi_0\left(\sigma^n\left(\mathcal{O}(\mathcal{U}_\lambda)\cap \pi^{-1}_0(U_\lambda(x))\right)\right)\ne\emptyset$, hence, there is an $(U_i^\lambda)$-orbit pattern with $U_n^\lambda=U_\lambda(y)$, which means that 
    \[U_\lambda(y)\bigcap \left(f^n([U_\lambda(x),U_1^\lambda,\cdots,U_n^\lambda])\subset f^n(U_\lambda(x))\right)\ne\emptyset,\]
    which is a contradiction, so $y\in f^n(x)$. Thus, \[\bigcap\pi_0\left(\sigma^n\left(\mathcal{O}(\mathcal{U}_\lambda)\cap \pi^{-1}_0(U_\lambda(x))\right)\right)\subset f^n(x).\] So, we have 
    \[f^n(x)=f^n\left(\bigcap U_\lambda(x)\right)\subset\bigcap f^n\left(U_\lambda(x)\right)\overset{\eqref{eqqq6.1}}{\subset} \bigcap\pi_0\left(\sigma^n\left(\mathcal{O}(\mathcal{U}_\lambda)\cap \pi^{-1}_0(U_\lambda(x))\right)\right).\]
   Hence, 
   \[f^n(x)=\bigcap\pi_0\left(\sigma^n\left(\mathcal{O}(\mathcal{U}_\lambda)\cap \pi^{-1}_0(U_\lambda(x))\right)\right).\]

    2. It is easy to see that \[x_n=\Pi_0(\sigma_f^n((x_i)))\in \bigcap\pi_0\left(\sigma^n\left((U^\lambda_i)\right)\right).\]
   Conversely, suppose $z\in \bigcap\pi_0\left(\sigma^n\left((U^\lambda_i)\right)\right)$. Then 
    \[z\in \bigcap U_n^\lambda,\]
    but $\bigcap U_n^\lambda$ contains only one point. Thus, $z=x_n$.
\end{proof}

\subsection{Multivalued shadowing property on totally disconnected spaces}
In this subsection, we consider the multivalued shadowing property on a compact totally disconnected Hausdorff space $X$ (not necessarily metrizable or expansive).

Since each $\mathcal{U} \in \mathcal{P}art(X)$ consists of clopen subsets of $X$, we have
\[
    \mathcal{O}(\mathcal{U}) = \{ (U_i) \in \mathcal{U}^\omega : [U_i] \neq \emptyset \}.
\]
Let $\mathcal{U}, \mathcal{V} \in \mathcal{P}art(X)$ with $\mathcal{U} \preceq \mathcal{V}$. The map $\pi_\mathcal{U}^\mathcal{V} \colon \mathcal{V} \to \mathcal{U}$ naturally induces a continuous map \[\pi_\mathcal{U}^\mathcal{V} \colon \mathcal{V}^\omega \to \mathcal{U}^\omega\] defined by \[\pi^\mathcal{V}_\mathcal{U}((V_i)) = (\pi^\mathcal{V}_\mathcal{U}(V_i)) = (U_i)\quad (\text{where } V_i \subset U_i).\] Thus, from the tracking orbit method (see Figure \ref{F1}), we obtain,
\begin{lem}\label{lem5.3}{\rm{(cf. \cite{GM})}}
    Let $f:X\to 2^X$ be a multivalued map on compact totally disconnected Hausdorff space $X$ and let $\mathcal{U},\mathcal{V}\in \mathcal{P}art(X)$ with $\mathcal{U}\preceq \mathcal{V}$. Then the shift action $\sigma$ commutes with $\pi^\mathcal{V}_\mathcal{U}$, and 
    \begin{enumerate}
        \item $\pi^\mathcal{V}_\mathcal{U}(\mathcal{O}(\mathcal{V}))=\mathcal{O}(\mathcal{U})$;
        \item $\mathcal{O}(\mathcal{U})\subset \pi^\mathcal{V}_\mathcal{U}(\mathcal{PO}(\mathcal{V}))\subset \mathcal{PO}(\mathcal{U})$.
        \item $\mathcal{OC}(\mathcal{U})=\mathcal{O}(\mathcal{U}).$
    \end{enumerate}
\end{lem}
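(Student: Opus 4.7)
\medskip

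The plan is to verify each of the three parts by unpacking definitions, with the partition/clopen hypothesis doing the main work in (3) and playing the role of compactness witness in (1). Before anything, I will note that $\sigma$ and $\iota$ commute on $\mathcal{V}^\omega$: since $\iota$ acts coordinate-wise, both $\sigma \circ \iota$ and $\iota \circ \sigma$ send $(V_i)$ to $(\iota(V_{i+1}))$. Throughout I will use the fact, pointed out just before the lemma, that when $\mathcal{U}$ is a finite clopen partition, $\mathcal{O}(\mathcal{U}) = \{(U_i) \in \mathcal{U}^\omega : [U_i] \neq \emptyset\}$; this holds because the sets $[U_0,\dots,U_n]$ are compact and nested, so nonemptiness at every finite level forces nonemptiness of the intersection.

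For part (1), the inclusion $\iota(\mathcal{O}(\mathcal{V})) \subseteq \mathcal{O}(\mathcal{U})$ is immediate: if $(V_i) \in \mathcal{O}(\mathcal{V})$ is witnessed by an $F$-orbit $(x_i)$ with $x_i \in V_i$, then $x_i \in \iota(V_i)$ as well, so $\iota((V_i)) \in \mathcal{O}(\mathcal{U})$. Conversely, given $(U_i) \in \mathcal{O}(\mathcal{U})$ with witness $(x_i)$, I define $V_i$ to be the unique member of the partition $\mathcal{V}$ containing $x_i$; then $V_i \subseteq U_i$, so $\iota(V_i) = U_i$, and the same orbit $(x_i)$ witnesses $[V_i] \neq \emptyset$, giving $(V_i) \in \mathcal{O}(\mathcal{V})$ and $\iota((V_i)) = (U_i)$.

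For part (2), the first inclusion is immediate from (1) together with Lemma \ref{lem5.1}(1): $\mathcal{O}(\mathcal{U}) = \iota(\mathcal{O}(\mathcal{V})) \subseteq \iota(\mathcal{PO}(\mathcal{V}))$. For the second, take $(V_i) \in \mathcal{PO}(\mathcal{V})$, so $F(V_i) \cap V_{i+1} \neq \emptyset$ for every $i$. Since $V_i \subseteq \iota(V_i)$, monotonicity of $F$ on sets yields $F(\iota(V_i)) \supseteq F(V_i)$, hence
\[
F(\iota(V_i)) \cap \iota(V_{i+1}) \;\supseteq\; F(V_i) \cap V_{i+1} \;\neq\; \emptyset,
\]
which says $\iota((V_i)) \in \mathcal{PO}(\mathcal{U})$.

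For part (3), since each $U \in \mathcal{U}$ is clopen we have $\overline{U} = U$, so $[\overline{U_i}] = [U_i]$. By the definition of $\mathcal{OC}(\mathcal{U})$, $(U_i) \in \mathcal{OC}(\mathcal{U})$ iff there exists $x \in [\overline{U_i}] = [U_i]$, iff $[U_i] \neq \emptyset$, iff $(U_i) \in \mathcal{O}(\mathcal{U})$ by the partition identity recalled above. No step in the lemma presents a genuine obstacle; the only point requiring care is ensuring that when $\mathcal{U}$ is merely a finite cover (rather than a partition) $\mathcal{O}(\mathcal{U})$ would have to be taken as a closure, whereas under the standing partition hypothesis the equality $\mathcal{O}(\mathcal{U}) = \{(U_i) : [U_i]\neq\emptyset\}$ is clean, which is what makes both (1) and (3) work without any approximation argument.
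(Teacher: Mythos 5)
Your proof is correct and follows essentially the same route as the paper: part (1) is argued identically (orbit witness for one inclusion, uniqueness of the refining partition member for the other), while for (2) and (3) the paper simply defers to Lemma \ref{lem5.1} and the clopen-partition identity $\mathcal{O}(\mathcal{U})=\{(U_i):[U_i]\neq\emptyset\}$ stated just before the lemma, which are exactly the details you spell out. No gaps.
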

\begin{proof}
   It follows from the definition that $\pi^\mathcal{V}_\mathcal{U}$ and $\sigma$ commutes. 

    1. Consider $(V_i)\in \mathcal{O}(\mathcal{V})$. There is an $f$-orbit $(x_i)$ such that $(x_i)$ is $(V_i)$-orbit pattern. Then $x_i\in \pi^\mathcal{V}_\mathcal{U}(V_i)=U_i$, so that $(U_i)=(\pi^\mathcal{V}_\mathcal{U}(V_i))\in \mathcal{O}(\mathcal{U})$.

    Conversely, suppose $(x_i)$ has $(U_i)$-orbit pattern for some $(U_i)\in \mathcal{O}(\mathcal{U})$, and $x_i\in V_i$ for some $V_i\in \mathcal{V}$. Then as $\mathcal{V}$ and $\mathcal{U}$ are partition, we have $(\pi^\mathcal{V}_\mathcal{U}(V_i))=(U_i)$. Thus, 1. holds.

    Statements 2. and 3. follow from Lemma \ref{lem5.1}.
\end{proof}

\begin{thm}\label{thm6.4}
    Let $f:X\to 2^X$ be a multivalued map on compact Hausdorff totally disconnected space $X$. Let $\{\mathcal{U}_\lambda\}_{\lambda\in\Lambda}$ be a cofinal directed family of $\mathcal{P}art(X)$. Then 
    \begin{enumerate}
        \item the system $(X,f)$ is conjugate to $(X_{\mathcal{P}art(X)}, f_{\mathcal{P}art(X)})$ defined as in Section \ref{se5}.
        \item Its orbit system $({\rm{Orb}}_f(X),\sigma_f)$ is conjugate to $(\underset{\longleftarrow}{\lim}\{\mathcal{O}(\mathcal{U}_\lambda),\pi^\mathcal{V}_\mathcal{U}\},\underset{\longleftarrow}{\lim}\sigma_\lambda)$ by the map
\begin{align}\label{eq5.1}
       (\boldsymbol{U}_\lambda)_{\lambda\in \Lambda}\mapsto \left(\bigcap \pi_0\left(\sigma^i\left(\boldsymbol{U}_\lambda\right)\right)\right)_{i\in\omega}.
\end{align}
     for all $(\boldsymbol{U}_\lambda)_{\lambda\in \Lambda}\in \underset{\longleftarrow}{\lim}\{\mathcal{O}(\mathcal{U}_\lambda),\pi^\mathcal{V}_\mathcal{U}\}$.
    \end{enumerate}
\end{thm}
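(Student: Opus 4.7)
Part (1) is essentially a restatement of Theorem \ref{thm4.11}; indeed the cofinal directed subset $\{\mathcal{U}_\lambda\}$ is not actually needed for (1), but recording it in the hypothesis sets up the inverse system used in (2). For part (2), the plan is to exhibit the formula (\ref{eq5.1}) as a well-defined conjugacy
\[\Phi:\underset{\longleftarrow}{\lim}\{\mathcal{O}(\mathcal{U}_\lambda),\iota\}\longrightarrow ({\rm{Orb}}_F(X),\sigma_F),\qquad (\boldsymbol{U}_\lambda)_{\lambda}\longmapsto \Bigl(\bigcap_{\lambda}\pi_0(\sigma^i(\boldsymbol{U}_\lambda))\Bigr)_{i\in\omega},\]
and then verify bijectivity, continuity, and intertwining with the shifts.

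Writing $\boldsymbol{U}_\lambda=(U_i^\lambda)_{i\in\omega}$, the first step is to check that for each fixed $i$ the set $\bigcap_\lambda U_i^\lambda$ is a singleton $\{x_i\}$: the $\iota$-compatibility forces $U_i^\eta\subset U_i^\lambda$ whenever $\lambda\le\eta$, while cofinality of $\{\mathcal{U}_\lambda\}$ in $\mathcal{P}art(X)$ together with the Hausdorff property of $X$ collapses the nested intersection of clopen sets to one point. The second, and main, step is to show $(x_i)\in{\rm{Orb}}_F(X)$. Because each $\mathcal{U}_\lambda$ is clopen and $F$ is upper semicontinuous, $F^{-1}$ of a closed set is closed (from the closed graph Theorem \ref{thm1.1}), so every $[U_0^\lambda,\dots,U_n^\lambda]$ is closed. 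The condition $\boldsymbol{U}_\lambda\in\mathcal{O}(\mathcal{U}_\lambda)$ combined with Lemma \ref{lem5.1}(2) and the finite intersection property of compacta yields $[U_i^\lambda]\ne\emptyset$, so by Lemma \ref{lem3.3} there is an $F$-orbit $(y_j^\lambda)_j$ with $y_j^\lambda\in U_j^\lambda$ for every $j$. The net $\{(y_j^\lambda)\}_{\lambda\in\Lambda}$ lives in the compact set ${\rm{Orb}}_F(X)\subset\Pi_{i\in\omega}X$, so by Theorem \ref{thm2.17} and Theorem \ref{thm2.18} it has a cluster point $(z_j)\in{\rm{Orb}}_F(X)$ whose $j$-th coordinate is a cluster point of $\{y_j^\lambda\}$ in $X$. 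Since for any fixed $\lambda_0$ the values $y_j^\lambda$ lie eventually in the closed set $U_j^{\lambda_0}$, we obtain $z_j\in\bigcap_{\lambda_0}U_j^{\lambda_0}=\{x_j\}$, hence $(x_j)=(z_j)\in{\rm{Orb}}_F(X)$.

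Once $\Phi$ is well-defined the remaining verifications are routine. Injectivity follows because the unique element of a partition $\mathcal{U}_\lambda$ containing a given point is determined by that point. For surjectivity, given $(x_i)\in{\rm{Orb}}_F(X)$, letting $U_i^\lambda$ be the unique member of $\mathcal{U}_\lambda$ containing $x_i$ produces a point of $\mathcal{O}(\mathcal{U}_\lambda)$ (witnessed by $(x_i)$ itself via Lemma \ref{lem3.3}) which is manifestly $\iota$-compatible across $\lambda$; the image under $\Phi$ is $(x_i)$. Continuity is immediate from the inverse-limit topology on the domain and the product topology on the codomain: preimages of finite cylinders in ${\rm{Orb}}_F(X)$ are unions of basic cylinders in $\underset{\longleftarrow}{\lim}\{\mathcal{O}(\mathcal{U}_\lambda),\iota\}$. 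Because both spaces are compact Hausdorff, $\Phi$ is a homeomorphism. Finally, $\Phi\circ\underset{\longleftarrow}{\lim}\sigma_\lambda=\sigma_F\circ\Phi$ is direct from (\ref{eq5.1}) since both operations simply increment the index $i$. The subtle point is the $F$-orbit step above; the rest is bookkeeping.
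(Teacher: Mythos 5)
Your proposal is correct and follows essentially the same route as the paper: part (1) is delegated to Theorem \ref{thm4.11}, and part (2) establishes the map \eqref{eq5.1} as a conjugacy using the uniqueness of clopen orbit patterns and Theorem \ref{thm5.2}. The only difference is one of emphasis: the paper's proof leans on Theorem \ref{thm5.2} and leaves implicit the verification that an arbitrary compatible thread $(\boldsymbol{U}_\lambda)$ in the inverse limit is realized by an actual $F$-orbit, whereas you supply that step explicitly via the cluster-point argument in the compact set ${\rm{Orb}}_F(X)$ --- a worthwhile addition, but not a different proof.
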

\begin{proof}
    1. This follows directly from Theorem \ref{thm4.11}.

    2. Suppose $(x_i)$ is an $f$-orbit. Since $\mathcal{U}_\lambda$ is a clopen partition of $X$ for each $\lambda\in \Lambda$, thus, thee is only one choice $(U_i^\lambda)$ such that $x_i\in U_i^\lambda$, which means that $(x_i)$ has only one choice of $(U_i^\lambda)$-orbit pattern.  Hence, by Theorem \ref{thm5.2}, the point $\left(\bigcap \pi_0\left(\sigma^i\left(\boldsymbol{U}_\lambda\right)\right)\right)_{i\in \omega}$ is an $f$-orbit and the map \eqref{eq5.1} is a well-defined injective onto map. The continuity follows directly from Theorem \ref{thm5.2}, too. Therefore, \eqref{eq5.1} is a conjugacy.
\end{proof}

The following result shows that if the orbital space of Gambaudo and Martens‘ graph cover (refer to Remark \ref{rem3.3}) virtually coincides with the orbital space of Shimomura's graph cover, then the graph cover itself has the shadowing property (cf. \cite[Lemma 15]{GM}). 

\begin{lem}\label{lem5.5}
    Let $f \colon X \to 2^X$ be a multivalued map on a compact totally disconnected Hausdorff space $X$. Then $f$ has the multivalued shadowing property if and only if for each $\mathcal{U} \in \mathcal{P}art(X)$, there exists $\mathcal{V} \in \mathcal{P}art(X)$ finer than $\mathcal{U}$ such that for all $\mathcal{W} \in \mathcal{P}art(X)$ finer than $\mathcal{V}$, we have 
    \[
        \pi^\mathcal{W}_\mathcal{U}(\mathcal{PO}(\mathcal{W})) = \mathcal{O}(\mathcal{U}).
    \]
\end{lem}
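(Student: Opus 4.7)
The plan is to combine Lemma~\ref{lem4.11} (the open-cover characterization of shadowing) with the identity
\[
\mathcal{O}(\mathcal{U})=\{(U_i)\in\mathcal{U}^\omega:[U_i]\ne\emptyset\}
\]
valid for clopen partitions, together with Lemma~\ref{lem3.3} which turns $[U_i]\ne\emptyset$ into the existence of an honest $F$-orbit $(z_i)$ with $z_i\in U_i$. Because $\mathcal{P}art(X)$ is cofinal in $\mathcal{FOC}(X)$, I can restrict the shadowing formulation to clopen partitions throughout; and because each $\mathcal{V}\in\mathcal{P}art(X)$ is a partition, a pseudo-orbit $\{x_i\}$ has a \emph{unique} pattern $(V_i)$ with $x_i\in V_i$, producing a clean bijective correspondence between $\mathcal{V}$-pseudo-orbits and elements of $\mathcal{PO}(\mathcal{V})$.

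For the forward direction, given $\mathcal{U}\in\mathcal{P}art(X)$, I would use Lemma~\ref{lem4.11} (refining if necessary so that $\mathcal{V}$ is finer than $\mathcal{U}$, which is possible by directedness of $\mathcal{P}art(X)$) to produce $\mathcal{V}$ such that every $\mathcal{V}$-pseudo-orbit is $\mathcal{U}$-shadowed, and then pick any $\mathcal{W}\in\mathcal{P}art(X)$ refining $\mathcal{V}$. The inclusion $\mathcal{O}(\mathcal{U})\subset\iota(\mathcal{PO}(\mathcal{W}))$ is direct: an $F$-orbit $(z_i)$ witnessing $(U_i)\in\mathcal{O}(\mathcal{U})$ has a unique refined pattern $(W_i)$ in $\mathcal{W}^\omega$ with $z_i\in W_i\subset U_i$, and since $z_{i+1}\in F(z_i)\cap W_{i+1}\subset F(W_i)\cap W_{i+1}$, we get $(W_i)\in\mathcal{PO}(\mathcal{W})$ with $\iota((W_i))=(U_i)$. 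For the reverse inclusion, given $(W_i)\in\mathcal{PO}(\mathcal{W})$, I would choose $x_i\in W_i$ with $F(x_i)\cap W_{i+1}\ne\emptyset$; the sequence $\{x_i\}$ is a $\mathcal{W}$-pseudo-orbit, hence a $\mathcal{V}$-pseudo-orbit, so shadowing yields an $F$-orbit $(z_i)$ with $x_i,z_i$ lying in a common element of $\mathcal{U}$. Partition-uniqueness forces that common element to be $\iota(W_i)$, so $z_i\in\iota(W_i)$ and $(\iota(W_i))\in\mathcal{O}(\mathcal{U})$.

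The reverse direction is even cleaner: given $\mathcal{U}$, take $\mathcal{V}$ from the hypothesis and apply the hypothesis with $\mathcal{W}=\mathcal{V}$ itself. A $\mathcal{V}$-pseudo-orbit $\{x_i\}$ has a unique pattern $(V_i)\in\mathcal{PO}(\mathcal{V})$, and $(U_i):=\iota((V_i))\in\mathcal{O}(\mathcal{U})$ produces an $F$-orbit $(z_i)$ with $z_i\in U_i$; since $x_i\in V_i\subset U_i$, this orbit $\mathcal{U}$-shadows $\{x_i\}$. Lemma~\ref{lem4.11} and the cofinality of $\mathcal{P}art(X)$ then upgrade this to the full set-valued shadowing property. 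The main (and quite mild) thing to manage is the bookkeeping of the refinement maps $\iota:\mathcal{W}\to\mathcal{V}\to\mathcal{U}$, so that partition-uniqueness identifies the element of $\mathcal{U}$ containing $x_i$ with $\iota(W_i)$, and the routine passage from the pointwise condition $F(x_i)\cap W_{i+1}\ne\emptyset$ to the set-level condition $F(W_i)\cap W_{i+1}\ne\emptyset$ in the definition of $\mathcal{PO}(\mathcal{W})$. Neither poses a substantive obstacle, and beyond these the proof relies only on Lemmas~\ref{lem3.3}, \ref{lem4.11}, and the partition identity for $\mathcal{O}(\mathcal{U})$.
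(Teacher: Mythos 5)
Your proposal is correct and follows essentially the same route as the paper: both directions rest on the open-cover characterization of shadowing (Lemma \ref{lem4.11}), the uniqueness of patterns for clopen partitions, and Lemma \ref{lem3.3} to convert $[U_i]\neq\emptyset$ into an actual orbit. The only cosmetic difference is that you verify $\iota(\mathcal{PO}(\mathcal{W}))=\mathcal{O}(\mathcal{U})$ directly for each finer $\mathcal{W}$, whereas the paper establishes it for $\mathcal{V}$ and then sandwiches $\iota(\mathcal{PO}(\mathcal{W}))$ between $\mathcal{O}(\mathcal{U})$ and $\iota(\mathcal{PO}(\mathcal{V}))$ via Lemma \ref{lem5.3}.
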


\begin{proof}
    Suppose $f$ has the multivalued shadowing property. Let $\mathcal{U} \in \mathcal{P}art(X)$. Choose $\mathcal{V} \in \mathcal{P}art(X)$ with $\mathcal{U} \preceq \mathcal{V}$ such that any sequence $\{x_i\}$ with a $\mathcal{V}$-pseudo-orbit pattern is shadowed by some $f$-orbit with a $\mathcal{U}$-orbit pattern.

    Consider a $\mathcal{V}$-pseudo-orbit $\{x_i\}$ with pseudo-orbit pattern $(V_i)$. By the shadowing assumption, there exists $(z_i) \in \operatorname{Orb}_f(X)$ such that $x_i, z_i \in U_i$ for some $U_i \in \mathcal{U}$ for all $i \in \omega$. Thus, $V_i \cap U_i \neq \emptyset$. Since $\mathcal{V}$ is finer than $\mathcal{U}$ and the elements of $\mathcal{U}$ are disjoint (as it is a partition), we obtain $V_i \subseteq U_i$. Hence, $\pi^\mathcal{V}_\mathcal{U}((V_i)) = (U_i)$. This implies $\pi^\mathcal{V}_\mathcal{U}(\mathcal{PO}(\mathcal{V})) \subseteq \mathcal{O}(\mathcal{U})$. The reverse inclusion is given by Lemma \ref{lem5.3}, which implies these two sets are equal. For any $\mathcal{W} \in \mathcal{P}art(X)$ finer than $\mathcal{V}$, we have 
    \[
        \mathcal{O}(\mathcal{U}) \subseteq \pi^\mathcal{W}_\mathcal{U}(\mathcal{PO}(\mathcal{W})) \subseteq \pi^\mathcal{V}_\mathcal{U}(\mathcal{PO}(\mathcal{V})) = \mathcal{O}(\mathcal{U}).
    \]
    Therefore, equality holds.

    Conversely, suppose that $f$ has the property stated above. Let $\mathcal{U} \in \mathcal{FOC}(X)$. Since $X$ is totally disconnected, let $\mathcal{U}' \in \mathcal{P}art(X)$ be a partition finer than $\mathcal{U}$. Let $\mathcal{V} \in \mathcal{P}art(X)$ be the partition witnessing the condition with respect to $\mathcal{U}'$. Given a $\mathcal{V}$-pseudo-orbit $\{x_i\}$ with pseudo-orbit pattern $(V_i)$, we have $\pi^\mathcal{V}_{\mathcal{U}'}((V_i)) \in \pi^\mathcal{V}_{\mathcal{U}'}(\mathcal{PO}(\mathcal{V}))$. By the hypothesis, $\pi^\mathcal{V}_{\mathcal{U}'}(\mathcal{PO}(\mathcal{V})) = \mathcal{O}(\mathcal{U}')$, so there exists $(U'_i) \in \mathcal{O}(\mathcal{U}')$ such that $\pi^\mathcal{V}_{\mathcal{U}'}((V_i)) = (U'_i)$, which implies $V_i \subseteq U'_i$. 
    Let $z \in [U'_i]$ (the set of points realizing the orbit pattern). Then there exists a $z$-orbit $(z_i)$ with orbit pattern $(U_i')$. We have
    \[
        z_i, x_i \in U_i' \subseteq U_i \quad (\text{for some } U_i \in \mathcal{U}),
    \]
    which means that each $\mathcal{V}$-pseudo-orbit is $\mathcal{U}$-shadowed by some $f$-orbit. Therefore, $f$ has the multivalued shadowing property.
\end{proof}
The following procedure is nearly the same as that in \cite{GM}, but for completeness, we write it out.
\begin{thm}\label{thm5.6}
    Let $F \colon X \to 2^X$ be a multivalued map with the multivalued shadowing property on a compact totally disconnected Hausdorff space $X$. Let $\{\mathcal{U}_\lambda\}$ be a cofinal directed subset of $\mathcal{P}art(X)$. Then 
    \begin{enumerate}
        \item the system 
            $\left( \varprojlim \{\mathcal{O}(\mathcal{U}_\lambda), \pi^\eta_\lambda\}, \varprojlim \sigma_\lambda \right)$
        is conjugate to 
           $ \left( \varprojlim \{\mathcal{PO}(\mathcal{U}_\lambda), \pi^\eta_\lambda\}, \varprojlim \sigma_\lambda \right);$
        
        \item both inverse systems satisfy the Mittag-Leffler condition.
    \end{enumerate}
\end{thm}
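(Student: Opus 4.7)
The strategy is to prove (2) first, and then obtain (1) as a direct consequence of Lemma \ref{lem2.21}. The shadowing hypothesis enters only through Lemma \ref{lem5.5}; the rest is bookkeeping about directed systems and already-established compatibility of $\iota$ with the shift.

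For the $\mathcal{O}$-system, the Mittag-Leffler condition is immediate. By Lemma \ref{lem5.3}(1), whenever $\mathcal{U}_\lambda\leq \mathcal{U}_\eta$ the bonding map $\iota:\mathcal{O}(\mathcal{U}_\eta)\to \mathcal{O}(\mathcal{U}_\lambda)$ is surjective, so the stable image at every index is the full space $\mathcal{O}(\mathcal{U}_\lambda)$ and any $\eta\geq \lambda$ witnesses the condition. For the $\mathcal{PO}$-system, fix $\lambda\in \Lambda$. By shadowing combined with Lemma \ref{lem5.5}, there exists $\mathcal{V}\in\mathcal{P}art(X)$ refining $\mathcal{U}_\lambda$ such that $\iota(\mathcal{PO}(\mathcal{W}))=\mathcal{O}(\mathcal{U}_\lambda)$ for every $\mathcal{W}\in\mathcal{P}art(X)$ finer than $\mathcal{V}$. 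By cofinality of $\{\mathcal{U}_\lambda\}_{\lambda\in\Lambda}$ in $\mathcal{P}art(X)$, I pick $\gamma\in\Lambda$ with $\mathcal{U}_\gamma\geq \mathcal{V}$. Then for every $\eta\geq \gamma$ the partition $\mathcal{U}_\eta$ refines $\mathcal{V}$, so applying Lemma \ref{lem5.5} to both $\eta$ and $\gamma$ gives
\[\iota(\mathcal{PO}(\mathcal{U}_\eta))=\mathcal{O}(\mathcal{U}_\lambda)=\iota(\mathcal{PO}(\mathcal{U}_\gamma)).\]
This is exactly the Mittag-Leffler condition for $\lambda$, and it identifies the stable image of $\mathcal{PO}(\mathcal{U}_\lambda)$ as $\mathcal{O}(\mathcal{U}_\lambda)$.

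For (1), I apply Lemma \ref{lem2.21} to the $\mathcal{PO}$-system. The lemma says that replacing each $\mathcal{PO}(\mathcal{U}_\lambda)$ by its stable image and restricting the bonding maps produces the same inverse limit. By the previous paragraph the stable image at level $\lambda$ is $\mathcal{O}(\mathcal{U}_\lambda)$, and the restriction of $\iota:\mathcal{PO}(\mathcal{U}_\eta)\to \mathcal{PO}(\mathcal{U}_\lambda)$ to $\mathcal{O}(\mathcal{U}_\eta)$ coincides (as a set-map) with the bonding map $\iota:\mathcal{O}(\mathcal{U}_\eta)\to \mathcal{O}(\mathcal{U}_\lambda)$ of the $\mathcal{O}$-system. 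Hence the canonical inclusion $\underset{\longleftarrow}{\lim}\{\mathcal{O}(\mathcal{U}_\lambda),\iota\}\hookrightarrow \underset{\longleftarrow}{\lim}\{\mathcal{PO}(\mathcal{U}_\lambda),\iota\}$ is a homeomorphism, and it intertwines the two shifts because $\sigma$ commutes with every $\iota$ by Lemma \ref{lem5.3}, so both inverse-limit shifts are restrictions of the product shift on $\Pi_\lambda \mathcal{U}_\lambda^\omega$. This yields the desired conjugacy.

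The only substantive obstacle is the application of Lemma \ref{lem5.5} that rigidifies $\iota(\mathcal{PO}(\mathcal{W}))$ to the constant value $\mathcal{O}(\mathcal{U}_\lambda)$ once $\mathcal{W}$ is fine enough; that is precisely where the shadowing hypothesis is used. Everything else reduces to invoking the cofinality of $\{\mathcal{U}_\lambda\}$ in $\mathcal{P}art(X)$, the surjectivity statement of Lemma \ref{lem5.3}(1), and the stable-image description from Lemma \ref{lem2.21}.
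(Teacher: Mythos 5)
Your proof is correct, and it reorganizes the argument in a way that differs slightly but genuinely from the paper's. The paper proves the conjugacy in (1) first, by hand: it takes the inclusion-induced injection $j_*:\underset{\longleftarrow}{\lim}\{\mathcal{O}(\mathcal{U}_\lambda),\iota\}\to\underset{\longleftarrow}{\lim}\{\mathcal{PO}(\mathcal{U}_\lambda),\iota\}$, chooses a monotone selection $p:\Lambda\to\Lambda$ with $\iota(\mathcal{PO}(\mathcal{U}_{p(\lambda)}))=\mathcal{O}(\mathcal{U}_\lambda)$, builds an explicit candidate inverse $\phi((w_\gamma))_\lambda=\iota(w_{p(\lambda)})$, and verifies $j_*\circ\phi=\mathrm{id}$; only afterwards does it check the Mittag-Leffler condition, using exactly the same two inputs you use (surjectivity of $\iota$ on $\mathcal{O}$ from Lemma \ref{lem5.3}, and the shadowing-driven identity $\iota(\mathcal{PO}(\mathcal{U}_\eta))=\mathcal{O}(\mathcal{U}_\lambda)$ from Lemma \ref{lem5.5} plus cofinality). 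You invert this order: you establish ML for both systems first, observe that your computation identifies the stable image of $\mathcal{PO}(\mathcal{U}_\lambda)$ as precisely $\mathcal{O}(\mathcal{U}_\lambda)$, and then let the general stable-image statement of Lemma \ref{lem2.21} do the work of (1) — the two inverse limits are literally the same subset of the product, the bonding maps agree after restriction, and the shifts agree because both are restrictions of the coordinatewise shift. What your route buys is economy: no choice function $p$, no explicit $\phi$, and no verification that $j_*\circ\phi$ is the identity; the conjugacy is the identity map on a common underlying set. What the paper's route buys is an explicit formula for the inverse, which it does not subsequently need. Both arguments use the shadowing hypothesis in the same single place, and your appeal to cofinality to pass from the $\mathcal{V}$ produced by Lemma \ref{lem5.5} to some $\mathcal{U}_\gamma$ in the given cofinal family is exactly the step the paper performs implicitly when it "chooses $\gamma\geq\lambda$ so that $\mathcal{U}_\gamma$ witnesses $\mathcal{U}_\lambda$ shadowing."
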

\begin{proof}
    First, given $\lambda$, $\mathcal{O}(\mathcal{U}_\lambda)$ is a subset of $\mathcal{PO}(\mathcal{U}_\lambda)$. Hence, the map 
    \[
        j_* \colon \varprojlim \mathcal{O}(\mathcal{U}_\lambda) \to \varprojlim \mathcal{PO}(\mathcal{U}_\lambda)
    \]
    induced by the inclusion maps is a continuous injection and commutes with $\varprojlim \sigma_\lambda$.

    Next, we show that $j_*$ is a surjective map. We define a monotone map $p \colon \Lambda \to \Lambda$ such that for each $\lambda \in \Lambda$, we have $p(\lambda) \geq \lambda$ and $\pi^{\mathcal{U}_{p(\lambda)}}_{\mathcal{U}_\lambda}(\mathcal{PO}(\mathcal{U}_{p(\lambda)})) = \mathcal{O}(\mathcal{U}_\lambda)$. We define a map 
    \[
        \phi \colon \varprojlim \{\mathcal{PO}(\mathcal{U}_\lambda), \pi^\eta_\lambda\} \to \varprojlim \{\mathcal{O}(\mathcal{U}_\lambda), \pi^\eta_\lambda\}
    \]
    as follows:
    \[
        (\phi(w))_\lambda = \pi^{\mathcal{U}_{p(\lambda)}}_{\mathcal{U}_\lambda}(w_{p(\lambda)}), \quad \text{for } w = (w_\gamma).
    \]
    This map is well-defined, continuous, and commutes with $\varprojlim \sigma_\lambda$.

    Now, consider the composition $j_* \circ \phi$. For a point $w = (w_\gamma) \in \varprojlim \{\mathcal{PO}(\mathcal{U}_\lambda), \iota\}$, we have
    \[
        (j_*(\phi(w)))_\lambda = j_\lambda(\pi^{\mathcal{U}_{p(\lambda)}}_{\mathcal{U}_\lambda}(w_{p(\lambda)})) = \pi^{\mathcal{U}_{p(\lambda)}}_{\mathcal{U}_\lambda}(w_{p(\lambda)}) = w_\lambda,
    \]
    where the last equality holds because $w$ is a thread in the inverse limit and $p(\lambda) \geq \lambda$.
    In particular, $j_* \circ \phi$ is the identity on $\varprojlim \{\mathcal{PO}(\mathcal{U}_\lambda), \pi^\eta_\lambda\}$. Since $j_*$ is injective, it follows that both $j_*$ and $\phi$ are conjugacies.

    It remains to show that both systems satisfy the Mittag-Leffler condition. By Lemma \ref{lem5.3}, the inclusion maps are surjective; thus, the system $\{\mathcal{O}(\mathcal{U}_\lambda), \pi^\eta_\lambda\}$ satisfies the Mittag-Leffler condition. For the system $\{\mathcal{PO}(\mathcal{U}_\lambda), \pi^\eta_\lambda\}$, we proceed as follows. Let $\lambda \in \Lambda$ and choose $\gamma \geq \lambda$ so that $\mathcal{U}_\gamma$ witnesses the shadowing property for $\mathcal{U}_\lambda$. Then for any $\eta \geq \gamma$, by Lemma \ref{lem5.5}, we have 
    \[
        \pi^\eta_\lambda(\mathcal{PO}(\mathcal{U}_\eta)) = \mathcal{O}(\mathcal{U}_\lambda) = \pi^\gamma_\lambda(\mathcal{PO}(\mathcal{U}_\gamma)).
    \]
    Thus, the system $\{\mathcal{PO}(\mathcal{U}_\lambda)\}$ satisfies the Mittag-Leffler condition.
\end{proof}

\begin{cor}\label{cor5.7}
    Let $f \colon X \to 2^X$ be a multivalued map with the multivalued shadowing property on a compact totally disconnected Hausdorff space $X$. Then $(X,f)$ is conjugate to an inverse limit of multivalued maps of finite type such that the inverse systems of the induced orbit systems satisfy the Mittag-Leffler condition.
\end{cor}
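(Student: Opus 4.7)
\textbf{Proof proposal for Corollary \ref{cor5.7}.} The plan is to simply thread together the structural theorem of Section \ref{se5} with the Mittag-Leffler analysis of Section \ref{se6}, so the proof is essentially a bookkeeping argument rather than a genuinely new construction. First, I would invoke Theorem \ref{thm4.11}(2), which already supplies a conjugacy between $(X,F)$ and the inverse limit system $(X_{\mathcal{P}art(X)}, F_{\mathcal{P}art(X)})$ over the directed set $\mathcal{P}art(X)$, where each factor $(\mathcal{U},F_\mathcal{U})$ is a set-valued map on a finite discrete space and is therefore a set-valued of finite type in the sense of the paper. This immediately gives the ``inverse limit of set-valued of finite type'' part of the conclusion; what remains is to check the Mittag-Leffler condition on the induced orbit systems.

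Next I would rewrite the orbit system of each factor in a more tractable form. By Lemma \ref{lem4.13}, for every $\mathcal{U}\in\mathcal{P}art(X)$ the orbit system $({\rm{Orb}}_{F_\mathcal{U}},\sigma_{F_\mathcal{U}})$ is conjugate to $(\mathcal{PO}(\mathcal{U}),\sigma)$, and these conjugacies are natural with respect to the connecting maps $\iota$ (both sides, on a given orbit pattern, simply read off the $\mathcal{U}$-address of each coordinate). Consequently the inverse system of orbit systems $\{(\iota^\eta_\lambda)^*,({\rm{Orb}}_{F_{\mathcal{U}_\lambda}},\sigma_{F_{\mathcal{U}_\lambda}})\}$ is isomorphic, as an inverse system of dynamical systems, to $\{\iota,\mathcal{PO}(\mathcal{U}_\lambda)\}$.

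Finally, I would apply Theorem \ref{thm5.6}(2): since $F$ has the set-valued shadowing property on the totally disconnected space $X$, that theorem tells us that the inverse system $\{\iota,\mathcal{PO}(\mathcal{U}_\lambda)\}$ satisfies the Mittag-Leffler condition. Transporting this back via the conjugacies of the previous step, the inverse system of orbit systems $\{(\iota^\eta_\lambda)^*,({\rm{Orb}}_{F_{\mathcal{U}_\lambda}},\sigma_{F_{\mathcal{U}_\lambda}})\}$ also satisfies the Mittag-Leffler condition, which is exactly the remaining claim of the corollary.

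I do not expect any serious obstacle, as all three ingredients are already proved in the paper; the only mild point of care is verifying the naturality of the conjugacy in Lemma \ref{lem4.13} with respect to $\iota$, so that the Mittag-Leffler property genuinely transfers between $\{\mathcal{PO}(\mathcal{U}_\lambda),\iota\}$ and $\{({\rm{Orb}}_{F_{\mathcal{U}_\lambda}},\sigma_{F_{\mathcal{U}_\lambda}}),(\iota^\eta_\lambda)^*\}$. This is immediate from the explicit description of both sides as coordinate-wise reading of $\mathcal{U}_\lambda$-patterns, so no further calculation is needed.
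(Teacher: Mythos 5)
Your proposal is correct and follows essentially the same route as the paper, whose proof is literally the citation of Theorem \ref{thm4.11}, Lemma \ref{lem4.13} and Theorem \ref{thm5.6} (together with Lemma \ref{lem4.7} to record that the finite factors are set-valued of finite type). The naturality point you flag is indeed the only thing to check, and it holds trivially since ${\rm{Orb}}_{F_{\mathcal{U}}}$ is by definition exactly $\mathcal{PO}(\mathcal{U})$ and the connecting maps on orbit systems are $\iota$ applied coordinate-wise.
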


\begin{proof}
    This is a direct consequence of Lemma \ref{lem4.7}, Theorem \ref{thm4.11}, and Theorem \ref{thm5.6}.
\end{proof}

\begin{thm}\label{thm6.8}
    Let $f:X\to 2^X$ be a multivalued map on a compact totally disconnected Hausdorff space $X$. Then $f$ has shadowing property if and only if $(X,f)$ is conjugate to an inverse limit of multivalued of finite type such that the inverse system of their induced orbit systems satisfy Mittag-Leffler condition. 
\end{thm}
\begin{proof}
    This is a consequence of Theorem \ref{thm3.12} and Corollary \ref{cor5.7}.
\end{proof}
For single-valued maps, recall the profound result proved in \cite{GM}.

\begin{thm}\label{thm6.9}
    Let $f:X\to X$ be a map on a compact totally disconnected Hausdorff space $X$. Then $f$ has single-valued shadowing if and only if $(X,f)$ is conjugate to the inverse limit of an ML inverse system of subshifts of finite type.
\end{thm}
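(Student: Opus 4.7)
The plan is to derive Theorem 6.9 as a specialization of the set-valued theory developed earlier in the paper, viewing a continuous single-valued map $f:X\to X$ as the upper semicontinuous set-valued map $F(x)=\{f(x)\}$ and then passing to orbit systems.

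For the forward direction, suppose $f$ has single-valued shadowing. By Remark 4.2, $F(x)=\{f(x)\}$ has set-valued shadowing. Apply Theorem 6.8 to $(X,F)$: there is an inverse system $\{g^\eta_\lambda,(V_\lambda,F_\lambda)\}$ of set-valued of finite type such that $(X,F)$ is conjugate to $(\underset{\longleftarrow}{\lim}\{g^\eta_\lambda,V_\lambda\},\underset{\longleftarrow}{\lim}F_\lambda)$ and the induced inverse system $\{(g^\eta_\lambda)^*,({\rm{Orb}}_{F_\lambda}(V_\lambda),\sigma_{F_\lambda})\}$ satisfies the Mittag-Leffler condition. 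By Lemma 4.7 each $({\rm{Orb}}_{F_\lambda}(V_\lambda),\sigma_{F_\lambda})$ is a subshift of finite type. Now use Remark 2.16 to identify $(X,f)$ with its own orbit system $({\rm{Orb}}_f(X),\sigma_f)=({\rm{Orb}}_F(X),\sigma_F)$, and then check that the orbit-system functor converts the set-valued inverse limit into the single-valued inverse limit of orbit systems; this gives $(X,f)$ conjugate to an ML inverse limit of subshifts of finite type, as required.

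For the reverse direction, suppose $(X,f)$ is conjugate to $\underset{\longleftarrow}{\lim}\{\varphi^\eta_\lambda,\Sigma_\lambda\}$ where each $\Sigma_\lambda$ is a subshift of finite type and the inverse system is ML. Each $\Sigma_\lambda$ has single-valued shadowing (realize it as the orbit system of a set-valued of finite type via Lemma 4.7 and the sliding block code, or verify directly). Apply the single-valued analogue of Theorem 3.12 (the argument there specializes to the single-valued setting, since for single-valued systems the orbit system is conjugate to the base and the ML condition on the $\Sigma_\lambda$ is itself the ML condition on orbit systems): an ML inverse limit of systems with shadowing has shadowing. Hence $(X,f)$ has single-valued shadowing.

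The main obstacle is the careful bookkeeping in the forward direction: one must verify that the orbit system of a set-valued inverse limit is conjugate to the inverse limit of the orbit systems of its coordinates (under the ML hypothesis on orbit systems), so that applying Theorem 6.8 to $F(x)=\{f(x)\}$ actually outputs an inverse system of \emph{single-valued} subshifts of finite type rather than only an inverse limit of genuine set-valued components. Once this functorial compatibility is in place, both directions reduce cleanly to the already-proven set-valued statements together with Lemmas 4.7 and 4.8, and no fundamentally new argument is needed beyond what is already developed in Sections 4--6.
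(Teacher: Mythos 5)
Your proposal is essentially sound, but you should be aware that the paper does not prove Theorem \ref{thm6.9} at all: it is quoted as the main result of Good and Meddaugh \cite{GM} ("recall the profound result proved in \cite{GM}"), so any derivation is by definition a different route from the paper's. What you are doing is reconstructing that theorem internally from the set-valued machinery of Sections 4--6, which is legitimate and non-circular, since Theorems \ref{thm3.12}, \ref{thm5.6}, \ref{thm6.4} and \ref{thm6.8} nowhere rely on Theorem \ref{thm6.9}. The one step you defer as "bookkeeping" is exactly where care is needed: the assertion that the orbit-system functor turns a set-valued inverse limit into the inverse limit of the coordinate orbit systems is \emph{not} a general fact in this paper (Remark \ref{rem2.14} explicitly warns that the ML condition can fail to pass to orbit systems, and no general compatibility lemma is proved). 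Rather than invoking Theorem \ref{thm6.8} plus an unproven functoriality, route the forward direction through the concrete partition-based construction: Theorem \ref{thm6.4}(2) gives $({\rm{Orb}}_F(X),\sigma_F)\cong\underset{\longleftarrow}{\lim}\{\mathcal{O}(\mathcal{U}_\lambda),\iota\}$, Theorem \ref{thm5.6} upgrades this to $\underset{\longleftarrow}{\lim}\{\mathcal{PO}(\mathcal{U}_\lambda),\iota\}$ with both inverse systems ML, and Lemma \ref{lem5.1}(4) (equivalently Lemma \ref{lem4.13} together with Lemma \ref{lem4.7}) identifies each $\mathcal{PO}(\mathcal{U}_\lambda)$ as a $1$-step shift of finite type; composing with the canonical conjugacy $(X,f)\cong({\rm{Orb}}_f(X),\sigma_f)$ then finishes the forward direction. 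Your reverse direction is fine as stated: for single-valued systems the orbit system is canonically conjugate to the base, so the ML hypothesis on the $\Sigma_\lambda$ is precisely the ML hypothesis on their orbit systems, each shift of finite type has shadowing by the classical Walters argument, and Theorem \ref{thm3.12} applies; one only needs the easy observation (the converse of Remark 4.2) that for $F(x)=\{f(x)\}$ set-valued and single-valued shadowing coincide, since the two notions of pseudo-orbit and of shadowing orbit literally agree.
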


Combining the results above, we obtain the following statement.

\begin{thm}
    Let $f:X\to 2^X$ be a multivalued map with shadowing property on a compact totally disconnected Hausdorff space $X$. Then its orbit system $({\rm{Orb}}_f(X),\sigma_f)$ has single-valued shadowing property.
\end{thm}
\begin{proof}
    This is a consequence of Theorem \ref{thm5.6}, Theorem \ref{thm6.9} and Theorem \ref{thm6.8}.
\end{proof}

If $X$ is a compact totally disconnected Hausdorff space, we have the following observation.

\begin{cor}
    Let $f:X\to 2^X$ be a multivalued map on the Cantor set or any compact totally disconnected metric space $X$. Then $f$ has multivalued shadowing property if and only if $(X,f)$ is conjugate to the inverse limit of an ML sequence of multivalued maps of finite type such that their orbit systems satisfy Mittag-Leffler condition.
\end{cor}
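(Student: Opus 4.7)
The plan is to deduce this corollary directly from Theorem \ref{thm6.8} together with the fact that a compact totally disconnected metric space admits a countable cofinal family in $\mathcal{P}art(X)$, which upgrades the net of Theorem \ref{thm6.8} to a sequence. Since the equivalence ``shadowing $\Longleftrightarrow$ inverse limit of set-valued of finite type with ML orbit systems'' is already proved for arbitrary compact totally disconnected Hausdorff spaces in Theorem \ref{thm6.8}, the only substantive content in the corollary is to show that in the metric case we may take the inverse system to be indexed by $\mathbb{N}$ (i.e., a sequence).

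For the forward direction, I would first invoke Theorem \ref{thm6.8} to conjugate $(X,F)$ to $(\underset{\longleftarrow}{\lim}\{\iota,\mathcal{U}_\lambda\}, \underset{\longleftarrow}{\lim} F_{\mathcal{U}_\lambda})$ indexed by some cofinal directed $\{\mathcal{U}_\lambda\}_{\lambda\in\Lambda}\subset \mathcal{P}art(X)$. Because $X$ is compact metric and totally disconnected, it has a countable clopen base, and hence one can construct a sequence $\mathcal{U}_1 \leq \mathcal{U}_2\leq\cdots$ in $\mathcal{P}art(X)$ whose meshes tend to $0$; this sequence is cofinal in $(\mathcal{P}art(X),\leq)$. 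Replacing the net by this cofinal sequence yields an ML inverse system $(\iota_n^{n+1},(\mathcal{U}_n,F_{\mathcal{U}_n}))_{n\in\mathbb N}$ of set-valued of finite type (by Lemma \ref{lem4.7} each $(\mathcal{U}_n,F_{\mathcal{U}_n})$ is set-valued of finite type) whose inverse limit is conjugate to $(X,F)$, and whose induced orbit systems satisfy the Mittag-Leffler condition by Theorem \ref{thm5.6}.

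For the converse, I would quote Theorem \ref{thm3.12}: since each set-valued of finite type has the shadowing property (Lemma \ref{lem4.7}) and the inverse system of their orbit systems is ML by assumption, the inverse limit inherits the set-valued shadowing property, and hence so does $(X,F)$ via the conjugacy.

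The only mildly delicate point is verifying that the cofinal sequence $\{\mathcal{U}_n\}$ in $\mathcal{P}art(X)$ actually exists and that the two versions of the inverse limit (over $\Lambda$ vs.\ over $\mathbb{N}$) produce conjugate set-valued systems; this is a standard consequence of the fact that restriction of an inverse system to a cofinal subset does not change the inverse limit up to canonical homeomorphism, and the restriction preserves both the ML property of the base system and of the induced orbit systems. I do not anticipate any serious obstacle beyond invoking this standard cofinality fact together with the earlier theorems.
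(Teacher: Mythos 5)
Your proposal is correct and follows the same route the paper intends: the paper states this corollary as an immediate observation after Theorem \ref{thm6.8} without giving a separate proof, and the only extra content in the metric case is precisely the point you identify, namely that a compact totally disconnected metric space admits a countable cofinal chain of clopen partitions, so the ML net of set-valued of finite type can be replaced by an ML sequence without changing the inverse limit up to conjugacy. Your use of Lemma \ref{lem4.7}, Theorem \ref{thm5.6}, and Theorem \ref{thm3.12} for the two directions matches the dependencies the paper relies on.
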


In~\cite{BBK}, the authors asked whether there exists a characterization of shadowing in terms of graph covers (see~\cite[Questions]{BBK}). As an application of single-valued maps on totally disconnected spaces, we establish the following result.

\begin{thm}
    Let $\mathcal{G} = \{G_1 \xleftarrow{\varphi_1} G_2 \xleftarrow{\varphi_2} \cdots\}$ be a graph cover. The system has the shadowing property if and only if the inverse system of the orbit spaces (infinite walk) of $G_i=(V_i, E_i)$ ($i\in \mathbb{N}$) satisfies the Mittag-Leffler condition.
\end{thm}

\begin{proof}
    This result follows directly from Theorem~\ref{thm3.12} and Theorem~\ref{thm6.8}.
\end{proof}

\section{Density of multivalued shadowing in totally disconnected spaces}\label{se7}

Now, we shift our focus from single-valued continuous functions to the category of multivalued semi-continuous functions.

\subsection{Class of multivalued maps}

Let $X$ and $Y$ be two compact metric spaces. We denote by $C(X,Y)$ the set of all single-valued continuous maps from $X$ to $Y$. Suppose that $d'$ is a metric on $Y$. We define the uniform metric $d_\infty'$ on $C(X,Y)$ by
\[
    d_\infty'(f,g) = \sup_{x\in X} d'(f(x),g(x)) \quad \text{for all } f,g\in C(X,Y).
\]

Let $(2^Y, d_H')$ be the hyperspace of $Y$. We denote by $UC(X,Y)$ the set consisting of all upper semi-continuous maps from $X$ to $2^Y$. 
Similarly, we can define $LC(X)$ as the set of lower semi-continuous multivalued maps on $X$. We know that 
\[
    LC(X) \cap UC(X) = C(X, 2^X).
\]

Let $d$ be a metric on $X$. Recall that $d_H$ denotes the Hausdorff metric on $2^X$. For two compact-valued maps $f, g \in UC(X)$, we define
\[
    d_H^\infty(f,g) = \sup_{x\in X} d_H(f(x), g(x)).
\]
\begin{lem}
Let $(X,d)$ be a compact metric space. Suppose that $\{f_i\}_{i=1}^{\infty}$ is a Cauchy sequence of upper semi-continuous (resp.\ lower semi-continuous) maps from $X$ to $2^X$ with respect to the metric $d^\infty_H$, which converges to a map $f \colon X \to 2^X$. Then $f$ is also upper semi-continuous (resp.\ lower semi-continuous). Consequently, the spaces $(UC(X),d_H^\infty)$ and $(LC(X),d_H^\infty)$ are complete.
\end{lem}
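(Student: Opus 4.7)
The plan is to handle the pointwise limit first, then upgrade the semicontinuity of the terms to the semicontinuity of the limit, and finally conclude completeness.

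First I would note that since $(X,d)$ is compact, the hyperspace $(2^X,d_H)$ is a complete metric space. If $\{F_i\}$ is Cauchy in $(UC(X),d_H^\infty)$, then for every $x\in X$ the sequence $\{F_i(x)\}$ is Cauchy in $(2^X,d_H)$ and hence converges to a unique $F(x)\in 2^X$; moreover $d_H(F_i(x),F(x))\to 0$ uniformly in $x$, so $F_i\to F$ in $d_H^\infty$. The same construction produces the pointwise limit in the lower semicontinuous case.

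For the upper semicontinuous case, by Theorem \ref{thm1.1} it suffices to show that $Gr(F)$ is closed. Suppose $(x_n,y_n)\in Gr(F)$ converges to $(x,y)$. Fix $i$, and set $\varepsilon_i:=d_H^\infty(F,F_i)$. Since $y_n\in F(x_n)$ and $d_H(F(x_n),F_i(x_n))\le \varepsilon_i$, we can choose $z_n^i\in F_i(x_n)$ with $d(y_n,z_n^i)\le \varepsilon_i$. Passing to a subsequence (using compactness of $X$) we get $z_n^i\to z^i$; because $Gr(F_i)$ is closed and $(x_n,z_n^i)\to (x,z^i)$, we obtain $z^i\in F_i(x)$, with $d(y,z^i)\le \varepsilon_i$. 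Therefore
\[
d(y,F(x))\;\le\; d(y,z^i)+d_H(F_i(x),F(x))\;\le\; 2\varepsilon_i,
\]
and letting $i\to\infty$ forces $d(y,F(x))=0$, so $y\in F(x)$ since $F(x)$ is closed.

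For the lower semicontinuous case I would show that $F^{-1}(U)=\{z:F(z)\cap U\ne\emptyset\}$ is open for every open $U\subset X$. Given $x\in F^{-1}(U)$, pick $y\in F(x)\cap U$ and $r>0$ with $B(y,r)\subset U$, then select $i$ so that $\varepsilon_i=d_H^\infty(F,F_i)<r/3$. Since $y\in F(x)$ and $d_H(F(x),F_i(x))<r/3$, the open ball $V=B(y,2r/3)$ meets $F_i(x)$; by lower semicontinuity of $F_i$ there is an open neighborhood $W$ of $x$ with $F_i(z)\cap V\ne\emptyset$ for all $z\in W$. For any such $z$, some $w\in F_i(z)\cap V$ lies within $r/3$ of a point of $F(z)$, which therefore lies in $B(y,r)\subset U$. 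Hence $W\subset F^{-1}(U)$, and $F$ is lower semicontinuous. Completeness of $(UC(X),d_H^\infty)$ and $(LC(X),d_H^\infty)$ is then immediate.

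The main obstacle is the upper semicontinuous direction: uniform Hausdorff convergence does not automatically promote $x_n\to x$ to $F(x_n)\to F(x)$ in $d_H$, so one cannot simply take limits inside $y_n\in F(x_n)$. The trick is to introduce auxiliary approximants $z_n^i\in F_i(x_n)$ and trade one limit against the other, using the closed graph of each $F_i$ to return to the fixed point $x$ before passing to the uniform limit in $i$.
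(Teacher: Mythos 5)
Your proof is correct. The lower semicontinuity half is essentially the paper's own argument: both run the same three-$\varepsilon$ relay through an approximant $F_i$ (a point of $F(x)$ near $y$ gives a point of $F_i(x)$ in a slightly larger ball, lower semicontinuity of $F_i$ propagates this to a neighborhood $W$ of $x$, and the uniform bound pulls it back to $F(z)$); you merely phrase the conclusion via openness of $F^{-1}(U)$ while the paper states it directly as $F(z)\cap B_d(y,\varepsilon)\ne\emptyset$ for $z$ near $x$. The upper semicontinuity half, however, takes a genuinely different route. The paper argues directly with neighborhoods: for $i$ large it uses upper semicontinuity of $F_i$ to find $\delta_x$ with $F_i(B_d(x,\delta_x))\subset B_d(F_i(x),\varepsilon/3)$ and then sandwiches $F(B_d(x,\delta_x))\subset B_d(F(x),\varepsilon)$, the classical uniform-limit argument. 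You instead invoke the Closed Graph Theorem and verify closedness of $Gr(F)$ sequentially, introducing auxiliary points $z_n^i\in F_i(x_n)$, extracting a convergent subsequence, and using closedness of each $Gr(F_i)$ before letting $i\to\infty$; this is sound (the double limit is handled correctly since the bound $d(y,F(x))\le 2\varepsilon_i$ is obtained for each fixed $i$ separately, and $F(x)$ is closed). Your version is more in the spirit of the paper's repeated use of the Closed Graph Theorem elsewhere, at the cost of a compactness/subsequence extraction that the paper's direct $\varepsilon$--$\delta$ argument avoids; the paper's version also adapts more readily beyond metrizable settings, though for this lemma $X$ is metric so nothing is lost.
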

\begin{proof}
    For each $x \in X$, the sequence $\{f_i(x)\}_{i=1}^\infty$ converges to some $K_x \in 2^X$ in the Hausdorff metric. Thus, the limit map $f$ takes the form
    \[
        f(x) = K_x = \lim_{i \to \infty} f_i(x).
    \]

    We first show that $f$ is upper semi-continuous. Given $\varepsilon > 0$, there exists $N \in \omega$ such that for all $i > N$,
    \[
        \sup_{x \in X} d_H^\infty(f_i(x), f(x)) < \frac{\varepsilon}{3}.
    \]
    This implies
    \begin{align} \label{eq7.1}
        f_i(x) \subset B_d\left(f(x), \frac{\varepsilon}{3}\right) \quad \text{and} \quad f(x) \subset B_d\left(f_i(x), \frac{\varepsilon}{3}\right)
    \end{align}
    for all $x \in X$ and $i > N$.

    Fix $x \in X$ and $i > N$. Since $f_i$ is upper semi-continuous, there exists $\delta_x > 0$ such that
    \[
        f_i(B_d(x, \delta_x)) \subset B_d\left(f_i(x), \frac{\varepsilon}{3}\right) \subset B_d\left(f(x), \frac{2\varepsilon}{3}\right).
    \]
    Now, for each $y \in f(B_d(x, \delta_x))$, \eqref{eq7.1} implies that there exists $z \in f_i(B_d(x, \delta_x))$ with $d(y, z) < \varepsilon/3$,  which means that 
    \[y\in B_d\left(f(x),\varepsilon\right)\text{ and thus }f\left(B_d(x,\delta_x)\right)\subset B_d\left(f(x),\varepsilon\right).\]
    Since $\varepsilon > 0$ is arbitrary, $f$ is upper semi-continuous.

   If the sequence $\{f_i\}_{i=1}^\infty$ are lower semi-continuous, we now show that $f$ is lower semi-continuous. Given $\varepsilon > 0$, there exists $N \in \omega$ such that for all $i > N$,
    \[
        \sup_{x \in X} d_H^\infty(f_i(x), f(x)) < \frac{\varepsilon}{3}.
    \]
    Fix $x \in X$ and $i > N$. For each $y \in f(x)$, the above implies
    \[
        B_d\left(y, \frac{2\varepsilon}{3}\right) \cap f_i(x) \neq \emptyset.
    \]
    Since $f_i$ is lower semi-continuous, there exists $\delta_x > 0$ such that for all $z \in B_d(x, \delta_x)$,
    \[
        f_i(z) \cap B_d\left(y, \frac{2\varepsilon}{3}\right) \neq \emptyset.
    \]
    For each $z' \in f_i(z) \cap B_d\left(y, \frac{2\varepsilon}{3}\right)$, \eqref{eq7.1} implies that there exists $z'' \in f(z)$ with $d(z', z'') < \varepsilon/3$. By the triangle inequality,
    \[
        d(y, z'') \leq d(y, z') + d(z', z'') < \frac{2\varepsilon}{3} + \frac{\varepsilon}{3} = \varepsilon.
    \]
    Thus,
    \[
        f(z) \cap B_d(y, \varepsilon) \neq \emptyset \quad \text{for all } z \in B_d(x, \delta_x).
    \]
    Since $\varepsilon > 0$ is arbitrary, $f$ is lower semi-continuous.
\end{proof}

\subsection{Multivalued shadowing is dense in \texorpdfstring{$UC(X)$}{UC(X)}}
We prove that the collection of multivalued maps with the shadowing property is dense in the space of all multivalued maps on a compact totally disconnected Hausdorff space. It is also considered in like \cite{Sh89}, \cite{BD}

Throughout this subsection, let $X$ be a compact totally disconnected Hausdorff space. First, we recall the definition of the \textit{Vietoris topology} on the hyperspace $2^X$. Given open subsets $U_1, \dots, U_n$ of $X$, we define:
\[
    \langle U_1, \dots, U_n \rangle = \left\{ A \in 2^X : A \subset \bigcup_{i=1}^n U_i \quad \text{and} \quad A \cap U_i \neq \emptyset \text{ for all } 1 \leq i \leq n \right\}.
\]
The collection of all sets of the form $\langle U_1, \dots, U_n \rangle$ constitutes a basis for the Vietoris topology on $2^X$ (see \cite{AF,IN}).

Inspired by the basis of the Vietoris topology, we introduce the following definition. Let $\mathcal{U} \in \mathcal{P}art(X)$ be a finite partition of $X$. For $f, g \in UC(X)$, we say that $g$ is \textit{$\mathcal{U}$-close} to $f$ if for each $x \in X$, the following holds:
\[
    f(x) \subset \bigcup_{i \in I} U_i \quad \text{and} \quad f(x) \cap U_i \neq \emptyset \ (\forall i \in I) \implies g(x) \subset \bigcup_{i \in I} U_i \quad \text{and} \quad g(x) \cap U_i \neq \emptyset \ (\forall i \in I),
\]
where $\{U_i\}_{i \in I}$ is the sub-collection of $\mathcal{U}$ intersecting $f(x)$.

Thus, we have the following statement.

\begin{lem}
    Let $\{f_\lambda\}_{\lambda \in \Lambda}$ be a net of multivalued maps $f_\lambda: X \to 2^X$. Suppose that for any $\mathcal{U} \in \mathcal{P}art(X)$, there exists $\eta \in \Lambda$ such that for all $\gamma \geq \eta$, $f_\gamma$ is $\mathcal{U}$-close to $F_\eta$. Then the net $f_\lambda$ converges pointwise to a multivalued map $f$, and $f$ is upper semi-continuous.
\end{lem}

\begin{proof}
    First, we show the pointwise convergence. For each fixed $x \in X$, the given condition implies that $\{f_\lambda(x)\}_{\lambda \in \Lambda}$ forms a Cauchy net with respect to the Vietoris topology on $2^X$. Since $2^X$ is compact, this net converges to some compact set, which we denote by $f(x)$.

    Next, we prove that $f$ is upper semi-continuous. Let $x \in X$ and let $W$ be an open neighborhood of $f(x)$. Since $f_\lambda(x) \to f(x)$, there exists $\eta \in \Lambda$ such that $f_\gamma(x) \subset W$ for all $\gamma \geq \eta$. Fix such a $\gamma$. Since $f_\gamma$ is upper semi-continuous, there exists an open neighborhood $V$ of $x$ such that $f_\gamma(y) \subset W$ for all $y \in V$.
    
    By the construction of the limit (and the fineness of partitions), we can infer that $f(y) \subset W$ for all $y \in V$. This implies that $f$ is upper semi-continuous.
\end{proof}

\begin{lem}\label{lem7.4}
Given $f:X\to 2^X$ a multivalued map on $X$. For any $\mathcal{U}\in \mathcal{P}art(X)$ there is a multivalued map with shadowing property $f^\mathcal{U}:X\to 2^X$ such that $f^\mathcal{U}$ is $\mathcal{U}$-close to $f$.
\end{lem}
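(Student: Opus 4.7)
The plan is to snap $F$ to the partition $\mathcal{U}$ by setting
\[F^\mathcal{U}(x) = \bigcup\{V\in \mathcal{U}: V\cap F(x)\ne\emptyset\}\]
for each $x\in X$, so $F^\mathcal{U}(x)$ is a finite nonempty union of clopen atoms of $\mathcal{U}$. Since $\mathcal{U}$ is a partition, $V\cap F^\mathcal{U}(x)\ne\emptyset$ is equivalent to $V$ being one of the atoms in this union, equivalently $V\cap F(x)\ne\emptyset$; thus $F(x)$ and $F^\mathcal{U}(x)$ intersect exactly the same members of $\mathcal{U}$, giving the $\mathcal{U}$-closeness condition immediately.

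For upper semicontinuity, I would fix $x\in X$ and an open $W\supset F^\mathcal{U}(x)$, and write $\mathcal{U}(x) = \{V\in \mathcal{U}: V\cap F(x)\ne\emptyset\}$. For each of the finitely many $V\in \mathcal{U}\setminus \mathcal{U}(x)$, $F(x)\subset X\setminus V$, which is open since $V$ is clopen, so upper semicontinuity of $F$ furnishes a neighborhood $N_V$ of $x$ on which $F(y)\cap V=\emptyset$. Setting $N = \bigcap_{V\notin \mathcal{U}(x)} N_V$, every $y\in N$ satisfies $\mathcal{U}(y)\subset \mathcal{U}(x)$, hence $F^\mathcal{U}(y)\subset F^\mathcal{U}(x)\subset W$, so $F^\mathcal{U}\in UC(X)$.

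For set-valued shadowing I would exploit the separation of atoms. Let $\delta_0 = \min\{d(V,V'): V\ne V'\in\mathcal{U}\}$, which is strictly positive because $\mathcal{U}$ is a finite partition into disjoint compact subsets of a compact metric space. Given any $\varepsilon>0$, pick $0<\delta<\delta_0$. For a $\delta$-pseudo-orbit $\{x_i\}$ of $F^\mathcal{U}$, write $F^\mathcal{U}(x_i)$ as a union of atoms $V^{(i)}_{j}$; the condition $d(F^\mathcal{U}(x_i),x_{i+1})<\delta<\delta_0$ forces the atom containing $x_{i+1}$ to lie within $\delta_0$ of some $V^{(i)}_j$ and hence coincide with it. Thus $x_{i+1}\in F^\mathcal{U}(x_i)$, so $\{x_i\}$ is already an $F^\mathcal{U}$-orbit and trivially $\varepsilon$-shadows itself (one may alternatively appeal to Corollary \ref{cor2.3} in the same setting).

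The subtle step is the upper semicontinuity verification, since $F^\mathcal{U}$ is not constant on individual atoms of $\mathcal{U}$. The obvious alternative of making $F^\mathcal{U}$ depend only on $U(x)$, say via $\bigcup\{V\in\mathcal{U}: V\cap F(U(x))\ne\emptyset\}$, would render USC trivial but breaks $\mathcal{U}$-closeness, because distinct points in the same atom of $\mathcal{U}$ may have genuinely different $F$-values meeting different atoms. The finiteness of $\mathcal{U}$ is exactly what lets a finite intersection of USC-derived neighborhoods of $x$ rescue the argument.
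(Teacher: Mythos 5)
Your proposal is correct and takes essentially the same route as the paper: the identical definition $F^{\mathcal U}(x)=\bigcup\{V\in\mathcal U: V\cap F(x)\ne\emptyset\}$, with shadowing obtained in both cases by observing that any pseudo-orbit whose jumps are smaller than the minimal distance between distinct atoms of $\mathcal U$ is already a genuine $F^{\mathcal U}$-orbit. Your explicit finite-intersection verification of upper semicontinuity fills in a step the paper merely declares ``easy to verify.''
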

\begin{proof}
    For each $x\in X$, we define
    \[f^\mathcal{U}(x)=\bigcup\{U\in \mathcal{U}:U\cap F(x)\ne\emptyset\}.\]
    As $f$ is upper semicontinuous, it is easy to verify that $f^\mathcal{U}$ is also upper semicontinuous and of course with compact value.

    Given $\mathcal{W}\in \mathcal{P}art(X)$, let $\mathcal{V}\in \mathcal{P}art(X)$ with $\mesh(\mathcal{V})<dist(\mathcal{U})$, where 
    \[dist(\mathcal{U})=\min\{d(U_i,U_j):U_i\cap U_j=\emptyset\}\]
    and $\{x_i\}$ a $\mathcal{V}$-pseudo orbit, that is, $x_i',x_{i+1}\in V_{i+1}$ for some $x_i'\in f^\mathcal{U}(x_i)$, from the construction of $f^\mathcal{U}$, $x_{i+1}\in f^\mathcal{U}(x_i)$ for each $i\in \omega$, which means that $(x_i)$ is actually an $f^\mathcal{U}$-orbit. Therefore, $f^\mathcal{U}$ has shadowing property.
\end{proof}

As a consequence ,we obtain:

\begin{thm}
    Let $f:X\to 2^X$ be a multivalued map on compact totally disconnected metric space $(X,d)$ and $\varepsilon>0$. We denote by $d_H$ the \textit{Hausdorff metric} induced by $d$. Then there is a multivalued map $g$ with shadowing property such that 
    \[d_H(f(x),g(x))<\varepsilon\text{ for all }x\in X.\]
    Therefore, multivalued maps with shadowing property is dense in the collection of all multivalued maps.
\end{thm}

\section{Shadowing for general multivalued systems}\label{se8}
In this section, inspired by \cite{GM}, we discuss the multivalued shadowing property for general metric spaces. 
First, let $\mathcal{FOC}(X)$ denote the collection of finite open covers of $X$. For any cover $\mathcal{C}$ of $X$ and a subset $A \subset X$, the \textit{star} of $A$ with respect to $\mathcal{C}$ is defined as 
\[
st(A, \mathcal{C}) = \bigcup \{ U \in \mathcal{C} : U \cap A \neq \emptyset \}.
\]
For each $\mathcal{U} \in \mathcal{FOC}(X)$, we denote its \textit{mesh} by $\operatorname{mesh}(\mathcal{U}) = \sup \{ \operatorname{diam}(U) : U \in \mathcal{U} \}$ and its \textit{Lebesgue number} by $\operatorname{Leb}(\mathcal{U})$.

\begin{thm}\label{thm7.1}
Let $X$ be a compact metric space and $f \colon X \to 2^X$ a multivalued map with the shadowing property. Then there exists a sequence $(g_n^{n+1}, X_n)$ of multivalued maps of finite type such that $(X, f)$ is a subfactor of 
$\left( \varprojlim (g_n^{n+1}, X_n), \varprojlim f_n \right),$
where each $f_n \colon X_n \to 2^{X_n}$ is a multivalued map of finite type for $n \in \omega$. 
Moreover, its orbit system $(\operatorname{Orb}_f(X), \sigma_f)$ is a factor of the inverse limit of the orbit systems of these multivalued maps; that is, a factor of an inverse limit of subshifts of finite type.
\end{thm}
\begin{proof}
        Let $\{\mathcal{U}_i\}$ be a sequence of finite open covers satisfying the following properties:
    \begin{enumerate}
        \item $\mathcal{U}_{n+1}$ witness $\mathcal{U}_n$ shadowing;
        \item $\{\mathcal{U}_i\}$ is cofinal in $\mathcal{FOC}(X)$, and
        \item for all $U\in \mathcal{U}_{n+2}$, there is $W\in \mathcal{U}_n$ such that $st(U,\mathcal{U}_{n+1})\subset W$.
        \item For each $U\in\mathcal{U}_n$, it can not be contained in the union of other open subsets of $\mathcal{U}_n$.
    \end{enumerate}

    First, let $\mathcal{U}_0 = \{X\}$. We proceed by induction: suppose $\mathcal{U}_n$ is defined, and let $\mathcal{U}_{n+1}$ be a finite open cover witnessing the multivalued shadowing property for $\mathcal{U}_n$, such that 
\[
\operatorname{mesh}(\mathcal{U}_{n+1}) < \frac{1}{3} \operatorname{Leb}(\mathcal{U}_n).
\]
This construction ensures that conditions 1. and 2. are satisfied. 

For 3., fix $n \in \mathbb{N}$ and $U \in \mathcal{U}_{n+2}$. By the construction, $U \subset V$ for some $V \in \mathcal{U}_{n+1}$, which implies $st(U, \mathcal{U}_{n+1}) \subset st(V, \mathcal{U}_{n+1})$. Note that 
\[
\operatorname{diam}(st(V, \mathcal{U}_{n+1})) \leq 3 \operatorname{mesh}(\mathcal{U}_{n+1}) < \operatorname{Leb}(\mathcal{U}_n).
\]
Consequently, by the definition of the Lebesgue number, there exists some $W \in \mathcal{U}_n$ such that 
\[
st(U, \mathcal{U}_{n+1}) \subset st(V, \mathcal{U}_{n+1}) \subset W,
\]
as required. Condition 4. can be easily done.

Let $f \colon X \to 2^X$ and the sequence of covers $\{\mathcal{U}_i\}$ be as defined above. For each $U \in \mathcal{U}_{n+2}$, fix an element $W(U) \in \mathcal{U}_n$ such that $st(U, \mathcal{U}_{n+1}) \subset W(U)$. For any $\mathcal{U} \in \mathcal{FOC}(X)$, we define the induced map $f_{\mathcal{U}} \colon \mathcal{U} \to 2^{\mathcal{U}}$ by 
\[
V \in f_{\mathcal{U}}(W) \iff f(W) \cap V \neq \emptyset.
\]
We define the bonding map $w \colon \mathcal{U}_{n+2} \to \mathcal{U}_n$ by $U \mapsto W(U)$. It is straightforward to verify that 
\[
V \in f_{\mathcal{U}_{n+2}}(U) \implies W(V) \in f_{\mathcal{U}_n}(W(U)).
\]
Thus, $w$ satisfies Condition \eqref{eq1.11}. Furthermore, by property 4., the sequence of multivalued systems $\{(w, (\mathcal{U}_{2n}, f_{\mathcal{U}_{2n}}))\}$ satisfies the Mittag-Leffler condition. By Theorem \ref{lem1.12}, we obtain the inverse limit system $(\varprojlim (\mathcal{U}_{2n}, w), \varprojlim f_{\mathcal{U}_{2n}})$ and define the map
\[
g \colon (U_n)_{n \in \omega} \to \bigcap_{n \in \omega} U_n.
\]
Since $\{\mathcal{U}_n\}$ is a cofinal sequence of covers, $g$ is well-defined. Suppose $(V_n) \in \varprojlim f_{\mathcal{U}_{2n}}((U_n))$ with $\bigcap U_n = \{x\}$ and $\bigcap V_n = \{y\}$. To show $g$ is a morphism, assume for contradiction that $y \notin f(x)$. Then there exist neighborhoods $O_1, O_2$ of $x, y$ respectively such that $f(O_1) \cap O_2 = \emptyset$, which contradicts the definition of $f_{\mathcal{U}_{2n}}$. Therefore, $g$ satisfies Condition \eqref{eq1.11}, and $(X,f)$ is a subfactor of $\varprojlim (\mathcal{U}_{2n}, w)$.

Moreover, we claim that the orbit system $(\operatorname{Orb}_f(X), \sigma_f)$ is a factor of the inverse limit of the orbit systems of these multivalued maps of finite type. For convenience, we adopt the notation established in Section \ref{subse6.1}.

\begin{cla}
$w(\mathcal{PO}(\mathcal{U}_{n+2})) \subset \mathcal{O}(\mathcal{U}_n)$.
\end{cla}

\begin{proof}
Let $(U_j) \in \mathcal{PO}(\mathcal{U}_{n+2})$ and let $(x_j)$ be a pseudo-orbit with pattern $(U_j)$. Since $\mathcal{U}_{n+2}$ witnesses the $\mathcal{U}_{n+1}$-shadowing property, there exists an $f$-orbit $(z_j)$ and a sequence $(V_j) \in \mathcal{O}(\mathcal{U}_{n+1})$ such that $z_j, x_j \in V_j$ for all $j$. 

In particular, by the choice of $(V_j)$ and the definition of the star operator, we have $V_j \subset st(U_j, \mathcal{U}_{n+1}) \subset W(U_j)$. This implies that each $W(U_j)$ is non-empty, which establishes that $(W(U_j)) \in \mathcal{O}(\mathcal{U}_n)$. Although $w$ is not necessarily surjective, for every $(x_j) \in \operatorname{Orb}_f(X)$, the orbit $(x_j)$ is itself a $\mathcal{U}_{n+2}$-pseudo-orbit. Thus, there exists some $(U_j) \in \mathcal{PO}(\mathcal{U}_{n+2})$ such that $x_j \in U_j$, and consequently $x_j \in W(U_j)$, confirming that $(W(U_j))$ is a $\mathcal{U}_n$-orbit pattern for $(x_j)$.
\end{proof}

\begin{cla}
The orbit space $(\operatorname{Orb}_f(X), \sigma_f)$ is a factor of the inverse limit of the orbit systems of multivalued maps of finite type.
\end{cla}

\begin{proof}
First, define $\iota' \colon \mathcal{PO}(\mathcal{U}_{n+2}) \to \mathcal{PO}(\mathcal{U}_n)$ by $\iota' = j \circ w$, where $j \colon \mathcal{O}(\mathcal{U}_n) \hookrightarrow \mathcal{PO}(\mathcal{U}_n)$ is the inclusion map. Similarly, define $\iota'' \colon \mathcal{O}(\mathcal{U}_{n+2}) \to \mathcal{O}(\mathcal{U}_n)$ by $\iota'' = w \circ j'$, where $j' \colon \mathcal{O}(\mathcal{U}_{n+2}) \hookrightarrow \mathcal{PO}(\mathcal{U}_{n+2})$ is the inclusion. Thus, $(\mathcal{PO}(\mathcal{U}_{2i}), \iota')$ and $(\mathcal{O}(\mathcal{U}_{2i}), \iota'')$ form two inverse systems. The map $w$ induces a morphism $w^*$ from $\varprojlim (\mathcal{PO}(\mathcal{U}_{2i}), \iota')$ to $\varprojlim (\mathcal{O}(\mathcal{U}_{2i}), \iota'')$ defined by $w^*(\{U_i\}) = \{w(U_i)\}$, which commutes with the shift map $\sigma^*$.

By Theorem \ref{thm5.2}, we define a map 
\[
\phi \colon \varprojlim (\mathcal{O}(\mathcal{U}_{2i}), \iota'') \to \operatorname{Orb}_f(X) \quad \text{by} \quad \phi(u_i) = \left( \bigcap_{i} \overline{\pi_j(u_i)} \right).
\]
In view of Theorem \ref{thm5.2} and arguments similar to Theorem \ref{thm6.4}, $\phi$ is continuous and commutes with $\sigma^*$. To show that $\phi$ is surjective, fix $(x_j) \in \operatorname{Orb}_f(X)$. For each $i \in \mathbb{N}$, let $O_{2i}((x_j)) \subset \mathcal{O}(\mathcal{U}_{2i})$ be the set of $\mathcal{U}_{2i}$-orbit patterns for $(x_j)$, and let 
\[
O(\sigma_f^k(x_j)) = \bigcap_{i} \pi^{-1}_{2i}(\overline{\sigma^k(O((x_j)))}) \cap \varprojlim (\mathcal{O}(\mathcal{U}_{2i}), \iota'').
\]
Then $\sigma^k_f(O((x_j)))$ is non-empty for all $k \in \mathbb{N}$ and $\phi(\sigma^k_f(O((x_j)))) = (x_{j+k})$, which implies that $\phi$ is surjective. Therefore, $(\operatorname{Orb}_f(X), \sigma_f)$ is a factor of the inverse system via the factor map $\phi \circ w^*$.
\end{proof}
\end{proof}

A map $\pi:X\to Y$ is \textbf{open} if $f(U)$ is open whenever $U$ is open in $X$. Then we end this section by giving an observation.
\begin{thm}\label{thm8.4}
    Let $f:X\to 2^X$ and $g:Y\to 2^Y$ be two upper semicontinuous maps and $\pi:X\to Y$ an open factor between $(X,f)$ and $(Y,g)$. If $f$ has multivalued shadowing property, then $g$ has multivaluued shadowing property.
\end{thm}
\begin{proof}
    Let $\mathcal{U}_Y\in \mathcal{FOC}(Y)$. Then $\mathcal{U}_X=\pi^{-1}(\mathcal{U}_Y)$ is an open cover of $X$, let $\mathcal{V}_X\in \mathcal{FOC}(X)$  witness the shadowing property of $\mathcal{U}_X$ and take $\mathcal{V}_Y=\pi(\mathcal{V}_X)$. Suppose $\{y_i\}$ is a sequence of $\mathcal{V}_Y$-pseudo-orbit in $Y$, then there is a sequence $\{x_i\}$ such that it is a $\mathcal{V}_X$-pseudo-orbit and $\pi(x_i)=y_i$. As $(X,f)$ has shadowing property, there is an $\mathcal{U}_X$-orbit $(z_i)$ such that $(x_i,z_i)\in U_i$ for some $U_i\in \mathcal{U}_X$. Thus, $(y_i(=\pi(x_i)),\pi(z_i))\in \pi(U_i)\in \mathcal{U}_Y$, which means that each $\mathcal{V}_Y$-pseudo-orbit is shadowed by some orbit in $Y$ with $\mathcal{U}_Y$ pattern. Therefore, $g$ has shadowing property.
\end{proof}

Let $f:X\to 2^X$ be an upper semicontinuous map. For each $n\in \omega$, we define
\[{\rm{Orb}}^n_f(X):=\left\{(x_1,\cdots,x_n)\in \prod_{i=1}^nX:x_{i+1}\in f(x_i), i=1,\cdots,n-1\right\}\]
the $n$-length orbit space.
Due to Theorem \ref{thm8.4}, we can directly prove the following statement.
\begin{cor}
    Let $f:X\to 2^X$ be an upper semicontinuous map. If $f$ is lower semicontinuous, then 
    \begin{enumerate}
        \item the first projection map $\Pi_0:{\rm{Orb}}_f(X)\to X$ is open;
        \item $f$ has multivalued shadowing property if and only if $({\rm{Orb}}_f(X),\sigma_f)$ has shadowing property.
    \end{enumerate}
\end{cor}
\begin{proof}
  1.  In the absence of ambiguity, for each $n\in \mathbb{N}$ and $\mathcal{U}\in \mathcal{FOC}(X)$, the set 
    \[[U_1,\cdots,U_n]=\{(x_1,\cdots,x_n):x_i\in U_i, x_{i+1}\in f(x_i) \}\]
    is an open subset in ${\rm{Orb}}_f^n(X)$, and one can verify that 
    \[\Pi_0([U_1,\cdots,U_n])=U_1 \cap f^{-1}\left( U_2 \cap f^{-1}\left( U_3 \cap \cdots \cap f^{-1}(U_n) \right) \right).\]
    As $f$ is lower semicontinuous, $f^{-1}(U)$ is open whenever $U$ is open in $X$, $\Pi_0([U_1,\cdots,U_n])$ is an open subset in $X$. Since the class $\left\{[U_1,\cdots,U_n]:n\in \omega, U_i\in \text{ some }\mathcal{U}\in  \mathcal{FOC}(X)\right\}$ forms a basis in ${\rm{Orb}}^n_f(X)$, $\Pi_0$ is open.

    2. On the one hand, if $({\rm{Orb}}_f(X),\sigma_f)$ has shadowing property, from Theorem \ref{thm8.4} and 1., we conclude that $(X,f)$ has mutivalued shadowing property.

    On the other hand, suppose $f$ has shadowing property, the proof is presented in \cite{Yin}.
\end{proof}
\section*{Acknowledgement}
The author is deeply grateful to Professor X. Dai and Professor D. Dou for their help and the insightful suggestions they provided during my time at Nanjing University.

\end{document}